\providecommand\@dotsep{5}\def\listtodoname{List of Todos}\def\listoftodos{\hypersetup{linkcolor=black}\@starttoc{tdo}\listtodoname\hypersetup{linkcolor=blue}}\makeatother
\newtheorem{lemma}{Lemma}
\newtheorem{proposition}{Proposition}
\newtheorem{theorem}{Theorem}
\newtheorem{definition}{Definition}
\theoremstyle{remark}
\newtheorem{remark}{Remark}
\def\C{\mathbb C}
\def\R{\mathbb R}
\def\N{\mathbb N}
\def\U{\mathbf U}
\def\P{\mathbf P}
\def\u{\mathbf u}
\DeclareMathOperator{\id}{id}
\DeclareMathOperator{\su}{\mathfrak{su}}
\DeclareMathOperator{\SU}{SU}
\DeclareMathOperator{\GL}{GL}
\DeclareMathOperator{\Ad}{Ad}
\DeclareMathOperator{\vol}{vol}
\DeclareMathOperator{\YM}{YM}
\DeclareMathOperator{\singsupp}{singsupp}
\def \ba {\begin {eqnarray*} }
\def \ea {\end {eqnarray*} }
\def \beq {\begin {eqnarray}}
\def \eeq {\end {eqnarray}}
\def\g{\mathfrak g}
\def\gc{{\mathfrak g}_{\mathbb{C}}}
\def\h{\mathfrak h}
\def\t{\mathfrak t}
\def\p{\partial}
\DeclareMathOperator{\supp}{supp}
\newcommand{\pair}[1]{\left\langle #1 \right\rangle}
\DeclareMathOperator{\WF}{WF}
\title{Inverse problem for the Yang--Mills equations}
\author[X. Chen]{Xi Chen}
\address{ Department of Pure Mathematics and Mathematical Statistics, University of Cambridge, Cambridge CB3 0WB, UK. {\it E-mail address: \bf \tt xi.chen@dpmms.cam.ac.uk}}
\author[M. Lassas]{Matti Lassas}
\address{Department of Mathematics and Statistics, University of Helsinki. {\it E-mail address: \bf \tt Matti.Lassas@helsinki.fi}}
\author[L. Oksanen]{Lauri Oksanen}
\address{Department of Mathematics, University College London, Gower Street, London WC1E 6BT, UK. {\it E-mail address: \bf \tt l.oksanen@ucl.ac.uk}}
\author[G.P. Paternain]{Gabriel P. Paternain}
\address{ Department of Pure Mathematics and Mathematical Statistics, University of Cambridge, Cambridge CB3 0WB, UK. {\it E-mail address: \bf \tt g.p.paternain@dpmms.cam.ac.uk}}
\begin{document}
\begin{abstract} We show that a connection can be recovered up to gauge from source-to-solution type data associated with the Yang--Mills equations in Minkowski space $\R^{1+3}$. Our proof analyzes the principal symbols of waves generated by suitable nonlinear  interactions and reduces the inversion to a broken non-abelian light ray transform. The principal symbol analysis of the interaction is based on a delicate calculation that involves the structure of the Lie algebra under consideration and the final result holds for any compact Lie group.

\end{abstract}
\maketitle


\section{Introduction}

The purpose of this paper is to solve an inverse problem associated with Yang--Mills theories in Minkowski space $\R^{1+3}$. The objective is the recovery of the gauge field $A$ on a causal domain where waves can propagate and return, given data on a small observation set inside the domain.

The starting point of Yang--Mills theories is a compact Lie group $G$ with Lie algebra $\mathfrak{g}$. Without loss of generality, we shall think of $G$ as a matrix Lie group and hence $\mathfrak{g}$ will be a matrix Lie algebra.
We assume also that $G$ is connected and endowed with a bi-invariant metric, or equivalently, an inner product on $\mathfrak{g}$ invariant under the adjoint action.

In their most general formulation, Yang--Mills theories take place in the adjoint bundle of a principal bundle with structure group $G$ over space-time. Since our region of interest in space-time will be a contractible set $M\subset \R^{1+3}$, we might as well assume from the start that we are working with the trivial adjoint bundle $M\times \mathfrak{g}$.
The main object of the theory is a gauge field $A$, also known as Yang--Mills potential. In geometric language this is simply a connection $A\in C^{\infty}(M;T^*M\otimes\mathfrak{g})=\Omega^{1}(M;\mathfrak{g})$, that is, a smooth $\mathfrak{g}$-valued 1-form. In general, we denote the set of $\mathfrak{g}$-valued forms of degree $k$ by $\Omega^{k} = \Omega^{k}(M;\mathfrak{g})$.

There is a natural pairing
$[\cdot, \cdot] :
\Omega^{p}\otimes\Omega^{q}\to \Omega^{p+q}$
given in our situation as
\[[\omega,\eta]=\omega\wedge\eta-(-1)^{pq}\eta\wedge\omega,\]
where the wedge product of $\mathfrak{g}$-valued forms is understood using matrix multiplication in $\mathfrak{g}$. Using the pairing we define a covariant derivative
\[d_{A}:\Omega^{k}(M;\mathfrak{g})\to \Omega^{k+1}(M;\mathfrak{g}), \qquad d_{A}\omega=d\omega+[A,\omega].\]
Given a gauge field $A$, we can associate to it, its field strength or {\it curvature}. This is defined as
\[F_{A}:=dA+\frac{1}{2}[A,A]=dA+A\wedge A \in \Omega^{2}(M;\mathfrak{g})\]
and it always satisfies the Bianchi identity $d_{A}F_{A}=0$. Moreover, $d_A^2 \omega = [F_A, \omega]$ for any $\omega \in \Omega^k$.

\subsection{Yang--Mills equations}

The Yang--Mills equations arise as the Euler--Lagrange equations for the Yang--Mills action functional which we now recall. The inner product in $\mathfrak{g}$ naturally induces a pairing $\langle \cdot,\cdot\rangle_{\text{Ad}}$
\[\Omega^{p}(M,\mathfrak{g})\times\Omega^{q}(M,\mathfrak{g})\to \Omega^{p+q}(M).\]
If  $\star$ denotes the Hodge star operator of the Minkowski metric, the Yang-Mills functional is given by
\[S_{\YM}(A):=\frac{1}{2}\int_{M}\langle F_{A},\star F_{A}\rangle_{\text{Ad}}.\]
If $G$ is a subgroup of the unitary group, we may take as adjoint invariant inner product $-\text{trace}(XY)$, where $X,Y$ are matrices in $\mathfrak{g}$, and thus $S_{\YM}(A)$ may also be written as a constant multiple of
\[\int_{M}\text{trace}((F_{A})_{\alpha \beta}F_{A}^{\alpha \beta})\,d\text{vol},\]
as is frequently found in the physics literature. From this functional one easily derives
the Yang--Mills equations:
\begin{equation}
d_{A}^*F_{A}=0, \label{eq:1}
\end{equation}
where $d_{A}^*$ is the formal adjoint of $d_A$ and given by
 \begin{eqnarray*}d_{A}^*:\Omega^{k}(M;\mathfrak{g})\to \Omega^{k-1}(M;\mathfrak{g}),  \qquad d_{A}^*=\star d_{A}\star.\end{eqnarray*}
(In general for a Lorentzian space-time of dimension $m$, the formal ajoint acting on $k$-forms has the expression $d_{A}^*=(-1)^{m+km}\star d_{A}\star$.)

The Yang--Mills equations are gauge invariant in the sense that if
two connections $A$ and $B$ are gauge equivalent and if
$A$ satisfies (\ref{eq:1}) then also $B$ satisfies $d_{B}^*F_{B}=0$.
The connections $A$ and $B$ being gauge equivalent means that
there is a section $\U \in C^\infty(M; G)$ such that
    \begin{align}\label{gauge_equiv}
B = \U^{-1} d \U + \U^{-1} A \U.
    \end{align}
This property can be easily deduced from the fact that the action $S_{\YM}$ is gauge invariant.

\subsection{Main result}
We will consider an inverse problem for the Yang--Mills equations in the causal diamond
$$
\mathbb D
= \{ (t,x) \in \R^{1+3} : |x| \le t + 1,\ |x| \le 1 - t \}.
$$
For a fixed $0 < \epsilon_0 < 1$, the data will be given on the subset
    \begin{align}\label{def_mho}
\mho = \{(t,x) : \text{$(t,x)$ is in the interior of $\mathbb D$ and $|x| < \epsilon_0$} \}.
    \end{align}
We we say that $A \in \Omega^{1}(\mathbb D;\mathfrak{g})$ is a background connection if
it satisfies the Yang--Mills equations (\ref{eq:1}) in $\mathbb D$.
Due to the gauge invariance, the determination of a background connection on $\mathbb D$ is considered only up to the action of the following pointed gauge group
    \begin{align*}
G^0(\mathbb D,p) =  \{\U \in C^\infty(\mathbb D; G) : \U(p) = \id \},
    \end{align*}
where $p = (-1, 0) \in \overline\mho$. The reason for considering the pointed gauge group instead of the full
gauge group
 \begin{align*}
G(\mathbb D) =  C^\infty(\mathbb D; G),
    \end{align*}
is technical in nature as we shall explain below, see discussion after Lemma \ref{lem_mod_data}. Both gauge groups are clearly related by $G(\mathbb D)/G^{0}(\mathbb D,p)=G$.

For $A, B \in C^k(\mathbb D;T^*\mathbb D\otimes\mathfrak{g})$, with $k \in \N$,
we say that $A \sim B$ in $\mathbb D$ if there is $\U \in G^0(\mathbb D,p)$ such that (\ref{gauge_equiv}) holds in $\mathbb D$.
Moreover, we write
    \begin{align*}
\p^- \mathbb D = \{ (t,x) \in \mathbb D : |x| = t + 1 \}
    \end{align*}
and say that $A \sim B$ near $\p^- \mathbb D$ if there are  $\U \in G^0(\mathbb D,p)$ and a neighbourhood $\mathcal U \subset \mathbb D$ of $\p^- \mathbb D$
such that (\ref{gauge_equiv}) holds in $\mathcal U \cap \mathbb D$.
The sets $\mathbb D$, $\mho$ and $\p^- \mathbb D$ are visualized in Figure~\ref{fig_D}.

\begin{figure}
\centering
\includegraphics[width=0.5\textwidth,trim={3cm 5cm 6cm 2cm},clip]{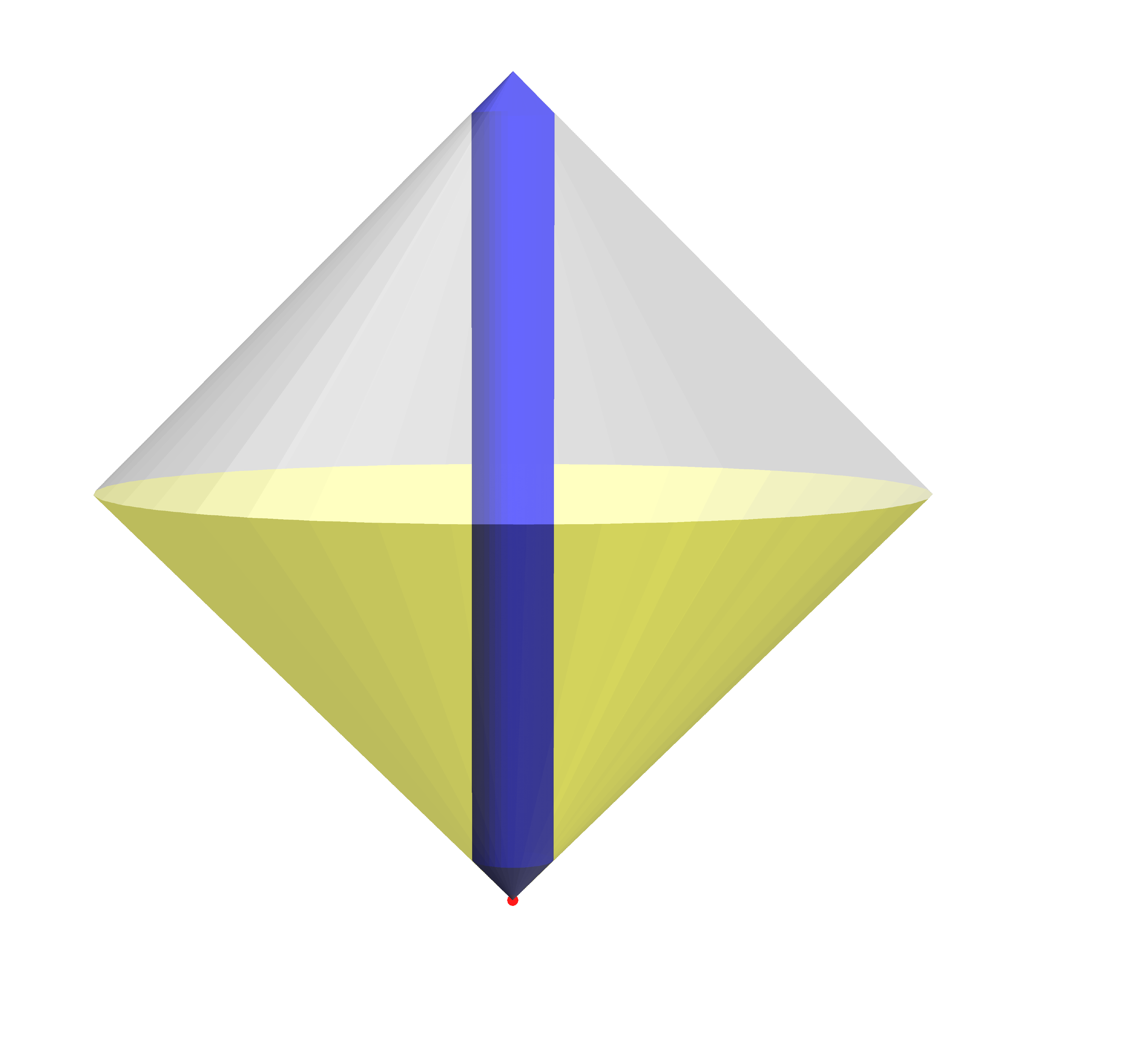}
\caption{The set $\mho$ (in blue) inside the diamond $\mathbb D$ in the $1+2$ dimensional case. The part $\p^- \mathbb D$ of the boundary of $\mathbb D$ is shaded in yellow. The point $p$ is drawn in red.}
\label{fig_D}
\end{figure}

We let $A$ be a background connection, and consider the data set
    \begin{align*}
\mathcal D_A = \{ V|_\mho :\ &\text{$V \in C^3(\mathbb D; T^* \mathbb D \otimes \mathfrak{g})$ satisfies $d_{V}^*F_{V}=0$ in $\mathbb D \setminus \mho$}
\\& \nonumber
\text{and $V \sim A$ near $\p^- \mathbb D$} \}.
    \end{align*}
Let us remark that we could consider the source-to-solution map given in Proposition~\ref{prop_source_to_sol} instead of the more abstract data set $\mathcal D_A$. We prefer to formulate our main result using $\mathcal D_A$ since the definition of the source-to-solution map is technical, requiring suitable gauge fixing among other things. In fact, it is precisely in the proof of Proposition ~\ref{prop_source_to_sol}
that the pointed gauge group is needed.
Nevertheless, intuitively, it is helpful to think of the data set as that produced by an observer creating sources $J$ supported in $\mho$ and observing solutions $V$ to $d_{V}^*F_{V}=J$
in $\mho$.

The data set $\mathcal D_A$ could also be reformulated in terms of the pairs $(J, V|_\mho)$ satisfying $d_{V}^*F_{V}=J$, with $J$ supported in $\mho$. This formulation, while being somewhat redundant as $J = d_{V}^*F_{V}$ can be computed given $V|_\mho$, suggests viewing $\mathcal D_A$ informally as the graph of the map taking $J$ to $V|_\mho$. However,
we reiterate that defining such map requires care.
In addition to gauge fixing, we need to take into account the compatibility condition $d_{V}^*J=0$ that every source must satisfy, see Lemma \ref{lem_div_J}.
Our abstract formulation of the data set $\mathcal D_{A}$ bypasses these problems while incorporating the natural gauge invariance of the theory.

We are now ready to formulate our main result.

\begin{theorem}\label{th_main}
Suppose that $A, B \in \Omega^{1}(\mathbb D;\mathfrak{g})$ solve (\ref{eq:1}) in $\mathbb D$. Then
$\mathcal D_A = \mathcal D_B$ if and only if $A \sim B$ in $\mathbb D$.
\end{theorem}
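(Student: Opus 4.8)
The plan is to prove the nontrivial direction: $\mathcal D_A = \mathcal D_B$ implies $A \sim B$ in $\mathbb D$; the converse follows from gauge invariance of the Yang--Mills equations together with the observation that the data set $\mathcal D_A$ is defined only through gauge-equivalence conditions. The overall strategy follows the now-standard "higher-order linearization" scheme for nonlinear hyperbolic inverse problems, adapted to the gauge-theoretic setting. First I would set up, via Proposition~\ref{prop_source_to_sol}, a source-to-solution map in a fixed gauge (wave gauge / Lorenz-type gauge $d_A^* (V - A) = 0$), so that the Yang--Mills operator becomes hyperbolic and small sources $J = \sum_j \varepsilon_j J_j$ supported in $\mho$ produce solutions $V = V(\varepsilon)$ depending smoothly on the parameters $\varepsilon = (\varepsilon_1,\dots,\varepsilon_N)$. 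Differentiating in $\varepsilon$ at $\varepsilon = 0$ yields linear waves $w_j = \partial_{\varepsilon_j} V|_{0}$ solving a linear gauge-fixed equation, and the mixed derivatives $\partial_{\varepsilon_1}\cdots\partial_{\varepsilon_N} V|_0$ are governed by the same linear operator with a source built from nonlinear interactions of the $w_j$'s.

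The heart of the argument is the three-fold interaction. I would take three distorted-plane-wave solutions $w_1, w_2, w_3$ with wavefront sets concentrated on null covectors $\xi_1, \xi_2, \xi_3$, chosen so that they meet at a point $q$ in $\mathbb D$ with $\xi_1 + \xi_2 + \xi_3$ null, and analyze the principal symbol of the wave produced by their quadratic/cubic interaction in the Yang--Mills nonlinearity. As the abstract indicates, this is the delicate computation: one must extract from $d_V^* F_V$ the trilinear term, commute the relevant microlocal parametrices, and show that the leading symbol of the new singularity propagating out of $q$ along $\xi_1+\xi_2+\xi_3$ encodes the value of the connection $A$ (more precisely the curvature and the Lie-bracket structure constants) at $q$. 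The output of this step should be that from $\mathcal D_A$ one can read off a \emph{broken non-abelian light ray transform} of $A$ along broken null geodesics with vertex in $\mathbb D$ and endpoints in $\mho$ --- i.e. the parallel transport of $A$ (equivalently holonomies) along such broken light rays --- in a way that only depends on the gauge-equivalence class.

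Next I would invert this broken non-abelian light ray transform. Since $\mathbb D$ is a causal diamond in Minkowski space and $\mho$ is an open observation set reaching down to $p \in \overline\mho$, the broken null geodesics with vertices filling $\mathbb D$ are rich enough: one propagates knowledge of the holonomy of $A$ from near $\p^-\mathbb D$ (where $V \sim A$ by hypothesis, fixing the gauge there) inward through $\mathbb D$. The injectivity of the non-abelian light ray transform up to gauge --- building a gauge $\U \in G^0(\mathbb D, p)$ out of parallel transport from $p$ along light rays, and checking it is smooth and well-defined using that two broken light rays between the same points give, by the interaction data, the same holonomy --- yields $B = \U^{-1} d\U + \U^{-1} A \U$ on $\mathbb D$. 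The pointed condition $\U(p) = \id$ comes for free from normalizing the parallel transport at $p$, which is exactly why the pointed gauge group appears.

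I expect the main obstacle to be the principal-symbol computation of the triple interaction: unlike the scalar case, the Yang--Mills nonlinearity mixes the form degrees (1-forms, 2-forms via $F_V$, the Hodge star, and $d_A^*$) with the Lie bracket, so one must carefully track which contractions survive at top order and verify that the surviving symbol is nondegenerate enough to recover the bracket $[\,\cdot\,,\,\cdot\,]$ and hence, after varying $\xi_1,\xi_2,\xi_3$ and $q$, the full connection class. A secondary technical point is handling the gauge-fixing consistently: one must ensure the wave-gauge reduction is solvable for the relevant sources (the compatibility condition $d_V^* J = 0$ of Lemma~\ref{lem_div_J} must be respected), and that the final reconstructed $\U$ does not depend on the auxiliary choices, so that the conclusion $A \sim B$ holds honestly in $\mathbb D$.
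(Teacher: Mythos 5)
Your overall strategy is the same as the paper's --- higher-order linearization in relative Lorenz gauge, a three-wave interaction whose principal symbol encodes parallel transport, reduction to a broken non-abelian light ray transform, and its inversion as in \cite{CLOP} --- and the points you flag as delicate (the symbol computation and the consistency of gauge fixing) are indeed the hard parts. However, as written the proposal has genuine gaps at exactly the places where the Lie-theoretic structure enters, and these are not mere details.

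First, the principal symbol of the triple interaction does not hand you the parallel transport of $A$ directly. After the cancellations coming from the Jacobi identity, what survives at leading order (taking $b_{(3)}=b_{(2)}$) is the \emph{adjoint} parallel transport of the nested commutator $[b_{(2)},[b_{(1)},b_{(2)}]]$, i.e.\ the quantity $\mathbf{S}^{A,\Ad}_{z\gets y\gets x}[b_{(2)},[b_{(1)},b_{(2)}]]$. To conclude that the broken holonomies of $A$ and $B$ agree you must show (i) that elements of the form $[X,[X,Y]]$ span $\g$, which is true precisely when $\g$ has trivial centre (Proposition~\ref{prop_nestedcommutators}), and (ii) that agreement of the transforms in the adjoint representation forces agreement in the principal representation, which again uses finiteness of $Z(G)$ (Lemma~\ref{lem_ad_id_reduction}: the ratio of the two holonomies lies in $Z(G)$ and degenerates to the identity by continuity). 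Your ``vary $\xi_j$ and $q$ to recover the full connection class'' does not substitute for this: varying the geometry does not change the fact that one only sees nested commutators.

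Second, the theorem is for \emph{any} compact connected $G$, so after the finite-centre case you still need the reduction of Section~\ref{sec_general_G}: pass to a finite cover $\mathbb T^r\times G_1$, split $A=A_Z+A_1$ along $\g=\mathfrak z\oplus\g_1$, and handle the abelian component separately. The abelian part $A_Z$ cannot be treated by the interaction argument (the brackets vanish); there the Yang--Mills equation is linear, the curvature solves $\Box F_A=0$, and Holmgren's unique continuation gives $dA_Z=dB_Z$, hence gauge equivalence since $\mathbb D$ is simply connected. Finally, your account of gauge fixing only mentions the Lorenz-type gauge. The paper actually needs the interplay of two gauges: the relative Lorenz gauge to make the forward problem hyperbolic, but the \emph{temporal} gauge to define the source-to-solution map $L$ in a way determined by $\mathcal D_A$ (Proposition~\ref{prop_source_to_sol}, via the uniqueness result Proposition~\ref{prop_tempg_uniq}). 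The symbol of the interaction must therefore be recomputed after passing to temporal gauge (the step yielding \eqref{L_Y}); omitting this leaves the relation between $L$ and the Lorenz-gauge symbols unproved.
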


Clearly if $A \sim B$ in $\mathbb D$ then $\mathcal D_A = \mathcal D_B$.
The non-trivial content of the theorem is the opposite implication.
It follows from Proposition \ref{prop_abstract_uniq_pointed} in Appendix \ref{appendix_energy} that if $A$ and $B$ are as in the theorem, then
$A \sim B$ in $\mathbb D$ if and only if $A \sim B$ near $\p^- \mathbb D$.

\subsection{Outline of the proof of Theorem \ref{th_main}} The objective is to reduce the proof of the theorem to
an inversion result for a broken non-abelian light ray transform as in \cite{CLOP}.  The broken light ray transform that arises in this paper is that related to the adjoint representation given the natural habitat of the Yang--Mills theories. In \cite{CLOP} we studied the broken light ray transform associated with the fundamental representation, so our first task is to relate the two.

To go from the data set $\mathcal D_{A}$ to the broken non-abelian light transform we follow the template laid out in \cite{CLOP} where a considerably simpler wave equation with cubic non-linearity was studied.
The first step is then to process the abstract data set and convert it into a manageable source-to-solution map
and this already brings the question of gauge fixing to the forefront.
The construction of source-to-solution map uses two types of gauges: the temporal gauge and the relative Lorenz gauge. The {\em temporal gauge} is easy to implement as it involves solving a linear matrix ODE to make the time component of a Yang--Mills potential $A$ to vanish, that is, $A_0=0$. This gauge is particularly suited to prove uniqueness results, cf. Proposition \ref{prop_tempg_uniq} below.

It is important to remark that uniqueness does really depend on the shape of the set where the connections satisfy the Yang--Mills equations. The causal diamond $\mathbb D$ has the special feature that perturbations cannot
propagate in it through the top boundary $|x|=1-t$, whereas the bottom
boundary is under control due to the assumed gauge equivalence near $\p^- \mathbb D$. In particular, even if a background connection $A$ satisfies the Yang--Mills equations on a larger set than $\mathbb D$, we do not expect to be able to recover it outside $\mathbb D$ given data on $\mho$. Moreover, it does not appear to be possible to prove Theorem \ref{th_main} using presently known unique continuation results, as discussed in more detail below.

A connection $V$ is said to be in {\it relative Lorenz gauge} with respect to the background $A$ if $d_{A}^* V = d_{A}^* A$.
The advantage of this gauge is that if $A$ satisfies Yang--Mills $d_{A}^*F_{A}=0$, and $d_{V}^*F_{V}=J$, then the difference $W=V-A$
satisfies a semilinear wave equation where the leading part is given by the connection wave operator
$\Box_{A}=d_{A}d^{*}_{A}+d_{A}^*d_{A}$, cf. (\ref{eq_YM_relative_L}). This is very helpful for solving the foward problem and for the microlocal analysis used to extract information from the source-to-solution map.

Following \cite{CLOP}, the idea is to consider the non-linear interaction of three singular waves produced by sources which are conormal distributions. We carefully track the principal symbol produced by the non-linear interaction and extract from that the non-abelian broken light ray transform. This requires a delicate calculation
unlike anything in the previous literature, in which the structure of the Lie algebra $\mathfrak{g}$ comes into consideration.  This is the technical core of the proof, and perhaps one of the most innovative aspects of the paper. After this computation, contained in Section~\ref{subsec : principal symbol}, there is one further hurdle to overcome: to use the source-to-solution map
we must revert back to the temporal gauge and check that no information is lost in the process.

\subsection{Discussion and comparison with previous literature} It is tempting to think that a result like Theorem \ref{th_main} can be obtained from a unique continuation principle. It must be stressed that unique continuation for linear wave equations with time-dependent coefficients is simply false as there are counterexamples \cite{Alinhac1983}. Although the difference of two solutions to the Yang--Mills equations in the Lorenz gauge satisfies a linear wave equation (with coefficients depending on both the solutions),
due to unique continuation failing, our inverse problem is not ``immediately solvable" and hence a different approach is needed.
We mention that an inverse problem for Yang--Mills connections on a Riemannian manifold was studied in \cite{Cekic2017}. The proofs there are based on 
unique continuation for elliptic systems, however, the elliptic case is very different from the hyperbolic one.

This paper sits firmly within the program, initiated in \cite{CLOP}, that is motivated by the
Yang--Mills--Higgs system. In addition to the Yang--Mills potential $A$, a Higgs field $\Phi\in C^{\infty}(M,\mathfrak{g})$ is present in this system. The equations for the pair of fields $(A,\Phi)$ are given by
\begin{align}
&d_{A}^*F_{A}+[\Phi,d_{A}\Phi]=0;\label{eq:1b}\\
&d_{A}^*d_{A}\Phi +V'(|\Phi|^{2})\Phi=0,\label{eq:2}
\end{align}
where $V'$ is the derivative of a smooth function $V: [0,\infty)\to\mathbb{R}$. More generally, we can consider these equations when $\Phi$ is a section of an associated bundle determined by a given representation of $G$.
The focus of \cite{CLOP}
was the recovery of $A$ via the second equation \eqref{eq:2}, when $V$ is assumed to be a quadratic potential
(the most popular choice in Yang--Mills--Higgs theories): this turns \eqref{eq:2} into a wave equation with a cubic non-linearity. The present paper focuses on the first equation \eqref{eq:1b}; more precisely in the pure
Yang--Mills case where $\Phi=0$.
There are two substantial differences between \cite{CLOP} and the present paper. First, when $A$ is fixed, the second equation (\ref{eq:2}) is no more gauge invariant, and hence the construction of source-to-solution map in \cite{CLOP} does not require gauge fixing.
Second, the quadratic potential $V$ leads to particularly simple non-linear structure in \cite{CLOP}, and the resulting analysis of principal symbols is much more straightforward than in the present paper.

As already mentioned above, we consider the non-linear interactions of three singular waves.
Interaction of singular waves has been studied outside the context of inverse problems. In particular, the wave front set of a triple cross-derivative has been studied in the case of the $1+2$-dimensional Minkowski space by Rauch and Reed \cite{Rauch1982}. The references \cite{Bony,MR1623392,Melrose1985,MR918380,Barreto2018}
have results of similar nature. 
The use of non-linear interactions in the context of inverse problems was initiated in \cite{KLU},
where the wave front set resulting from the interaction of four singular waves was studied.
The same approach was used for the Einstein equations in \cite{KLOU}, and subsequently in \cite{Lassas2017, Uhlmann2018}, in some ways the closest previous results to ours.
For a review of this approach, see \cite{Matti}.
We observed in our above mentioned work \cite{CLOP}
that it is sufficient to consider interactions of three singular waves, simplifying the analysis. Three-fold interactions are used in the present paper. 

Non-linearities allow solving inverse problems that are open for the corresponding linearized equations.
In particular, the inverse problem for the linearized Yang--Mills equation, see e.g. (\ref{eqn : 1-fold interaction}) below (where some lower order terms are discarded),
is open.
The only known results are in the case $G=U(1)$, see \cite{MR3104931, Feizmohammadi2019}, and these results impose convexity assumptions not satisfied by the geometric setting of Figure \ref{fig_D}.
The same is true for recovery zeroth order terms, solved with and without convexity assumptions for certain scalar linear \cite{MR1004174} and non-linear wave equations \cite{Feizmohammadi2019a}, respectively.

We mention that non-linear interactions
have also
been used to recover non-linear terms for scalar wave equations \cite{LUW},
scalar elliptic equations \cite{Feiz,LLLS1},
and scalar real principal type equations \cite{Oksanen2020}.
In these four works, non-linear terms do not contain any derivatives, contrary to the Einstein and Yang--Mills equations. Non-linear interactions involving derivatives have also been studied in the context of scalar wave equations \cite{MR3995093} and elastodynamics \cite{MR4055177}.
In addition, inverse problems have been studied for various non-linear equations using methods originally developed in the context of linear elliptic equations.  In particular,
the method of complex geometrical optics originating from \cite{MR873380}, and importantly extended by \cite{MR970610,MR2299741}, was first applied 
to an inverse coefficient determination problem for a non-linear parabolic equation \cite{MR1233645} and subsequently to several other inverse problems \cite{Assylbekov1,MR3964222,MR1311909,MR1295934,MR1929283,MR3023384,
MR1465069}. 

There are numerous analogies between the problem studied here and that of the Einstein equations
considered in \cite{KLOU}. For starters, both problems have gauges: in the Einstein case the gauge group is the diffeomorphism group. The role of the relative Lorenz gauge is played by wave coordinates and one could also say that
the Fermi coordinates used in \cite{KLOU} are the analogue of the temporal gauge. Both problems have a compatibility condition for the sources: the Einstein tensor has zero divergence and Yang--Mills has
$d_{A}^*d_{A}^*F_{A}=0$.

However, there are important differences and we want to stress those, since they
are essential in resolving the inverse problem in the different contexts.
After suitable gauge fixing and linearization, both the Einstein and Yang--Mills equations reduce to a linear wave equation.
The unknown Lorentzian metric appears in the leading order terms of the equation in the former case while the background gauge field $A$ features at the subprincipal level in the latter case.
The Lorentzian metric affects the Lagrangian geometry of the parametrix for the wave equation but the effect of $A$ is visible only in the principal symbol of the parametrix.
Thus the need for a symbol calculation in the present paper that takes into consideration the structure of the Lie algebra $\mathfrak{g}$.
Finally, the two inverse problems reduce to very different purely geometric problems. In our case, we read the broken non-abelian light ray transform from certain principal symbols, whereas in the Einstein case, the so-called light observation sets are obtained by analysing the wave front sets of suitable solutions, see \cite{KLU,MR4032181} for the corresponding geometric problem.

\subsection{Outline of the paper} Section \ref{sec_pt} introduces parallel transport in both the principal and the adjoint representation and reduces Theorem \ref{th_main} to inversion of the broken non-abelian light ray transform via \cite[Proposition 2]{CLOP} in the case that $G$ has finite centre. Section \ref{sec_ym}
discusses the Yang--Mills equations with a source.
Section \ref{sec_gauges} introduces the relative Lorenz gauge and the temporal gauge, thus setting up the scence for the source-to-solution map. The latter is discussed in Section \ref{sec_sts} where the important
Proposition \ref{prop_source_to_sol} is proved. Section \ref{sec_tcd} computes the equations for the triple cross-derivative when three sources are introduced. Section \ref{sec_muloc} supplies the necessary tools from microlocal analysis needed to compute the symbol of the triple interaction and the latter is computed in Section \ref{sec_proof}.
Section \ref{sec_trivial_centre} proves a result about the structure of Lie algebras with trivial centre, and completes the proof of Theorem \ref{th_main} in the case that $G$ has finite centre. The final Section \ref{sec_general_G} contains the proof of Theorem \ref{th_main} in the general case.

There are three appendices, first of which derives explicit formulas in coordinates, for example, for $d_A^* F_A$. The second appendix discusses the direct problem for the Yang--Mills equations, and the last one gives an elementary alternative to the result in Section \ref{sec_trivial_centre} in the case that $\g = \su(n)$ with $n \ge 2$.

\bigskip

\noindent {\bf Acknowledgements.}  ML was supported by Academy of Finland grants
320113 and 312119.
LO was supported by EPSRC grants EP/P01593X/1 and EP/R002207/1,	
XC and GPP were supported by EPSRC grant EP/R001898/1, and XC was supported by NSFC grant 11701094. 
LO thanks Matthew Towers for discussions of Lie algebras.

\section{Parallel transport}
\label{sec_pt}

We will explain in Section \ref{sec_general_G} how the case of an arbitrary compact, connected Lie group $G$ can be reduced to the case that $G$ has finite centre, that is, the set
    \begin{align*}
Z(G) = \{z \in G : \text{$zh = hz$ for all $h \in G$}\}
    \end{align*}
is finite. In this case, the proof of Theorem \ref{th_main} will ultimately boil down to inversion of a non-abelian broken light ray transform. This transform is the composition of two parallel transports, and we begin by defining the parallel transport used in the paper.

For the moment we may let $(M,g)$ be any Lorentzian manifold, and $G$ any compact matrix Lie group with Lie algebra $\mathfrak{g}$.
However, we will work with trivial bundles for simplicity.
Let $A\in \Omega^{1}(M;\mathfrak{g})$ be a connection and let us first define the parallel transport on the principal bundle $M \times G$
with respect to $A$:
the parallel transport $\U_\gamma^A$ along a
curve $\gamma:[0,T]\to M$ is given by $\U_\gamma^A = U(T)$ where $U$ is the solution of the ordinary differential equation
    \begin{align}\label{master_pt}
\begin{cases}
\dot{U}+\pair{A,\dot{\gamma}(t)}U=0, & t \in [0,T],
\\
U(0)= \id.
\end{cases}
    \end{align}
Here $\pair{\cdot,\cdot}$ is the pairing between covectors and vectors.

In general, if $\mathbb V$ is a vector space and $\rho: G\to \GL(\mathbb V)$ is a linear representation, the parallel transport on the associated vector bundle $M\times \mathbb V$ is defined by
$\P_\gamma^{A,\rho} = \rho(\U_\gamma^A)$.
Two representations will be of importance to us.
First, when $G \subset \GL(\C^n)$ and $\mathbb V = \C^n$ we have the representation given by $\rho=\id$.
In other words, $\P_\gamma^{A,\id} v = \U_\gamma^A v$ for $v \in \mathbb V$. We call this the principal representation.

Second, when $\mathbb V=\mathfrak{g}$
we have the adjoint representation $\rho = \Ad$ where
$\Ad(h)$, $h \in G$, is typically written $\Ad_h$ and defined by
$\Ad_h b = h b h^{-1}$ for $b \in \mathfrak{g}$.
We have
\[
\P_{\gamma}^{A,\Ad} b = \Ad_{\U_\gamma^A} b = \U_\gamma^A b (\U_\gamma^A)^{-1}, \quad b \in \mathfrak{g}.
\]
It is straightforward to verify that $W(t) = U(t) b U^{-1}(t)$
solves
    \begin{align}\label{adjoint_pt}
\begin{cases}
\dot{W}+ [\pair{A,\dot{\gamma}(t)}, W] =0, & t \in [0,T],
\\
W(0)= V,
\end{cases}
    \end{align}
where $U$ is the solution of (\ref{master_pt}).

When $M$ is a convex subset of Minkowski space $\mathbb{R}^{1 + 3}$ and $x, y \in M$, there is a unique geodesic  $\gamma$ from $x$ to $y$, up to reparametrization. The parallel transport $\U_\gamma^A$ does not depend on the parametrization of $\gamma$, and we write simply $\P_{y \gets x}^{A, \rho} = \P_\gamma^{A, \rho}$ in this case.

We are now ready to define the non-abelian broken light ray transforms
used in the proof of Theorem \ref{th_main}.
We write
    \begin{align}\label{def_diamonds_etc}
\mathbb L &= \{(x,y) \in {\mathbb D}^2 : \text{there is a lightlike geodesic joining $x$ and $y$}\},
\\\notag
\mathbb S^+(\mho) &= \{(x,y,z) \in {\mathbb D}^3 : (x,y), (y,z) \in \mathbb L,\ x < y < z,\ x,z \in \mho,\ y \notin \mho \},
 \end{align}
where $x<y$ means that there is a future pointing causal curve from $x$ to $y$.
(For $(x,y) \in \mathbb L$, we have $x<y$ if and only if the time coordinate of $y-x$ is strictly positive.)
Define
\begin{align*}
\mathbf{S}^{A,\rho}_{z \gets y \gets x} &= \mathbf{P}^{A,\rho}_{z \gets y} \mathbf{P}^{A,\rho}_{y \gets x}, \quad (x,y,z) \in \mathbb S^+(\mho).
    \end{align*}
We will reduce the transform $\mathbf{S}^{A,\Ad}_{z \gets y \gets x}$ to $\mathbf{S}^{A,\id}_{z \gets y \gets x}$ as follows:

\begin{lemma}\label{lem_ad_id_reduction}
Suppose that a compact, connected matrix Lie group $G$ has finite centre
and let $A, B \in \Omega^{1}(\mathbb D;\g)$.
If
$\mathbf{S}^{A,\Ad}_{z \gets y \gets x} = \mathbf{S}^{B,\Ad}_{z \gets y \gets x}$ for all $(x,y,z) \in \mathbb S^+(\mho)$
then $\mathbf{S}^{A,\id}_{z \gets y \gets x} = \mathbf{S}^{B,\id}_{z \gets y \gets x}$ for all $(x,y,z) \in \mathbb S^+(\mho)$.
\end{lemma}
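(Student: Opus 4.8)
The plan is to combine an elementary Lie-theoretic fact — that for a connected group $\Ad$ has kernel exactly the centre — with a connectedness argument that upgrades ``equal up to a central element'' to ``equal''.

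First I would record the algebraic identity $\mathbf{S}^{A,\Ad}_{z \gets y \gets x} = \Ad_{g_A}$, where
\[
g_A := \U_{z \gets y}^{A}\,\U_{y \gets x}^{A} = \mathbf{S}^{A,\id}_{z \gets y \gets x} \in G ;
\]
this follows from $\mathbf{P}^{A,\Ad}_{w \gets v} = \Ad_{\U_{w \gets v}^{A}}$ together with the fact that $\Ad \colon G \to \GL(\g)$ is a homomorphism. Likewise $\mathbf{S}^{B,\Ad}_{z \gets y \gets x} = \Ad_{g_B}$ with $g_B := \mathbf{S}^{B,\id}_{z \gets y \gets x}$. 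Since $G$ is connected, $\ker(\Ad) = Z(G)$, so the hypothesis is equivalent to
\[
c(x,y,z) := g_A\, g_B^{-1} = \mathbf{S}^{A,\id}_{z \gets y \gets x}\bigl(\mathbf{S}^{B,\id}_{z \gets y \gets x}\bigr)^{-1} \in Z(G) \quad\text{for all } (x,y,z) \in \mathbb S^+(\mho),
\]
and what we must show is exactly $c \equiv \id$.

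Next I would observe that $c$ is continuous on $\mathbb S^+(\mho)$: each of $\U_{y \gets x}^{A}, \U_{z \gets y}^{A}, \U_{y \gets x}^{B}, \U_{z \gets y}^{B}$ is the terminal value of the linear ODE (\ref{master_pt}) along the straight segment joining the relevant pair of points of the convex set $\mathbb D$, and this segment, hence the solution, depends smoothly on its endpoints. By the finite-centre hypothesis $Z(G)$ is finite, hence discrete in $G$, so the continuous map $c$ is locally constant. Granting that $\mathbb S^+(\mho)$ is connected, $c$ is then a single constant $c_0 \in Z(G)$. To identify $c_0$ with $\id$ I would evaluate $c$ on configurations that degenerate to a point of $\p\mho$. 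Take $q = (0,\epsilon_0,0,0)$; it lies in the interior of $\mathbb D$ (as $\epsilon_0 < 1$) and on $\{|x| = \epsilon_0\}$, so $q \in \p\mho$ and in particular $q \notin \mho$. For small $s, r > 0$ set $x = q - s(1,1,0,0)$ and $z = q + r(1,-1,0,0)$; a direct check shows $x, z \in \mho$, that $q - x$ and $z - q$ are future lightlike, and that $x < q < z$, so $(x,q,z) \in \mathbb S^+(\mho)$. As $s, r \to 0^{+}$ the connecting segments shrink to $q$, and since the coefficients of (\ref{master_pt}) are bounded on the compact set $\mathbb D$, all four parallel transports tend to $\id$; hence $c(x,q,z) \to \id$, and by discreteness $c(x,q,z) = \id$ for $s,r$ small. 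Therefore $c_0 = \id$, i.e.\ $\mathbf{S}^{A,\id}_{z \gets y \gets x} = \mathbf{S}^{B,\id}_{z \gets y \gets x}$ throughout $\mathbb S^+(\mho)$.

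The step I expect to be the main obstacle is the connectedness of $\mathbb S^+(\mho)$ used above. This is a purely geometric statement about broken future-lightlike rays in the causal diamond with endpoints in $\mho$ and vertex outside $\mho$. I would prove it using that both $\mathbb D$ and $\mho$ are convex, so that a ray issued from an interior point of either meets its boundary exactly once, and that the two legs of such a broken ray can be slid continuously — shortening a leg toward the vertex, or sliding an endpoint toward $\p\mho$ — so that a general configuration is path-connected within $\mathbb S^+(\mho)$ to a short one of the type constructed above; one may alternatively quote the analogous property of this broken ray transform from \cite{CLOP}. I note that full connectedness is not essential: it would suffice to know that every connected component of $\mathbb S^+(\mho)$ contains a short degenerate configuration as in the previous paragraph.
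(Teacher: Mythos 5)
Your proof is correct and follows essentially the same route as the paper's: reduce the hypothesis to $c := \mathbf{S}^{A,\id}_{z \gets y \gets x}\bigl(\mathbf{S}^{B,\id}_{z \gets y \gets x}\bigr)^{-1} \in Z(G)$ via $\ker(\Ad)=Z(G)$ for connected $G$, then exploit continuity of $c$, finiteness of $Z(G)$, and a degenerate configuration on which $c=\id$. Your treatment of the degeneration is in fact a touch more careful than the paper's -- the paper lets $y\to x$ and $z\to x$, a limit that exits $\mathbb S^+(\mho)$ since $x\in\mho$ while $y\notin\mho$ is required, whereas your choice of vertex $q\in\p\mho$ keeps $(x,q,z)\in\mathbb S^+(\mho)$ all the way -- and you correctly flag the connectedness (or component-by-component) point that the paper's one-line conclusion leaves implicit.
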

\begin{proof}
Let $(x,y,z) \in \mathbb S^+(\mho)$ and $b \in \g$.
Then $\u b = b \u$ where
    \begin{align*}
\u =(\U^B_{z \gets y} \U^B_{y \gets x})^{-1} \U^A_{z \gets y} \U^A_{y \gets x}= \U^B_{x \gets y} \U^B_{y \gets z} \U^A_{z \gets y} \U^A_{y \gets x}.
    \end{align*}
As this holds for all $b \in \g$
we see that $\u$ is in the centre $Z(G)$.
For the convenience of the reader we recall the proof of this well-known fact.
Let $h \in G$. As $G$ is connected, there is a path $H : [0,1] \to G$
satisfying $H(0) = \id$ and $H(1) = h$.
Define the path $F(t) = \u H(t) \u^{-1} H^{-1}(t)$ in $G$.
Then $F(0) = \id$ and
    \begin{align*}
\dot F = \u \dot H \u^{-1} H^{-1} - \u H \u^{-1} H^{-1} \dot H H^{-1}
= \u H H^{-1} \dot H \u^{-1} H^{-1} - \u H \u^{-1} H^{-1} \dot H H^{-1} = 0,
    \end{align*}
where we used the fact that $b = H^{-1} \dot H \in \g$ commutes with $\u^{-1}$.
We conclude that  $\u h \u^{-1} h^{-1} = F(1) = \id$.

Now $\u \in Z(G)$ depends continuously on $x$, $y$ and $z$, and $\u \to \id$ when $y \to x$ and $z \to x$.
As $Z(G)$ is finite, we have $\u = \id$, and therefore
    \begin{align*}
\U^A_{z \gets y} \U^A_{y \gets x} = \U^B_{z \gets y} \U^B_{y \gets x}.
    \end{align*}
\end{proof}

We have previously inverted the transform $\mathbf{S}^{A,\id}_{z \gets y \gets x}$ in the case of the unitary group $G = \mathrm{U}(n)$, see Proposition 2 of \cite{CLOP},
where slightly different choice of $\mho$ and $\mathbb D$ is used. However, the proof works for any matrix Lie group, and also for the present choice of $\mho$ and $\mathbb D$. Moreover, the gauge $\u$ defined in Lemma 3 of \cite{CLOP} is smooth up to $\p \mathbb D$ whenever the two connections $A$ and $B$ are smooth up to $\p \mathbb D$.

Until treating the case of an arbitrary compact, connected Lie group in Section~\ref{sec_general_G},
we will focus on proving:
\begin{proposition}\label{prop_analytic_step}
Suppose that $G$ has finite centre.
If $A$ and $B$ are as in Theorem \ref{th_main}
and if $\mathcal D_A = \mathcal D_B$, then there are $\tilde A \sim A$ and $\tilde B \sim B$ in $\mathbb D$ such that
$\mathbf{S}^{\tilde A,\Ad}_{z \gets y \gets x} = \mathbf{S}^{\tilde B,\Ad}_{z \gets y \gets x}$ for all $(x,y,z) \in \mathbb S^+(\mho)$.
\end{proposition}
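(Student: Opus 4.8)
The plan is to follow the strategy of \cite{CLOP}, adapted to the gauge-invariant Yang--Mills setting, splitting the argument into three stages: (i) turning the abstract data set into a source-to-solution map by gauge fixing, (ii) extracting from that map the principal symbol of a three-fold wave interaction, and (iii) identifying this symbol with the broken non-abelian light ray transform.

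For stage~(i), I would first pass to the \emph{temporal gauge}: solving a linear matrix ODE along the time direction yields a representative $\tilde A \sim A$ with $\tilde A_0 = 0$, and likewise $\tilde B \sim B$, and Proposition~\ref{prop_tempg_uniq} will be used to control this gauge so that replacing $A,B$ by $\tilde A,\tilde B$ costs no information. For a source $J$ supported in $\mho$ and satisfying the compatibility condition $d_V^* J = 0$ of Lemma~\ref{lem_div_J}, one then puts the solution $V$ of $d_V^* F_V = J$ into the \emph{relative Lorenz gauge} $d_{\tilde A}^* V = d_{\tilde A}^* \tilde A$, so that $W = V - \tilde A$ solves the semilinear wave equation~(\ref{eq_YM_relative_L}) whose leading part is the connection wave operator $\Box_{\tilde A} = d_{\tilde A} d_{\tilde A}^* + d_{\tilde A}^* d_{\tilde A}$. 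Proposition~\ref{prop_source_to_sol} --- in whose proof the pointed gauge group $G^0(\mathbb D,p)$ is essential --- then upgrades the hypothesis $\mathcal D_A = \mathcal D_B$ to equality of the two source-to-solution maps in these gauges.

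For stage~(ii), I would introduce a source $J = \epsilon_1 J_1 + \epsilon_2 J_2 + \epsilon_3 J_3$ with each $J_j$ a conormal distribution producing a singular wave, the three chosen --- as in \cite{CLOP} --- so that the geometry of their interaction is governed by a broken lightlike geodesic $x \to y \to z$ with $(x,y,z) \in \mathbb S^+(\mho)$; in particular $x,z \in \mho$ and $y \notin \mho$. The mixed derivative $\mathcal U = \p_{\epsilon_1}\p_{\epsilon_2}\p_{\epsilon_3} W|_{\epsilon = 0}$ solves a wave equation $\Box_{\tilde A}\mathcal U = \mathcal S$ whose right-hand side $\mathcal S$ is the explicit sum (computed in Section~\ref{sec_tcd}) of terms built from the three linear waves, their covariant derivatives and the Lie bracket coming from the non-linearity. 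Using the microlocal tools of Section~\ref{sec_muloc}, one computes the principal symbol of $\mathcal U$ at a covector lying over $z$: it is obtained by applying the parametrix of $\Box_{\tilde A}$ --- which propagates symbols along lightlike geodesics by parallel transport in the \emph{adjoint} bundle (since $W$ is $\g$-valued), i.e.\ by $\mathbf{P}^{\tilde A,\Ad}$ --- to the symbol of $\mathcal S$.

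The technical heart, and the step I expect to be the main obstacle, is the explicit evaluation of this principal symbol, carried out in Section~\ref{sec_proof}. One must show that, after all cancellations forced by the structure of $\g$, the symbol read at $z \in \mho$ still contains a term of the shape $\mathbf{P}^{\tilde A,\Ad}_{z \gets y}\,\mathbf{P}^{\tilde A,\Ad}_{y \gets x}\,\Xi$, where $\Xi$ is an explicit $\g$-valued quantity assembled from the interacting polarizations by iterated brackets, and --- crucially --- that letting the polarizations vary this reconstitutes the full operator $\mathbf{S}^{\tilde A,\Ad}_{z \gets y \gets x} = \mathbf{P}^{\tilde A,\Ad}_{z \gets y}\mathbf{P}^{\tilde A,\Ad}_{y \gets x}$. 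This is where the argument departs from the earlier literature: the cubic-potential problem of \cite{CLOP} contributes a simple scalar factor, whereas the derivative non-linearity here produces a symbol whose surviving term is a delicate Lie-algebraic contraction that must be shown not to degenerate. Granting this, the equality of source-to-solution maps forces all these symbols to agree, hence $\mathbf{S}^{\tilde A,\Ad}_{z \gets y \gets x} = \mathbf{S}^{\tilde B,\Ad}_{z \gets y \gets x}$ for every $(x,y,z) \in \mathbb S^+(\mho)$. The remaining point is bookkeeping: the microlocal analysis was performed in the relative Lorenz gauge, so one must check --- this is the ``one further hurdle'' of the outline --- that reverting to the temporal-gauge representatives $\tilde A \sim A$ and $\tilde B \sim B$ loses no information, after which the proposition follows.
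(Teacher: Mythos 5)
Your proposal tracks the paper's own route faithfully in stages (i) and (ii): pass to temporal gauge representatives $\tilde A$, $\tilde B$; extract the source-to-solution map $L$ (Proposition~\ref{prop_source_to_sol}); introduce three conormal sources tuned to a broken lightlike geodesic $x \to y \to z$; compute the principal symbol of the triple cross-derivative via the product calculus and the parametrix of $\Box_{\tilde A}$, which propagates symbols by the adjoint parallel transport $\mathbf P^{\tilde A,\Ad}$; and finally check (this is the content of Section~\ref{sec_prinsymb_gauge}) that converting back to temporal gauge at $z$ loses nothing. All of this matches the paper.

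The gap is at the final step, precisely where the hypothesis that $G$ has finite centre must enter --- a hypothesis your proposal never invokes, which is a warning sign that something is being glossed over. What the symbol computation of Section~\ref{subsec : principal symbol} actually produces (Proposition~\ref{prop_muloc}) is the quantity
\[
\mathbf{S}^{\tilde A,\Ad}_{z \gets y \gets x}\bigl[b_{(2)},[b_{(1)},b_{(2)}]\bigr],
\]
i.e.\ the operator $\mathbf{S}^{\tilde A,\Ad}_{z \gets y \gets x}$ evaluated only on \emph{nested commutators} $[X,[X,Y]]$, not on arbitrary elements of $\g$; indeed, the terms of order $s^{-1}$ in \eqref{Y123} cancel by the Jacobi identity, and what survives after taking $b_{(3)}=b_{(2)}$ and $s\to 0$ is exactly this bracket. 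Your phrase ``letting the polarizations vary this reconstitutes the full operator'' asserts, without justification, that knowing $\mathbf S^{\tilde A,\Ad}_{z\gets y\gets x}$ on the set $\{[X,[X,Y]]:X,Y\in\g\}$ determines it on all of $\g$. That assertion is false in general: if $\g$ had a nontrivial centre, the nested commutators would all lie in a proper subspace. The missing ingredient is Proposition~\ref{prop_nestedcommutators}, which shows --- using the root-space decomposition and the embedded $\su(2)$'s --- that when $\g$ has trivial centre (equivalently, $G$ compact connected with finite centre), the nested commutators $[X,[X,Y]]$ span $\g$. This is the single point at which the finite-centre hypothesis is used in the proof of Proposition~\ref{prop_analytic_step}, and without spelling it out (or citing the elementary $\su(n)$ alternative of Lemma~\ref{lem_span_sun}) your argument does not close.

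A second, smaller omission: the constraint $r\ne 0$ in \eqref{def_eta_xi} means the symbol identity is first obtained for a restricted set of triples $(x,y,z)$ and then extended to all of $\mathbb S^+(\mho)$ by continuity; you should record this limiting step explicitly since the surviving coefficient $3r/(1+a(r))$ vanishes at $r=0$.
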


Under the additional assumption that $G$ has finite centre, Theorem \ref{th_main} follows then from Proposition \ref{prop_analytic_step}, Lemma \ref{lem_ad_id_reduction}
and the proof of Proposition 2 in \cite{CLOP}.

\section{Yang--Mills equations with a source}
\label{sec_ym}

In this section we let $(M,g)$ be any oriented Lorentzian manifold, and consider the Yang--Mills equations with a source
        \begin{equation}
\label{eq_YM_source}
d^\ast_V F_V = J
\end{equation}
on $M$.
Here the source $J$ cannot be arbitrarily chosen but must obey the compatibility condition
    \begin{align}\label{comp_cond}
d^*_V J = 0
    \end{align}
due to the following well-known lemma. We give a proof for the convenience of the reader.
\begin{lemma}\label{lem_div_J}
Let $V \in C^3(M;T^*M\otimes\mathfrak{g})$.
Then $d^*_V d^*_V F_V = 0$, and the Yang--Mills equations with a source (\ref{eq_YM_source}) imply the compatibility condition (\ref{comp_cond}).
\end{lemma}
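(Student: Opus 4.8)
The plan is to prove the identity $d_V^* d_V^* F_V = 0$ purely formally, using only the two algebraic facts already recorded in the introduction: that $d_V^2 \omega = [F_V, \omega]$ for any $\mathfrak{g}$-valued form $\omega$, and the Bianchi identity $d_V F_V = 0$. The compatibility condition $d_V^* J = 0$ is then an immediate consequence: applying $d_V^*$ to $d_V^* F_V = J$ gives $d_V^* J = d_V^* d_V^* F_V = 0$.

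For the main identity I would argue by duality. Recall $d_V^* = \pm \star d_V \star$, with the sign depending only on the dimension $m$ and the form degree; on a fixed manifold these signs are harmless constants, so it suffices to understand $\star d_V \star d_V \star d_V \star$ acting on the $2$-form $F_V$. Equivalently — and more cleanly — I would use that $d_V^*$ is the formal adjoint of $d_V$ with respect to the (possibly indefinite) pairing $\langle\cdot,\cdot\rangle_{\mathrm{Ad}}$ on compactly supported forms, so that for test forms the identity $d_V^* d_V^* F_V = 0$ is dual to $d_V d_V \,\omega$ being wedge-paired against $F_V$ in the expected way. Concretely: for any compactly supported $\phi \in \Omega^0$,
\[
\langle d_V^* d_V^* F_V, \phi \rangle = \langle F_V, d_V d_V \phi \rangle = \langle F_V, [F_V, \phi] \rangle,
\]
using $d_V^2 = [F_V, \cdot\,]$. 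Now $\langle F_V, [F_V,\phi]\rangle$ vanishes pointwise: writing it out in terms of the Ad-invariant inner product on $\mathfrak{g}$, it is a sum of terms of the form $\langle (F_V)_{\alpha\beta}, [(F_V)_{\gamma\delta}, \phi] \rangle_{\mathfrak{g}}$, and by Ad-invariance $\langle X, [X, Y]\rangle_{\mathfrak{g}} = 0$ for all $X, Y \in \mathfrak{g}$ (antisymmetry of $\mathrm{ad}_X$ against a symmetric invariant form). Since this holds for all test $\phi$, we conclude $d_V^* d_V^* F_V = 0$. One should double-check that the pairing used in the integration by parts really does reproduce $\langle F_V, d_V d_V \phi\rangle$ with no stray sign obstructing the cancellation; the Leibniz rule for $d_V$ against the Ad-pairing (i.e. $d\langle\omega,\eta\rangle_{\mathrm{Ad}} = \langle d_V\omega,\eta\rangle_{\mathrm{Ad}} \pm \langle\omega, d_V\eta\rangle_{\mathrm{Ad}}$, which follows from invariance of the inner product under $\mathrm{ad}$) is the relevant tool and is standard.

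The main obstacle, such as it is, is bookkeeping rather than substance: tracking the degree- and dimension-dependent signs in $d_V^* = (-1)^{m+km}\star d_V\star$ through two applications, and making sure the integration-by-parts / adjointness argument is legitimate here even though $M$ need only be an oriented Lorentzian manifold (no completeness, and the metric is indefinite). This is handled by testing against compactly supported sections, so no boundary terms arise, and by noting that all identities are local and pointwise once phrased via the Ad-invariant inner product. An alternative, entirely coordinate-based route — expand $d_V^* F_V$ using the explicit formula derived in the first appendix and compute $d_V^*$ of that directly — is available as a fallback, but the invariant argument above is shorter and makes transparent why Ad-invariance of the inner product is exactly what is needed. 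Either way, once $d_V^* d_V^* F_V = 0$ is in hand, the implication to $(\ref{comp_cond})$ is one line.
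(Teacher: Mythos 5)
Your proof is correct, but it takes a genuinely different route from the paper's and has one imprecision worth flagging. The paper reduces $d_V^* d_V^* F_V$ to $\pm\star[F_V,\star F_V]$ and then verifies the pointwise algebraic identity $[\omega,\star\omega]=0$ for any $\omega\in\Omega^2(M;\mathfrak{g})$ directly from the wedge-product bookkeeping in Cartesian coordinates; the $\mathrm{Ad}$-invariant inner product on $\mathfrak{g}$ plays no role at all in that computation. You instead invoke the formal-adjoint property of $d_V^*$, reduce to showing that the pointwise pairing $\langle F_V,\star[F_V,\phi]\rangle_{\mathrm{Ad}}$ vanishes, and appeal to $\mathrm{Ad}$-invariance. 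The imprecision: you claim the expansion "is a sum of terms of the form $\langle (F_V)_{\alpha\beta},[(F_V)_{\gamma\delta},\phi]\rangle_{\mathfrak{g}}$" which die by $\langle X,[X,Y]\rangle_{\mathfrak{g}}=0$ --- but that identity kills only the diagonal terms $\{\alpha,\beta\}=\{\gamma,\delta\}$. The off-diagonal terms are killed because $dx^\alpha\wedge dx^\beta\wedge\star(dx^\gamma\wedge dx^\delta)=0$ unless $\{\alpha,\beta\}=\{\gamma,\delta\}$, which is exactly the observation the paper uses; so the two proofs are less independent than they first appear. A cleaner invariant packaging of your idea: the pointwise bilinear form $(X,Y)\mapsto\langle X,\star[Y,\phi]\rangle_{\mathrm{Ad}}$ on $\mathfrak{g}$-valued $2$-forms is the tensor product of the \emph{symmetric} Hodge pairing on $2$-forms with the \emph{skew-symmetric} map $\mathrm{ad}_\phi$ on $\mathfrak{g}$ (skewness being exactly $\mathrm{Ad}$-invariance), hence skew-symmetric, hence zero on the diagonal $X=Y=F_V$.

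As for what each approach buys: the paper's argument is purely local-algebraic, needs no integration or test functions, and proves the slightly stronger pointwise statement $[F_V,\star F_V]=0$; yours makes visible that skew-adjointness of $\mathrm{ad}$ with respect to the invariant inner product is the structural reason the compatibility condition holds, which is conceptually useful. The duality step is fine as you set it up --- testing against compactly supported $\mathfrak{g}$-valued functions, with no boundary terms, suffices to conclude $d_V^*d_V^*F_V=0$ since $F_V$ is $C^2$ --- and the final implication to $d_V^*J=0$ is identical in both proofs. Once you replace the "all terms vanish by $\mathrm{Ad}$-invariance" sentence with the precise diagonal/off-diagonal accounting (or the symmetric-times-skew packaging above), your argument is complete.
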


\begin{proof}
Since $d_{V}^*= \pm\star d_{V}\star$ we see that given
any $\omega\in \Omega^{k}(M;\mathfrak{g})$ we have
\[(d_{V}^*)^{2}\omega= \pm \star d_{V}\star\star d_{V}\star \omega=\pm \star d_{V}^2\star \omega=\pm\star[F_{V},\star\omega].\]
So it is enough to prove that $[F_{V},\star F_{V}]=0$. But this is a purely algebraic fact that holds for
any $\omega\in \Omega^{2}(M;\mathfrak{g})$, that is,
    \begin{align*}
[\omega,\star\omega]=0, \quad \omega\in \Omega^{2}(M;\mathfrak{g}).
    \end{align*}
This is equivalent with
    \begin{align}\label{eq_bracket_star2}
\omega\wedge\star\omega-\star\omega\wedge\omega=0.
    \end{align}
To check this, write $\omega=\omega_{ij}dx^{i}\wedge dx^{j}$ and note that
\[dx^{i}\wedge dx^{j}\wedge \star(dx^{k}\wedge dx^{l})\neq 0\]
if and only if $i=k$, $j=l$, $i \ne j$ and $k \ne l$. Thus
\[\omega\wedge\star\omega=(\omega_{ij})^{2}dx^{i}\wedge dx^{j}\wedge \star(dx^{i}\wedge dx^{j})\]
and since
\[dx^{i}\wedge dx^{j}\wedge \star(dx^{i}\wedge dx^{j})=\star(dx^{i}\wedge dx^{j})\wedge dx^{i}\wedge dx^{j}\]
$\star\omega\wedge\omega$ has the same expression and (\ref{eq_bracket_star2}) holds.
\end{proof}

The next lemma, proven again for convenience, implies that the source in (\ref{eq_YM_source}) changes to $\U^{-1} J \U$ when a gauge transformation $\U \in C^\infty(M,G)$ acts on $V$.
We use the shorthand notation $B = \U \cdot A$ for (\ref{gauge_equiv}).

\begin{lemma} $B = \U \cdot A$ implies
    \begin{align}\label{gauge_source}
d_{B}^*F_{B}=\U^{-1}d_{A}^*F_{A}\U.
    \end{align}

\end{lemma}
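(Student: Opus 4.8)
The plan is to verify the gauge-covariance relation $d_B^* F_B = \U^{-1} d_A^* F_A \U$ by reducing it to two more basic covariance statements: first, that the curvature transforms by $F_B = \U^{-1} F_A \U$, and second, that the covariant exterior derivative (and hence its formal adjoint) intertwines with the gauge action in the expected way, namely $d_B \omega = \U^{-1} d_A (\U \omega \U^{-1}) \U$ for any $\U^{-1}$-conjugated form. Once these two facts are in hand, the claim follows formally: apply them to $\omega = F_B = \U^{-1} F_A \U$ and use that $\star$ commutes with conjugation by the (pointwise constant on each fibre, i.e.\ $x$-dependent but algebra-automorphism) element $\U$, since $\star$ acts only on the form part and conjugation acts only on the $\mathfrak g$ part.

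First I would establish $F_B = \U^{-1} F_A \U$. Writing $B = \U^{-1} d\U + \U^{-1} A \U$ and plugging into $F_B = dB + B \wedge B$, one expands using $d(\U^{-1}) = -\U^{-1} (d\U) \U^{-1}$ and collects terms; the $d\U$-only contributions cancel against the cross terms in $B \wedge B$, leaving exactly $\U^{-1}(dA + A\wedge A)\U = \U^{-1} F_A \U$. This is the standard computation and I would present it compactly. Next, for the covariant derivative, I would check directly from $d_A \omega = d\omega + [A,\omega]$ that if $\eta = \U \omega \U^{-1}$ then $d_A \eta = \U (d_B \omega)\U^{-1}$; equivalently $d_B \omega = \U^{-1}(d_A(\U\omega\U^{-1}))\U$. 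Again this is a short expansion once one knows how $d$ acts on products $\U\omega\U^{-1}$ and uses $d\U^{-1} = -\U^{-1}(d\U)\U^{-1}$ together with the definition of $B$; the Leibniz rule for $d$ on $\mathfrak g$-valued forms (with appropriate signs) and the graded Jacobi-type identity for $[\cdot,\cdot]$ do the bookkeeping.

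With these, the conclusion is assembled: $d_B^* = \pm \star d_B \star$, so $d_B^* F_B = \pm \star d_B \star (\U^{-1} F_A \U) = \pm \star d_B (\U^{-1} (\star F_A) \U)$, using that $\star$ touches only the form factor; then the intertwining identity for $d_B$ turns this into $\pm \star \U^{-1} d_A(\star F_A) \U = \U^{-1} (\pm \star d_A \star F_A) \U = \U^{-1} d_A^* F_A \U$, as desired. I expect the main (though still routine) obstacle to be keeping track of signs: the graded commutator $[\cdot,\cdot]$ carries a $(-1)^{pq}$, the formula $d_A^* = (-1)^{m+km} \star d_A \star$ has a degree-dependent sign, and one must confirm these signs are consistent through the conjugation — but since conjugation by $\U$ is an algebra automorphism fixing all the form-degree data, every sign that appears on the left reappears identically on the right, so they ultimately play no role. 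Alternatively, and perhaps more cleanly, one can avoid signs entirely by invoking the gauge invariance of the action $S_{\YM}$ together with the already-noted gauge invariance of the equation $d_A^* F_A = 0$: differentiating the identity $S_{\YM}(\U \cdot V) = S_{\YM}(V)$ at a solution and comparing Euler--Lagrange expressions yields \eqref{gauge_source} directly; I would mention this as the conceptual reason while giving the computational proof for completeness.
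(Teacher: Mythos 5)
Your proof is correct and follows essentially the same route as the paper: establish $F_B = \U^{-1}F_A\U$, establish the intertwining identity $d_B\omega = \U^{-1}d_A(\U\omega\U^{-1})\U$, and then conjugate through $d_B^* = \star d_B \star$ using the fact that the Hodge star commutes with fibrewise conjugation by $\U$. The extra remarks on sign bookkeeping and the alternative variational argument are sound but not needed; the core computation matches the paper's.
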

\begin{proof}By assumption
\[B=\U^{-1}d\U+\U^{-1}A\U.\]
A direct calculation from the definitions shows that
\begin{equation}
d_{B}\omega=\U^{-1}d_{A}(\U\omega \U^{-1})\U, \quad \omega \in \Omega^p.
\label{eq:basic}
\end{equation}
Using $d^*_{A}=\star d_{A}\star$ and \eqref{eq:basic} we see that
\[d_{B}^*F_{B}=\U^{-1}d_{A}^*F_{A}\U\]
since $F_{B}=\U^{-1}F_{A}\U$.
\end{proof}

\section{Gauge fixing}
\label{sec_gauges}

Gauge fixing is a mathematical procedure for coping with redundant degrees
of freedom in field variables. Our work uses two gauges, namely the temporal gauge and the relative Lorenz gauge. While these are typical gauge choices, we will give below a self-contained presentation of certain, perhaps less commonly used, properties of these gauges.

\subsection{Temporal gauge}
In this section we write $(x^0, x^1, x^2, x^3) = (t,x) \in \R^{1+3}$ for the Cartesian coordinates.
The signature convention $(-+++)$ is chosen for the Minkowski metric.
A connection $A \in \Omega^1(M;\mathfrak{g})$, with $M \subset \R^{1+3}$, is said to be in the temporal gauge if $A_0 = 0$ where $A = A_\alpha dx^\alpha$.

For a connection $V \in \Omega^1(\mathbb D; \mathfrak g)$ we define a connection $\mathscr T(V)$ in temporal gauge by
    \begin{align}\label{temporal_U}
\mathscr T(V) = \U \cdot V,
\quad \text{where} \quad
\begin{cases}
\p_t \U = -V_0 \U,
\\
\U|_{t = \psi(x)} = \id,
\end{cases}
    \end{align}
and $\psi(x) = |x|-1$.
Observe that $\{(t,x) \in \mathbb D : t = \psi(x)\} = \p^- \mathbb D$ and $\U \in G^0(\mathbb D,p)$. Therefore $\mathscr T(V) \sim V$ in $\mathbb D$.

We shall prove the following uniqueness result:
\begin{proposition}\label{prop_tempg_uniq}
Let $A, B \in C^3(\mathbb D;T^*\mathbb D\otimes\mathfrak{g})$ solve the Yang--Mills equations (\ref{eq:1}) in the set $\mathbb D \setminus \mho$.
Suppose that $d_A^* F_A = d_B^* F_B$ in $\mho$
and that there is $\U \in C^\infty(\mathbb D; G)$ such that $A = \U \cdot B$ near $\p^- \mathbb D$ and that $\U = \id$ in $\mho$ near $\p^- \mathbb D$.
 Suppose, furthermore, that both $A$ and $B$ are in the temporal gauge.
Then $\U$ does not depend on $t$, and $A = \U \cdot B$ in $\mathbb D$.
\end{proposition}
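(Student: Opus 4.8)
The plan is to exploit the temporal gauge condition on both $A$ and $B$ to force the gauge transformation $\U$ to be $t$-independent on the part of $\mathbb D$ where $A = \U \cdot B$, and then to propagate this relation from a neighbourhood of $\p^- \mathbb D$ across all of $\mathbb D$ using the uniqueness of solutions to an ODE together with the Yang--Mills equations on $\mathbb D \setminus \mho$. First I would extract the ODE satisfied by $\U$ from the gauge-equivalence relation $A = \U \cdot B$, i.e. $A = \U^{-1} d\U + \U^{-1} B \U$. Taking the time component and using $A_0 = B_0 = 0$ gives $0 = \U^{-1} \p_t \U$, hence $\p_t \U = 0$ wherever the gauge relation holds; so on the neighbourhood $\mathcal U$ of $\p^-\mathbb D$ where $A = \U\cdot B$ we already know $\U$ is $t$-independent there. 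The real content is to upgrade ``$A = \U\cdot B$ near $\p^-\mathbb D$'' to ``$A = \U\cdot B$ in all of $\mathbb D$'', with $\U$ extended in a $t$-independent way, namely $\U(t,x) := \U(\psi(x), x)$.

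Next I would argue as follows. Define $\tilde\U(t,x) = \U(\psi(x),x)$, a $t$-independent section on $\mathbb D$ agreeing with $\U$ near $\p^-\mathbb D$ (since $\U$ is already $t$-independent there), and set $\tilde B = \tilde\U \cdot B$. Then $\tilde B$ is again in the temporal gauge: indeed $(\tilde\U\cdot B)_0 = \tilde\U^{-1}\p_t\tilde\U + \tilde\U^{-1} B_0 \tilde\U = 0$ because $\p_t\tilde\U = 0$ and $B_0 = 0$. By Lemma on gauge covariance (the lemma giving $d_B^*F_B = \U^{-1}d_A^*F_A\U$), $\tilde B$ solves the Yang--Mills equations in $\mathbb D\setminus\mho$, and $d^*_{\tilde B}F_{\tilde B} = \tilde\U^{-1}(d^*_B F_B)\tilde\U = \tilde\U^{-1}(d^*_A F_A)\tilde\U$ in $\mho$; since also $d^*_A F_A = d^*_B F_B$ in $\mho$ and $\tilde\U = \U = \id$ in $\mho$ near $\p^-\mathbb D$... here I would want to be careful, so let me instead phrase the endgame as a pure uniqueness statement: both $A$ and $\tilde B$ are connections in the temporal gauge that agree near $\p^-\mathbb D$ and have the same source $d^*(\cdot)F_{(\cdot)}$ restricted appropriately; I then invoke a uniqueness-for-the-Cauchy-problem argument in the temporal gauge. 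Concretely, in the temporal gauge the Yang--Mills equations become an evolution system in $t$ for the spatial components (the time component of $d_V^*F_V$ is a constraint, while the spatial components give $\p_t^2 V_j$ up to lower order), so $A = \tilde B$ on the domain of influence of the region where they already coincide, which — by the geometry of $\mathbb D$, in which nothing propagates in through the top boundary $|x| = 1-t$ — is all of $\mathbb D$. This gives $A = \tilde B = \tilde\U\cdot B$ in $\mathbb D$ with $\tilde\U$ $t$-independent, and renaming $\tilde\U$ as $\U$ completes the proof; the original $\U$ and $\tilde\U$ agree near $\p^-\mathbb D$, which is enough for the statement as phrased (``$\U$ does not depend on $t$'' should be read as: $\U$ may be replaced by its $t$-independent restriction to $\p^-\mathbb D$ without changing the conclusion, or the two necessarily coincide).

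The main obstacle I anticipate is the uniqueness step: making precise that, in the temporal gauge, the Yang--Mills equations with a fixed source on $\mathbb D\setminus\mho$ (together with matching data near $\p^-\mathbb D$ and in $\mho$) have a unique solution on $\mathbb D$. The temporal gauge does not render the system hyperbolic in the naive sense — $\Box$ does not appear cleanly, one has a first-order-in-time constraint plus a wave-type evolution for the spatial part, so one must check that the constraint propagates and set up the energy estimate on the nested family of truncated diamonds $\mathbb D \cap \{t \le \tau\}$. I would handle this by a standard energy argument adapted to the causal diamond, using that the lateral boundary $|x| = 1-t$ is characteristic and outgoing, so no boundary term of bad sign appears; the fact that both connections are only $C^3$ rather than smooth means I should phrase the energy estimate at the appropriate (low) regularity, or alternatively reduce to the difference $W = A - \tilde B$ and run Grönwall on $\|W\|$ in a suitable Sobolev norm. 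This is essentially the content of the energy estimates deferred to the appendix (Proposition \ref{prop_abstract_uniq_pointed}), so I would cite that where possible and only spell out the temporal-gauge-specific reduction here.
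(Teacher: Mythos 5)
Your proposal follows the paper's argument: extend $\U$ time-independently using the temporal gauge conditions (the paper observes $\p_t\U=0$ near $\p^-\mathbb D$ and then treats $\U$ as determined by its boundary values, exactly your $\tilde\U(t,x)=\U(\psi(x),x)$), set $\tilde A = \U\cdot B$ (your $\tilde B$), note that it stays in temporal gauge with $d^*_{\tilde A}F_{\tilde A}=d^*_A F_A$ in $\mho$ because $\U=\id$ there, and then finish with energy uniqueness on the causal diamond. The only correction needed is the citation for the final step: it should be the reduced Yang--Mills equations together with the pseudolinearization of Section~\ref{sec_pseudolin} and Lemma~\ref{lem_fsop} — not Proposition~\ref{prop_abstract_uniq_pointed}, which assumes $d^*_{(\cdot)}F_{(\cdot)}=0$ on all of $\mathbb D$ and does not accommodate a nonzero source in $\mho$ — and note that the spatial YM components in temporal gauge are not simply ``$\p_t^2 V_j$ up to lower order'' because of the $\p_j\p^a A_a$ term, which the paper eliminates by differentiating the constraint by $\p_j$ and the evolution equation by $\p_t$ to obtain the clean wave equation \eqref{YM_red2} for $\p_t A_j$.
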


\subsubsection{Reduced equations}

We follow a reduction given in \cite{C-B}.
Suppose that a connection $A \in \Omega^1(M;\mathfrak{g})$ is in temporal gauge and write $d_A^* F_A = J$.
For the convenience of the reader, we give a proof of the following formula, see Lemma \ref{lem_YM_coord} in Appendix \ref{appendix_computations},
    \begin{align*}
d_A^* F_A
&= \left(
\partial_\beta (\partial^\alpha A_\alpha)
-\partial^\alpha \partial_\alpha A_\beta
- [\partial^\alpha  A_\alpha, A_\beta]\right.
\\&\qquad \left.
- 2 [A^\alpha, \partial_\alpha A_\beta]
+ [A^\alpha, \partial_\beta A_\alpha]
- [A^\alpha, [A_\alpha, A_\beta]]\right) dx^\beta.
    \end{align*}
Here, and throughout the paper, indices are raised and lowered by using the Minkowski metric.
Taking $\beta = 0$ we get the constraint equation
    \begin{align}\label{constraint}
\p_0 (\partial^a A_a) + [A^a, \partial_0 A_a] = J_0,
    \end{align}
with $a=1,2,3$, and taking $\beta = j = 1,2,3$ we get
    \begin{align}\label{YM_red}
\partial_j(\partial^a A_a)
-\partial^\alpha \partial_\alpha A_j
+ \tilde N_j(A, \p_x A) = J_j.
    \end{align}
Here $\p_x A = (\p_1 A, \p_2 A, \p_3 A)$ and $\tilde N_j$ contains the terms that are of order one and zero,
    \begin{align*}
\tilde N_j(A, \p_x A) =
-[\partial^a  A_a, A_j] - 2 [A^a, \partial_a A_j] + [A^a, \partial_j A_a] - [A^a, [A_a, A_j]].
    \end{align*}
In the remainder of this section, we will use systematically Greek letters for indices over $0,1,2,3$ and Latin letters for $1,2,3$.

We differentiate (\ref{constraint}) using $\p_j$ and (\ref{YM_red}) using $\p_0$, to obtain
    \begin{align*}
&\p_j \p_0 (\partial^a A_a) = -[\p_j A^a, \partial_0 A_a] - [A^a, \p_j \partial_0 A_a] + \p_j J_0
\\
&
\partial_j \p_0 (\partial^a A_a)
-\partial^\alpha \partial_\alpha \p_0 A_j
+ \p_0 \tilde N_j(A, \p_x A) = \p_0 J_j.
    \end{align*}
Substituting the first equation to the second one gives
    \begin{align}\label{YM_red2}
\Box \p_t A_j + N_j(A, \p_x A, \p_t A, \p_x \p_t A) = \p_t J_j - \p_j J_0,
    \end{align}
where we have written
    \begin{align}\label{def_Minkowski_Box}
\Box = -\p^\alpha \p_\alpha
= \p_t^2 - \p_{x_1}^2 - \p_{x_2}^2 - \p_{x_3}^2,
    \end{align}
and
$$
N_j(A, \p_x A, \p_t A, \p_x \p_t A) =  -[\p_j A^a, \p_0 A_a] - [A^a, \p_j \p_0 A_a] + \p_0 \tilde N_j(A, \p_x A).
$$
We call (\ref{YM_red2}) the reduced Yang--Mills equations.

\subsubsection{Pseudolinearization}
\label{sec_pseudolin}

Observe that for bilinear and trilinear forms $b$ and $m$,
    \begin{align*}
b(A,A) - b(\tilde A, \tilde A)
&=
b(A-\tilde A, A) + b(\tilde A, A-\tilde A),
\\\notag
m(A,A,A) - m(\tilde A, \tilde A, \tilde A)
&=
m(A-\tilde A, A, A) + m(\tilde A, A - \tilde A, A) + m(\tilde A, \tilde A, A - \tilde A).
    \end{align*}
Hence if $A$ and $\tilde A$ satisfy (\ref{YM_red2}) with the same $J$, then the difference $A-\tilde A$ satisfies a linear equation of the form
    \begin{align}\label{psilin}
\Box \p_t (A-\tilde A) + X_1 \p_t (A - \tilde A) + X_2 (A - \tilde A) = 0
    \end{align}
where $X_j$, $j=1,2$, are first order differential operators in the $x^1, x^2$ and $x^3$ variables, with coefficients that depend on $A$ and $\tilde A$, and whence also on the $x^0$ variable.
Writing $u = A-\tilde A$, $Y_1 = -1$ and $Y_2 = 0$, the system (\ref{psilin}) is equivalent to (\ref{model}), with $f_1 = 0$ and $f_2 = 0$, studied in
Appendix \ref{appendix_energy}.

\subsubsection{Proof of Proposition \ref{prop_tempg_uniq}}

$A_0 = 0 = B_0$ implies that
$\U^{-1} \p_t \U = 0$, that is, $\p_t \U = 0$.
Due to its time-independence, $\U$ is well-defined and smooth in whole $\mathbb D$ and $\U = \id$ in $\mho$.
We define $\tilde A = \U \cdot B$ and proceed to show that $A = \tilde A$ in $\mathbb D$.

As $\tilde A$ is gauge equivalent to $B$, the Yang--Mills equations $d_{\tilde A} F_{\tilde A} = 0$ hold in $\mathbb D \setminus \mho$.
As $\U = \id$ in $\mho$, we have $\tilde A = B$ in $\mho$.
Therefore $d_{\tilde A} F_{\tilde A} = d_{A} F_{A}$ in $\mho$.
As $\U$ does not depend on $t$, we see that $\tilde A_0 = 0$.
Hence $A$ and $\tilde A$ are two solutions
to the reduced Yang--Mills equations (\ref{YM_red2}), with the same $J$,
and the difference $A - \tilde A$ satisfies (\ref{psilin}).
As they also coincide near $\p^- \mathbb D$, Lemma \ref{lem_fsop} in Appendix \ref{appendix_energy} implies that $A=\tilde A$ in $\mathbb D$.

\subsection{Relative Lorenz gauge}

For a moment we may let $(M,g)$ be any oriented Lorentzian manifold of even dimension.
Consider two connections $A$ and $V$ on $M$ solving the Yang--Mills equations without (\ref{eq:1})
and with (\ref{eq_YM_source}) a source, respectively. That is,
$d_A^* F_A = 0$ and $d_V^* F_V = J$.
We will rewrite the latter equation in terms of the difference $W = V - A$.

Directly from the definition of curvature
\[F_{V}=d(W+A)+\frac{1}{2}[W+A,W+A]=F_{A}+dW+[A,W]+[W,W]/2\]
and thus
\begin{equation}
\label{eqn : perturbed curvature} F_V = F_A + d_A W + [W, W]/2.
\end{equation}
Since $d_A^\ast = \star d_A \star$ it follows that $d_V^\ast = d_A^\ast + \star [W, \star \cdot]$. Combining this with \eqref{eqn : perturbed curvature} and $d_A^* F_A = 0$, we see that
$d_V^* F_V = J$ is equivalent with
 \begin{equation}\label{eq_YM_relative}
d_A^\ast d_A W + \star[W, \star F_A] + \mathcal N(W) = J,
\end{equation}
where the non-linear part reads
    \begin{align}\label{def_nonlin}
\mathcal N(W) = \frac{1}{2} d_A^\ast [W, W] + \star[W, \star d_A W]  + \frac{1}{2} \star[W, \star[W, W]].
    \end{align}

We say that $V \in \Omega^1(M;\mathfrak{g})$ is in the Lorenz gauge relative to a background connection $A \in \Omega^1(M;\mathfrak{g})$ if $d_A^\ast V=d_A^\ast A$.
In this case (\ref{eq_YM_relative}) is
equivalent with
\begin{equation}\label{eq_YM_relative_L}
\Box_A W + \star[W, \star F_A] + \mathcal N(W) = J,
\end{equation}where $\Box_A = d_A d^\ast_A + d^\ast_A d_A$ is the connection wave operator.

The semilinear wave equation (\ref{eq_YM_relative_L}), together with suitable initial conditions, is solvable when the source $J$ is small and smooth enough, see, for example, (the proof of) Theorem 6 in \cite{Kato1975}.
However, its solution $W$ solves the actual Yang--Mills equations (\ref{eq_YM_relative}) if and only if $d_A d_A^\ast W=0$.
Recall also that if $W$ solves (\ref{eq_YM_relative}), or equivalently (\ref{eq_YM_source}), then $J$ satisfies the compatibility condition (\ref{comp_cond}).
We will therefore study the system combining
(\ref{comp_cond}) and (\ref{eq_YM_relative_L}).
Observe that (\ref{comp_cond}) is equivalent with
    \begin{align}\label{eq_J0}
\p_t J_0 + [A_0, J_0] + [W_0, J_0] = \p^j J_j + [A^j, J_j] + [W^j, J_j],
    \end{align}
where $j=1,2,3$. This can be viewed as an ordinary differential equation for $J_0$.

We begin with an uniqueness result that is similar to Proposition \ref{prop_tempg_uniq}.
For $r > 0$ and $x \in \R^{1+3}$ we define the
rescaled and translated diamond
    \begin{align*}
\mathbb D(x,r) = \{ry + x : y \in \mathbb D \}.
    \end{align*}

\begin{lemma}\label{lem_fsop_Lorenz}
Let $r > 0$ and $x \in \R^{1+3}$ and write $\tilde{\mathbb D} = \mathbb D(x,r)$.
Let $A \in \Omega^1(\tilde{\mathbb D},\g)$ and suppose that $W_{(\ell)}, J_{(\ell)} \in C^2(\tilde{\mathbb D};T^*\tilde{\mathbb D}\otimes\mathfrak{g})$
solve
    \begin{align*}
\begin{cases}
\Box_A W + \star[W, \star F_A] + \mathcal N(W) = J,
\\
d_A^* J + \star [W, \star J] = 0,
\end{cases}
    \end{align*}
in $\tilde{\mathbb D}$ for $\ell=1,2$. Suppose, furthermore,
that $W_{(\ell)}, J_{(\ell)}$, $\ell=1,2$, vanish near $\p^- \tilde{\mathbb D}$
and that the spatial parts of $J_{(1)}$ and $J_{(2)}$ of coincide on $\tilde{\mathbb D}$, that is,
$J_{(1),j} = J_{(2),j}$ for $j=1,2,3$.
Then $W_{(1)} = W_{(2)}$ and $J_{(1)} = J_{(2)}$ in $\tilde{\mathbb D}$.
\end{lemma}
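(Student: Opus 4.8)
The plan is to reduce this to the finite-speed-of-propagation / energy uniqueness argument already used in the proof of Proposition \ref{prop_tempg_uniq}, namely Lemma \ref{lem_fsop} in Appendix \ref{appendix_energy}, applied after a pseudolinearization step. Write $W = W_{(1)} - W_{(2)}$ and $K = J_{(1)} - J_{(2)}$; by hypothesis $K_j = 0$ for $j=1,2,3$, so $K = K_0\, dx^0$ in the rescaled coordinates, and $W, K$ vanish near $\p^- \tilde{\mathbb D}$. Subtracting the two copies of the first equation and using the bilinear/trilinear identities of Section~\ref{sec_pseudolin} (here applied to $\mathcal N$, which is quadratic plus cubic in $W$, with coefficients depending on $A$ and on $W_{(1)}, W_{(2)}$), we get a linear wave equation $\Box_A W + \mathcal L_1 W = K$, where $\mathcal L_1$ is a first-order operator with coefficients continuous on $\tilde{\mathbb D}$; the zeroth-order term $\star[\,\cdot\,,\star F_A]$ and the linear-in-$W$ part of $\mathcal N$ all get absorbed into $\mathcal L_1$. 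Subtracting the two copies of the second (compatibility) equation and using $K_j \equiv 0$ gives, after expanding $d_A^*$ in coordinates as in (\ref{eq_J0}), an equation of the form $\p_t K_0 + c\, K_0 = \langle \text{coefficients}\rangle \cdot W$, i.e. a linear transport ODE in $t$ for $K_0$ driven by $W$, with $K_0$ vanishing near $\p^- \tilde{\mathbb D}$.

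Next I would organize the unknown as the pair $(W, K_0)$ and observe that the coupled system is, in the terminology of Appendix~\ref{appendix_energy}, of the form (\ref{model}): the wave part $\Box_A W = -\mathcal L_1 W + K_0\, dx^0$ has its right-hand side controlled by $(W, K_0)$ and first derivatives of $W$, and the ODE part controls $K_0$ in terms of $W$ (no derivatives of $K_0$ other than $\p_t$). This is exactly the structure — a principal-type/wave equation coupled to transport equations along $\p_t$ — for which Lemma \ref{lem_fsop} (or its proof) gives a Gronwall-type energy estimate on the truncated diamonds $\mathbb D(x,r)$, forcing $W \equiv 0$ and $K_0 \equiv 0$ throughout $\tilde{\mathbb D}$ once they vanish near $\p^- \tilde{\mathbb D}$. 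One mild point to check is that $\Box_A$ differs from the flat $\Box$ only by lower-order terms (its principal symbol is that of the scalar wave operator on each component, since $A$ enters $d_A$ only at order zero), so the characteristic cones are the Minkowski ones and the domain-of-dependence geometry of $\mathbb D(x,r)$ is the same as in Section~\ref{sec_gauges}.

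The main obstacle is bookkeeping rather than conceptual: one must verify that \emph{every} term produced by subtracting the two systems — in particular the cross terms $\star[W, \star J_{(\ell)}]$ in the compatibility equation and the cubic term in $\mathcal N$ — is either at most first order in $W$ with bounded coefficients or at most zeroth order in $K_0$, so that nothing falls outside the class handled by Lemma \ref{lem_fsop}. The term $\star[W,\star J_{(\ell)}]$ is bilinear and contributes a bounded zeroth-order coupling of $K_0$ to $W$; the term $d_A^* [W,W]/2$ in $\mathcal N$, after pseudolinearization, becomes $d_A^*[W, W_{(1)}] $-type expressions, i.e. first order in $W$ with coefficients involving $\p(W_{(\ell)})$, which are only $C^1$ — but $C^2$ regularity of $W_{(\ell)}$ is assumed, so these coefficients are $C^1$ hence continuous, which suffices for the energy estimate. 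Once this is checked, the conclusion is immediate from Lemma \ref{lem_fsop}.
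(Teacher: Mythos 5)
Your proposal is correct and follows essentially the same route as the paper: pseudolinearize the difference $(W_{(1)}-W_{(2)},\,J_{(1),0}-J_{(2),0})$ to obtain a coupled wave--transport system of the form (\ref{model}), check the coefficient regularity (coefficients of the first-order operators bounded, those of the transport part in $W^{1,\infty}$), and invoke Lemma~\ref{lem_fsop}. The only detail the paper spells out that you leave implicit is the observation that the form of system (\ref{model}) is invariant under rescaling and translation, which is what lets Lemma~\ref{lem_fsop}, stated on $\mathbb D$, be applied on $\tilde{\mathbb D}=\mathbb D(x,r)$.
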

\begin{proof}
Pseudolinearization analogous to that in Section \ref{sec_pseudolin}
shows that the difference $(W_{(1)} - W_{(2)}, J_{(1)} - J_{(2)})$
solves a system of the form (\ref{model}) in Appendix \ref{appendix_energy} with $f_1 = 0$ and $f_2 = 0$. The coefficients of this system depend on $W_{(\ell)}, J_{(\ell)}$ and they satisfy the assumptions of Lemma \ref{lem_fsop} in Appendix \ref{appendix_energy}.
Lemma \ref{lem_fsop} is formulated for $\mathbb D$ rather than for $\tilde{\mathbb D}$, however, the form of the system (\ref{model}) is invariant under a rescaling and translation. Therefore Lemma \ref{lem_fsop} holds also for $\tilde{\mathbb D}$ and we conclude by applying it.
\end{proof}

We will now turn to existence of solutions to the Yang--Mills equations. It is convenient work in the cylinder $M = (-2,2) \times \R^3$ containing the diamond $\mathbb D$, rather than in $\mathbb D$.
Let us consider again the system combining (\ref{comp_cond}) and (\ref{eq_YM_relative_L}),
    \begin{align}\label{YM_Lorenz_with_cc}
\begin{cases}
\Box_A W + \star[W, \star F_A] + \mathcal N(W) = J, & t \geq -1,
\\
d_A^* J + \star [W, \star J] = 0, & t \geq -1,
\\
W = 0,\ J = 0, & t \le -1.
\end{cases}
    \end{align}

\begin{lemma}\label{lem_Lorenz}
Let $A \in \Omega^1(M; \g)$ and suppose that $W,J \in C^3(M;T^*M\otimes\mathfrak{g})$ solve (\ref{YM_Lorenz_with_cc}). Suppose moreover that $A$ solves (\ref{eq:1}) in $\mathbb D$ and that $\supp(J_j)$, $j=1,2,3$, is contained in the interior of $\mathbb D$. Then
$W$ solves (\ref{eq_YM_relative}) in $\mathbb D$, with $J$ on the right-hand side.
\end{lemma}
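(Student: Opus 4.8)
The plan is to show that the quantity $E := d_A d_A^* W$ vanishes in $\mathbb D$, since by the discussion preceding the lemma this is exactly the condition under which the solution $W$ of the relative-Lorenz-gauge equation \eqref{eq_YM_relative_L} also solves the genuine Yang--Mills equation \eqref{eq_YM_relative}. First I would derive a linear wave equation satisfied by $E$ by applying $d_A d_A^*$ to the first equation in \eqref{YM_Lorenz_with_cc} and using the second (compatibility) equation together with $d_A^* F_A = 0$ in $\mathbb D$. Concretely: from $\Box_A W + \star[W,\star F_A] + \mathcal N(W) = J$ one applies $d_A^*$; the point is that $d_A^*$ of the left-hand side, after using $d_A^* \Box_A = \Box_A d_A^*$ (or rather $d_A^* d_A d_A^* = (\Box_A - d_A^* d_A) d_A^*$ combined with $(d_A^*)^2 = \pm \star[F_A,\star\,\cdot\,]$ from Lemma \ref{lem_div_J}), reproduces $\Box_A (d_A^* W)$ modulo zeroth-order terms in $d_A^* W$ and terms quadratic in $W$, while $d_A^*$ of the right-hand side is $d_A^* J$, which by the second equation of \eqref{YM_Lorenz_with_cc} equals $-\star[W,\star J]$ and hence is itself controlled by $W$ and $J = \Box_A W + \dots$. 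The net effect should be a closed linear wave equation $\Box_A (d_A^* W) + (\text{1st order in } d_A^* W) = 0$ in $\mathbb D$, with coefficients depending on $A$ and $W$; equivalently a first-order-in-$\p_t$ system of the type \eqref{model} handled in Appendix \ref{appendix_energy}. One then applies $d_A$ to pass from $d_A^* W$ to $E = d_A d_A^* W$, or more cleanly works with $d_A^* W$ directly and shows it vanishes, then notes $E = d_A(d_A^* W)$ vanishes a fortiori.

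The second ingredient is the \emph{initial/boundary data}: since $W = 0$ and $J = 0$ for $t \le -1$, we have $d_A^* W = 0$ near $\p^- \mathbb D$, so the linear wave equation for $d_A^* W$ has vanishing Cauchy data on (a neighbourhood of) $\p^- \mathbb D$. Here I would invoke the finite-speed-of-propagation / energy estimate packaged as Lemma \ref{lem_fsop} in Appendix \ref{appendix_energy} (applied to $\mathbb D$, as in the proof of Proposition \ref{prop_tempg_uniq}): a solution of such a system that vanishes near $\p^- \mathbb D$ vanishes throughout $\mathbb D$, because $\mathbb D$ is a causal diamond and no disturbance can enter through the top boundary $|x| = 1-t$. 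This gives $d_A^* W = 0$ in $\mathbb D$, hence $d_A d_A^* W = 0$ in $\mathbb D$, hence $W$ solves \eqref{eq_YM_relative} there. The hypothesis that $\supp(J_j)$, $j=1,2,3$, lies in the interior of $\mathbb D$ enters when one needs the compatibility relation and the derived equation to actually hold up to (or near) the relevant boundary pieces without extra boundary contributions spoiling the energy argument, and it ensures the ODE \eqref{eq_J0} for $J_0$ is consistent with the data.

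The main obstacle I anticipate is the bookkeeping in the first step: verifying that applying $d_A^*$ to \eqref{eq_YM_relative_L} and combining with the compatibility equation genuinely produces a \emph{closed} equation for $d_A^* W$ with no uncontrolled terms — in particular that the nonlinear terms $\mathcal N(W)$ and $\star[W,\star F_A]$, after hitting them with $d_A^*$, reorganize into expressions that are either first order in $d_A^* W$ or absorbed using the equations already in hand (using $d_A^* F_A = 0$, the Bianchi identity $d_A F_A = 0$, $(d_A^*)^2\omega = \pm\star[F_A,\star\omega]$, and the Jacobi identity for $[\cdot,\cdot]$). This is a computation in the spirit of the classical derivation (as in \cite{C-B}) that the Lorenz-gauge reduced system propagates the gauge condition, and once it is set up the energy estimate from Appendix \ref{appendix_energy} finishes the argument exactly as in Proposition \ref{prop_tempg_uniq} and Lemma \ref{lem_fsop_Lorenz}.
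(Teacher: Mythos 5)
Your overall strategy coincides with the paper's: set $H = d_A^* W$, note that $W$ solves (\ref{eq_YM_relative}) precisely when $d_A H = 0$, derive a linear wave equation for $H$, and invoke Lemma \ref{lem_fsop}. The paper reaches the wave equation by a tidier route that you may prefer. Writing $V = W + A$ and using that $A$ solves (\ref{eq:1}) in $\mathbb D$, the first equation of (\ref{YM_Lorenz_with_cc}) can be rewritten as $d_V^* F_V + d_A H = J$ in $\mathbb D$. Applying $d_V^*$ (rather than $d_A^*$) annihilates the first term at once by Lemma \ref{lem_div_J}, and the second equation of (\ref{YM_Lorenz_with_cc}) is exactly the statement $d_V^* J = 0$; thus one lands directly on $d_V^* d_A H = 0$ with no bookkeeping at all. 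Your plan of applying $d_A^*$ and cancelling by hand would ultimately arrive at the same identity, since $d_V^* = d_A^* + \star[W,\star\,\cdot\,]$, but the $V$-packaging makes the cancellations you worry about automatic, so I would recommend it.

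There is, however, a genuine gap in your treatment of the data near $\p^- \mathbb D$. You assert that $W = 0$, $J = 0$ for $t \le -1$ ``gives'' $d_A^* W = 0$ near $\p^- \mathbb D$, but $\p^- \mathbb D$ is the entire lower light cone $\{|x| = t+1\} \cap \mathbb D$, almost all of which lies at times $t > -1$, so this step is not immediate from the initial condition. The paper proves that $W$ vanishes near $\p^- \mathbb D$ by a separate finite-speed-of-propagation argument: choosing $r \in (0,1)$ with $\supp(J_j) \subset \mathbb D(0,r)$ and applying Lemma \ref{lem_fsop_Lorenz} on translated and rescaled diamonds $\tilde{\mathbb D}$ disjoint from $\mathbb D(0,r)$, one shows $W = 0$ outside the causal future of $\mathbb D(0,r)$, hence in a neighbourhood of $\p^- \mathbb D$. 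This is precisely where the hypothesis that $\supp(J_j)$ lies in the interior of $\mathbb D$ is used --- not, as you suggest, to suppress boundary contributions in the energy estimate or to make (\ref{eq_J0}) consistent. Only after $H = 0$ near $\p^- \mathbb D$ is established can one apply Lemma \ref{lem_fsop} to the wave equation $d_V^* d_A H = 0$ to propagate $H = 0$ through all of $\mathbb D$. Your proposal skips this intermediate propagation step.
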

\begin{proof}
The equations (\ref{eq_YM_relative})
and (\ref{eq_YM_relative_L}) differ by the term $d_A d_A^* W$ on the left-hand side.
Hence it is enough to verify that $H = 0$ in $\mathbb D$ where $H = d^*_A W$.
We write $V = W + A$.
As $A$ solves (\ref{eq:1}) in $\mathbb D$,
$d_V^* F_V$ coincides with the left-hand side of (\ref{eq_YM_relative}) in $\mathbb D$, and
the first equation in (\ref{YM_Lorenz_with_cc}), in other words (\ref{eq_YM_relative_L}), implies that
$d^*_V F_V + d_A H = J$ in $\mathbb D$.
Applying $d^*_V$ to this equation, we have using Lemma \ref{lem_div_J} and the second equation in (\ref{YM_Lorenz_with_cc}) that
$d_V^* d_A H = 0$ in $\mathbb D$. This is a linear wave equation for $H$.
We will show below that $W$ vanishes near $\p^- \mathbb D$. Hence also $H$ vanishes near $\p^- \mathbb D$, and as it satisfies the linear wave equation, it vanishes in the whole $\mathbb D$. This type of finite speed of propagation result is of course standard, and it follows also from Lemma \ref{lem_fsop}  Appendix \ref{appendix_energy}.

Let us now show that $W$ vanishes near $\p^- \mathbb D$.
There is $r \in (0,1)$ such that
$\supp(J_j) \subset \mathbb D(0,r)$
for $j=1,2,3$.
Let $\tilde{\mathbb D}$ in Lemma~\ref{lem_fsop_Lorenz} satisfy $\tilde{\mathbb D} \cap \mathbb D(0,r) = \emptyset$
and $\p^- \tilde{\mathbb D} \subset \{t < -1\}$. Lemma~\ref{lem_fsop_Lorenz} implies that $W = 0$ in $\tilde{\mathbb D}$ by comparison with the trivial solution.
By varying $\tilde{\mathbb D}$ we see that
$W$ vanishes in $\{t \le 0\} \setminus \mathbb D(0,r)$, and also near $\p\mathbb D \cap \{t=0\}$. In particular, $W$ vanishes near $\p^- \mathbb D$.
\end{proof}

\begin{remark}\label{rem_supp_J0}
As the second equation in (\ref{YM_Lorenz_with_cc})
is equivalent with
the ordinary differential equation (\ref{eq_J0}), we see that if
$\supp(J_j) \subset (0,T) \times K$, $j=1,2,3$, for some $K \subset \R^3$, then also $\supp(J_0) \subset (0,T) \times K$
for a solution of (\ref{YM_Lorenz_with_cc}).
\end{remark}

We prove the following result in Appendix \ref{appendix_energy}.

\begin{proposition}
\label{prop_direct}
Suppose that $A \in \Omega^1(M; \g)$ is bounded, together with all its derivatives, and let $k \ge 4$.
Then there is a neighbourhood $\mathcal H$ of the zero function in $H^{k+2}(M;\mathfrak g)$
such that for all $J_j \in \mathcal H$, $j=1,2,3$, there
is a unique solution
    \begin{align*}
W \in H^{k+1}(M; T^*M \otimes \mathfrak g), \quad
J_0 \in H^{k+1}(M; \mathfrak g)
    \end{align*}
of (\ref{YM_Lorenz_with_cc}) with $J=J_0 dx^0 + \dots + J_3 dx^3$.
Moreover, the map $(J_1, J_2, J_3) \mapsto (W,J_0)$
is smooth from $\mathcal H^3$ to $H^{k+1}(M; T^*M \otimes \mathfrak g \oplus \mathfrak g)$.
\end{proposition}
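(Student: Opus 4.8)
The strategy is to solve the coupled system (\ref{YM_Lorenz_with_cc}) by the implicit function theorem, regarding the spatial source components $(J_1,J_2,J_3)$ as the parameter and $(W,J_0)$ as the unknown, and to establish global uniqueness separately by a finite-speed-of-propagation argument. Since $A$ and all its derivatives are bounded on the cylinder $M=(-2,2)\times\R^3$, the estimates are global in space, and the condition $W=J=0$ for $t\le-1$ makes (\ref{YM_Lorenz_with_cc}) a forward Cauchy problem with trivial data. It is convenient to run the argument not in the plain Sobolev spaces $H^s(M)$ but in the energy spaces $E^s=\bigcap_{j=0}^s C^j\big([-2,2];H^{s-j}(\R^3)\big)$, which record slice-wise spatial regularity: the compatibility condition (\ref{eq_J0}) is an ordinary differential equation in $t$ and is most naturally handled slice by slice. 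One reads off the stated $H^{k+1}(M)$ conclusion at the end from the continuous embeddings relating the $E^s$-scale to the $H^s(M)$-scale. The hypothesis $k\ge4$ ensures that the spatial spaces entering the estimates are closed under the products occurring in the nonlinearities and supplies the needed Sobolev embeddings.

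The two linear ingredients are: an energy estimate for the connection wave operator $\Box_A+\star[\,\cdot\,,\star F_A]$ --- whose principal part is $\Box\otimes\mathrm{id}$ with coefficients built from $A$ and $F_A$ --- which, with vanishing Cauchy data on $\{t=-1\}$, gives a bounded solution operator $\mathcal L_A$ that gains one derivative on the energy scale (one may alternatively invoke the proof of Theorem~6 in \cite{Kato1975} here); and well-posedness of the linear transport equation $\p_t v+[A_0,v]=h$ integrated along the lines $x=\mathrm{const}$ with $v|_{t=-1}=0$, which by Gr\"onwall's inequality preserves the relevant regularity --- giving a solution operator $\mathcal T_A$ --- provided the data and coefficients multiply the ambient space continuously into it; note that in the full problem the coefficient in this transport equation involves $W_0$, a component of the wave unknown. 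Using $\mathcal L_A$ and $\mathcal T_A$, I recast (\ref{YM_Lorenz_with_cc}) in Duhamel form as a fixed-point problem $(W,J_0)=\mathcal F(W,J_0;J_1,J_2,J_3)$, whose first component is $\mathcal L_A\big(J-\mathcal N(W)\big)$ with $J=J_0\,dx^0+J_1\,dx^1+J_2\,dx^2+J_3\,dx^3$ and whose second component is $\mathcal T_A\big(\p^j J_j+[A^j,J_j]+[W^j,J_j]-[W_0,J_0]\big)$; set $\Phi=\mathrm{id}-\mathcal F$. Every nonlinear term here is polynomial in $(W,J_0)$ --- of the schematic type $d_A(W^2)$, $W\,d_AW$, $W^3$ in $\mathcal N(W)$, see (\ref{def_nonlin}), and quadratic in the unknowns for the coupling terms $[W^j,J_j]$ and $[W_0,J_0]$ --- and since the pertinent products of energy spaces are continuous, $\Phi$ is a $C^\infty$ map of Banach spaces with $\Phi\big((0,0),(0,0,0)\big)=0$.

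The decisive point is that $D_{(W,J_0)}\Phi$ at the trivial solution is a Banach isomorphism. The troublesome terms all drop out there: $\mathcal N$ vanishes to second order, so $D\mathcal N(0)=0$, and the coupling terms $[W^j,J_j]$ and $[W_0,J_0]$ vanish to second order at $(W,J_0)=0$, $(J_1,J_2,J_3)=0$. Hence $D_{(W,J_0)}\mathcal F$ at the base point is $(\delta W,\delta J_0)\mapsto\big(\mathcal L_A(\delta J_0\,dx^0),\,0\big)$, so $D_{(W,J_0)}\Phi$ acts by $(\delta W,\delta J_0)\mapsto\big(\delta W-\mathcal L_A(\delta J_0\,dx^0),\,\delta J_0\big)$ and is manifestly invertible; equivalently, the linearized PDE system is the block-triangular pair $\Box_A\delta W+\star[\delta W,\star F_A]=g+\delta J_0\,dx^0$, $\p_t\delta J_0+[A_0,\delta J_0]=h$, solved by first integrating the transport equation for $\delta J_0$ and then the wave equation for $\delta W$. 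The implicit function theorem then produces a neighbourhood $\mathcal H$ of the origin such that for $(J_1,J_2,J_3)\in\mathcal H^3$ there is a unique small solution $(W,J_0)$ depending smoothly on $(J_1,J_2,J_3)$, which gives the existence and the asserted smoothness of the solution map. Uniqueness among \emph{all} solutions in the stated spaces, not merely the small ones, is obtained separately: by the pseudolinearization identity of Section~\ref{sec_pseudolin}, the difference of two solutions sharing the same $(J_1,J_2,J_3)$ satisfies a linear system of the form treated by Lemma~\ref{lem_fsop} with vanishing data, and hence vanishes.

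The main obstacle I anticipate is the regularity bookkeeping around the transport equation (\ref{eq_J0}): its coefficient $[A_0+W_0,\cdot]$ contains the wave unknown $W_0$ and its right-hand side contains $\p^j J_j$, and because an ODE gains no regularity the Sobolev indices must be arranged so that $J_0$ feeds back into the wave equation without loss --- this is what forces the one-derivative gap between the hypothesis $J_j\in H^{k+2}$ and the conclusion $W,J_0\in H^{k+1}$, and it is the reason for carrying the argument out in the energy spaces $E^s$ rather than in $H^s(M)$. A secondary, routine matter is checking the multiplication estimates that make $\Phi$ smooth and its linearization invertible, which is what pins down the lower bound on $k$.
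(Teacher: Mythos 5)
Your proposal is correct and takes essentially the same route as the paper: recast \eqref{YM_Lorenz_with_cc} as a fixed point for a Duhamel map built from a linear solution operator, observe that all genuinely nonlinear terms vanish to second order at the trivial solution, and invoke the implicit function theorem; the Banach algebra property of $H^k(M)$ for $k\ge 4$ plays exactly the role you describe. The one cosmetic difference is in how the $J_0\,dx^0$ coupling is handled: the paper bundles it into the linear operator $P$ (whose inverse $\mathcal S$ then handles both the wave and transport parts together), so that $\p_u\Phi(0,0)=\id$ exactly, whereas you keep $\mathcal L_A$ and $\mathcal T_A$ separate and obtain a block-lower-triangular linearization, which is invertible for the same reason. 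Your separate global uniqueness remark (pseudolinearization plus Lemma~\ref{lem_fsop}) is a genuine addition the paper leaves implicit, and is consistent with how Lemma~\ref{lem_fsop_Lorenz} is proved; the detour through the slice-wise energy spaces $E^s$ is not actually needed, as the paper works directly in $H^k(M)$, though it is a perfectly serviceable alternative for controlling the ODE step.
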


\section{Source-to-solution map}
\label{sec_sts}

We begin with a lemma, that will be used only once, and that
highlights the difference between the pointed gauge group $G^0(\mathbb D, p)$ and the full gauge group $G(\mathbb D)$.

\begin{lemma}\label{lem_mod_data}
Suppose that $\tilde A \sim A$ near $\p^- \mathbb D$ and consider the modified data set
    \begin{align*}
\tilde{\mathcal D}_A = \{ V' \in \mathcal D_A :\ &\text{$V' = \tilde{A}$ in $\mho$ near $\p^- \mathbb D$} \}.
    \end{align*}
Let $V' \in \tilde{\mathcal D}_A$.
Then there are $\U \in G^0(\mathbb D, p)$ and
$V  \in C^3(\mathbb D; T^* \mathbb D \otimes \mathfrak{g})$
such that $V' = V|_\mho$, $V = \U \cdot \tilde{A}$ near $\p^- \mathbb D$,
and $\U = \id$ in $\mho$ near $\p^- \mathbb D$.
\end{lemma}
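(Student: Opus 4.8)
The plan is to start from a generic $V' \in \tilde{\mathcal D}_A$, lift it to some $V_0 \in C^3(\mathbb D; T^*\mathbb D \otimes \mathfrak g)$ with $V_0|_\mho = V'$, $d_{V_0}^* F_{V_0} = 0$ in $\mathbb D \setminus \mho$, and $V_0 \sim A$ near $\p^-\mathbb D$; such a lift exists by the very definition of $\mathcal D_A$, and since $V' \in \tilde{\mathcal D}_A$ we additionally have $V_0 = \tilde A$ in $\mho$ near $\p^-\mathbb D$. The issue is that the gauge $\U_0 \in G^0(\mathbb D,p)$ realizing $V_0 \sim A$ near $\p^-\mathbb D$ need not be trivial in $\mho$ near $\p^-\mathbb D$ nor compatible with $\tilde A$. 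First I would combine it with the gauge relating $\tilde A$ and $A$: since $\tilde A \sim A$ near $\p^-\mathbb D$, there is $\mathbf{W} \in G^0(\mathbb D, p)$ and a neighbourhood $\mathcal U$ of $\p^-\mathbb D$ with $\tilde A = \mathbf{W} \cdot A$ in $\mathcal U \cap \mathbb D$. Then $\U_1 := \U_0 \mathbf{W}^{-1}$ (composed in the appropriate order so that $V_0 = \U_1 \cdot \tilde A$ near $\p^-\mathbb D$) is a section of $G$ near $\p^-\mathbb D$, but a priori only defined near $\p^-\mathbb D$ and a priori not equal to $\id$ where $V_0 = \tilde A$.

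The key step is then to observe that on the overlap region $\mho$ near $\p^-\mathbb D$ we have both $V_0 = \tilde A$ and $V_0 = \U_1 \cdot \tilde A$, so $\U_1$ fixes $\tilde A$ there; I would argue that since $\mho$ near $\p^-\mathbb D$ is connected and $\U_1(p) = \id$ (as both $\U_0$ and $\mathbf{W}$ are in the pointed gauge group), $\U_1 \cdot \tilde A = \tilde A$ together with $\U_1(p)=\id$ forces $\U_1 = \id$ on that component — this is where the pointed gauge group does its job, since the stabilizer of a connection under the full gauge group is the group of covariantly constant sections, which on a connected set is determined by its value at a single point. Hence $\U_1$ equals $\id$ on $\mho$ near $\p^-\mathbb D$.

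Having arranged $\U_1 = \id$ on $\mho$ near $\p^-\mathbb D$, I would extend $\U_1$ from its neighbourhood of $\p^-\mathbb D$ to a global section $\U \in C^\infty(\mathbb D; G)$ with $\U(p) = \id$, using a cutoff/partition-of-unity argument (bumping $\U_1$ to $\id$ away from $\p^-\mathbb D$ while staying inside $G$, which is possible since $\U_1 = \id$ on part of the boundary and $G$ is path-connected so one can deform continuously, then smooth); arrange also that the cutoff leaves $\U$ equal to $\id$ on all of $\mho$ near $\p^-\mathbb D$, which is automatic if the support of the bump is kept away from that overlap. Finally I would set $V := \U \cdot \tilde A$ on the neighbourhood of $\p^-\mathbb D$ and $V := V_0$ elsewhere; these agree on the overlap by construction, so $V \in C^3(\mathbb D; T^*\mathbb D \otimes \mathfrak g)$ is well-defined, $V|_\mho = V_0|_\mho = V'$ (since $\U = \id$ on $\mho$ near $\p^-\mathbb D$ and $V = V_0$ away from $\p^-\mathbb D$), $V = \U \cdot \tilde A$ near $\p^-\mathbb D$, and $\U = \id$ in $\mho$ near $\p^-\mathbb D$, as required.

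The main obstacle I anticipate is the rigidity step: showing that a gauge transformation $\U_1$ with $\U_1(p) = \id$ that fixes the connection $\tilde A$ on a connected piece of $\mho$ adjacent to $\p^-\mathbb D$ must be the identity there. This reduces to the statement that $\U_1$ is parallel with respect to the connection induced on $M \times G$ (equivalently $d\U_1 + A\U_1 - \U_1 A = 0$, which follows by differentiating $\U_1 \cdot \tilde A = \tilde A$), hence determined by its value at any point of the connected component — but one must be careful that $p$ lies in the closure of the relevant component and that $\U_1$ extends continuously to $p$, which is why the lemma is stated with the specific point $p \in \overline\mho \cap \p^-\mathbb D$; I would handle this by taking the overlap neighbourhood to be connected and to contain $p$ in its closure, and invoking continuity of $\U_1$ up to $\p^-\mathbb D$.
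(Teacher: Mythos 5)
Your argument matches the paper's in its essential steps: lift $V'$ to a global $V_0$ using the definition of $\mathcal D_A$; compose the gauge $\U_0$ realizing $V_0 \sim A$ near $\p^-\mathbb D$ with the gauge $\mathbf W$ realizing $\tilde A \sim A$ near $\p^-\mathbb D$ to get a gauge $\U_1$ with $V_0 = \U_1 \cdot \tilde A$ near $\p^-\mathbb D$; observe that $\U_1$ stabilizes $\tilde A$ on $\mho$ near $\p^-\mathbb D$; and then use the ODE $d\U_1 = [\tilde A, \U_1]$ together with the pointing condition $\U_1(p) = \id$ to force $\U_1 = \id$ there (with the connectedness caveat you rightly flag). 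This is exactly the paper's reasoning.

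Where you diverge is in the statement ``$\U_1 \dots$ is a section of $G$ near $\p^-\mathbb D$, but a priori only defined near $\p^-\mathbb D$.'' This is a misreading of the setup. The relation $A \sim B$ near $\p^-\mathbb D$ is defined by requiring a \emph{globally defined} $\U \in G^0(\mathbb D, p) = \{\U \in C^\infty(\mathbb D; G) : \U(p) = \id\}$ such that the gauge identity $B = \U^{-1}d\U + \U^{-1}A\U$ holds merely on some neighbourhood of $\p^-\mathbb D$ — it is the \emph{equation}, not the gauge, that is local. Hence $\U_0$, $\mathbf W$, and therefore $\U_1 = \mathbf W^{-1}\U_0$ are all already smooth sections of $G$ over the whole of $\mathbb D$ with $\U_1(p) = \id$. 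Your entire final paragraph — the cutoff/partition-of-unity extension of $\U_1$, the smoothing, and the gluing of $V$ from two pieces — is therefore superfluous: one can simply take $\U := \U_1$ and $V := V_0$, which already satisfy every requirement of the lemma. The extension argument is not wrong, and with care it would produce a valid $(\U, V)$, but it solves a non-problem and introduces avoidable delicacies (e.g.\ ensuring the bumped $\U$ still agrees with $\U_1$ on a shrunken neighbourhood of $\p^-\mathbb D$ so that the two definitions of $V$ glue). I would drop it and state directly that $\U_1$ is globally defined by construction.
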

\begin{proof}
It follows immediately from the definitions of the sets $\mathcal D_A$ and $\tilde{\mathcal D}_A$ that there are $\U \in G^0(\mathbb D, p)$ and
$V  \in C^3(\mathbb D; T^* \mathbb D \otimes \mathfrak{g})$
such that  $V' = V|_\mho$, $V = \U \cdot \tilde{A}$ near $\p^- \mathbb D$, and $V = \tilde A$ in $\mho$ near $\p^- \mathbb D$.
Then $\U$ satisfies
    \begin{align}\label{stabilization}
\U \cdot \tilde A = \tilde A
    \end{align}
in $\mho$ near $\p^- \mathbb D$.
As (\ref{stabilization}) is equivalent with the differential equation
$d\U = [\tilde{A}, \U]$,
and $\U(p) = \id$, it follows that $\U = \id$ in $\mho$ near $\p^- \mathbb D$.
\end{proof}

If we used gauge equivalence with respect to $G(\mathbb D)$ in the definition $\mathcal D_A$, then (\ref{stabilization}) would still hold in a neighbourhood $\mathcal U \subset \overline\mho$ of $\p^- \mathbb D \cap \overline\mho$, however, this simply says that $\U|_{\mathcal U}$ is in the stabilizer subgroup
$\{\U \in C^\infty(\mathcal U; G) : \U \cdot \tilde A = \tilde A\}$ with respect to $\tilde A|_{\mathcal U}$.
In general, the stabilizer subgroup may be non-trivial.

Recall that the temporal gauge version $\mathscr T(V)$ of a connection $V$ is defined by (\ref{temporal_U}).
Recall, furthermore, that the system (\ref{YM_Lorenz_with_cc}) of Yang--Mills equations in relative Lorenz gauge with the compatibility condition is posed on $M = (-2,2) \times \R^3$.

\begin{proposition}\label{prop_source_to_sol}
Suppose that $A \in \Omega^{1}(\mathbb D;\mathfrak{g})$ satisfies (\ref{eq:1}) in $\mathbb D$.
Then there is a connection $\tilde{A} \in \Omega^{1}(\mathbb D; \mathfrak{g})$
such that $\tilde{A} \sim A$ in $\mathbb D$, $\tilde{A}|_\mho$ is in temporal gauge,
and the following holds:
for all $x \in \mho$ there are a neighbourhood $\mho_0 \subset \mho$
of $x$ and a neighbourhood $\mathcal H$ of
the zero function in $H_0^7(\mho_0;\mathfrak g)$
such that $\mathcal D_A$ determines $\tilde{A}|_\mho$ and the source-to-solution map
    \begin{align*}
L(J_1,J_2,J_3) = \mathscr T(V)|_\mho, \quad J_j \in \mathcal H,\ j=1,2,3,
    \end{align*}
where $V = W + \tilde{A}$ and $(W,J_0)$ is the solution of (\ref{YM_Lorenz_with_cc}) with $J=J_0 dx^0 + \dots + J_3 dx^3$ and with $A$
replaced by an arbitrary smooth, compactly supported extension of $\tilde{A}$ to $M$.
\end{proposition}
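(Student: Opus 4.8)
The plan is to obtain $\tilde A$ by temporal gauge fixing: start from $\mathscr T(A)$ in (\ref{temporal_U}), which is gauge equivalent to $A$ in $\mathbb D$ via a transformation lying in $G^0(\mathbb D,p)$, is smooth up to $\p\mathbb D$, and is in the temporal gauge throughout $\mathbb D$; composing with a time-independent spatial gauge transformation fixing the residual freedom by an intrinsic Coulomb-type normalisation near $\p^-\mathbb D$ and at $p$, one gets $\tilde A \sim A$ in $\mathbb D$ with $\tilde A|_\mho$ in temporal gauge. Fix also a smooth, compactly supported extension of $\tilde A$ to $M$. Since $\mho_0 \subset \mho$ lies in the interior of $\mathbb D = J^+(p)\cap J^-(q)$ with $q = (1,0)$, finite speed of propagation gives $J^+(\mho_0)\cap J^-(\mho) \subset \mathbb D$, so nothing constructed below depends on the extension, nor on $\tilde A$ outside $\mathbb D$.

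To recover $\tilde A|_\mho$ from $\mathcal D_A$, consider those $V' \in \mathcal D_A$ with $d_{V'}^*F_{V'} = 0$ in $\mho$: the witness $V$ then solves (\ref{eq:1}) in all of $\mathbb D$ and is gauge equivalent to $A$ near $\p^-\mathbb D$, hence in $\mathbb D$ by Proposition \ref{prop_abstract_uniq_pointed}. Among these, single out the $V'$ that are in temporal gauge on $\mho$ and satisfy the Coulomb-type normalisation near $\p^-\mathbb D$; for such a $V'$, $V$ is in temporal gauge on $\overline\mho$, so $V_0$ vanishes on the vertical segments foliating $\overline\mho$ and $\mathscr T(V) = V$ on $\mho$, while $\mathscr T(V)$ is a temporal-gauge solution of (\ref{eq:1}) on $\mathbb D$ gauge equivalent to $A$ and agreeing with $\tilde A$ near $\p^-\mathbb D$ (using Lemma \ref{lem_mod_data}). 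Proposition \ref{prop_tempg_uniq} with $J = 0$ then gives $\mathscr T(V) = \U\cdot\tilde A$ in $\mathbb D$ with $\U$ $t$-independent and $\U = \id$ in $\mho$ near $\p^-\mathbb D$; a $t$-independent $\U$ with this property equals $\id$ on $\mho$, so $V'|_\mho = \tilde A|_\mho$, and the normalisation leaves no other candidate. Thus $\mathcal D_A$ pins down $\tilde A|_\mho$.

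Next, for $x\in\mho$ pick a small diamond $\mho_0\subset\mho$ about $x$ and let $\mathcal H$ be a neighbourhood of $0$ in $H_0^7(\mho_0;\mathfrak g)$ small enough that Proposition \ref{prop_direct} (with $k=5$) applies. For $J_1,J_2,J_3\in\mathcal H$ let $(W,J_0)$ solve (\ref{YM_Lorenz_with_cc}) with background the extension of $\tilde A$ and set $V = W+\tilde A$. By Lemma \ref{lem_Lorenz}, $V|_{\mathbb D}$ solves (\ref{eq_YM_relative}) with source $J$; by the proof of that lemma $W$ vanishes near $\p^-\mathbb D$, so $V = \tilde A$ there; and by Remark \ref{rem_supp_J0} together with $\supp J_j\subset\mho_0$ the source $J$ is supported in $\mho$. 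Hence $V|_\mho\in\mathcal D_A$, indeed $V|_\mho\in\tilde{\mathcal D}_A$, and the spatial components of $d_V^*F_V$ in $\mho$ are $J_1,J_2,J_3$; this makes $L(J_1,J_2,J_3) = \mathscr T(V)|_\mho$ a well-defined map.

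The main obstacle is to read $L$ off $\mathcal D_A$. First, $\mathscr T(V)|_\mho$ is computable from $V|_\mho$ alone: integrate $\dot\U = -V_0\U$ from $\p^-\mathbb D$, where $\U = \id$ since $V = \tilde A$ is temporal there, along the vertical segments foliating $\overline\mho\subset\mathbb D$; this expresses $\mathscr T(V)|_\mho$ through $V|_\mho$ and the recovered $\tilde A|_\mho$. Second, applying the same integration to an arbitrary $V'\in\tilde{\mathcal D}_A$ whose $d_{V'}^*F_{V'}$ has spatial parts $J_1,J_2,J_3$ in $\mho$ must yield the same answer: the ambiguity in the witness of $V'|_\mho$ allowed by Lemma \ref{lem_mod_data} is a gauge $\Psi\in G^0(\mathbb D,p)$ equal to $\id$ near $\p^-\mathbb D$, and one checks the identity $\mathscr T(\Psi\cdot V') = \mathscr T(V')$ (the temporal gauge transformations of $\Psi\cdot V'$ and $V'$ differ exactly by $\Psi$), while the remaining freedom — that the spatial source in $\mho$, the Yang–Mills equations in $\mathbb D\setminus\mho$, and the normalisation near $\p^-\mathbb D$ determine the witness, in the relative Lorenz gauge on all of $\mathbb D$, up to such a $\Psi$ — is handled by Lemma \ref{lem_fsop_Lorenz} on the diamond $\mathbb D$. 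This last point, where the causal geometry of $\mathbb D$ versus $\mho$, the energy estimates of Appendix \ref{appendix_energy}, and the pointed gauge group all enter, is the heart of the proof; combining the three steps produces $\tilde A|_\mho$ and $L$ from $\mathcal D_A$, as claimed.
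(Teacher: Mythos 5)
Your overall plan (fix $\tilde A$ by temporal gauge fixing, read $\tilde A|_\mho$ off from $\mathcal D_A$, solve the Lorenz-gauge system to produce a witness, then show the answer is gauge-independent) is the right shape, and your third paragraph matches the paper's construction of the witness. But there are two genuine gaps in the argument.

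First, the "intrinsic Coulomb-type normalisation near $\p^-\mathbb D$ and at $p$" is never defined, and it is in fact unnecessary. The paper simply selects \emph{any} $\tilde A' \in \mathcal D_A$ that is in temporal gauge and satisfies $d_{\tilde A'}^* F_{\tilde A'} = 0$ in $\mho$ (e.g.\ $\mathscr T(A)|_\mho$), uses Proposition~\ref{prop_abstract_uniq_pointed} to guarantee $\tilde A \sim A$ in $\mathbb D$, and then proceeds with that fixed $\tilde A'$. Nothing forces a canonical choice; you only need $\tilde A'$ to be computable from $\mathcal D_A$, which picking an arbitrary admissible element achieves. Introducing an unspecified normalisation makes your determination of $\tilde A|_\mho$ hand-wavy where the paper's argument is short and concrete.

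Second, and more seriously, your uniqueness step is wrong: you claim the freedom in the witness is "handled by Lemma~\ref{lem_fsop_Lorenz} on the diamond $\mathbb D$." But Lemma~\ref{lem_fsop_Lorenz} is a uniqueness statement for solutions of the relative Lorenz-gauge system (\ref{YM_Lorenz_with_cc}) with a \emph{fixed} background and matching \emph{spatial} sources. A witness $V$ of an arbitrary $V'\in\tilde{\mathcal D}_A$ is not in relative Lorenz gauge; if you gauge-transform it into relative Lorenz gauge, the source transforms by conjugation $J\mapsto \U^{-1}J\U$, so the hypothesis "$J_{(1),j}=J_{(2),j}$" fails and the lemma cannot be applied to two competing witnesses. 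This is precisely why the paper instead passes to the \emph{temporal} gauge (Proposition~\ref{prop_tempg_uniq}): Lemma~\ref{lem_mod_data} gives you witnesses and gauges equal to $\id$ in $\mho$ near $\p^-\mathbb D$, applying $\mathscr T$ to both gives temporal-gauge connections $W,\tilde W$, and then one must still check that their full sources agree in $\mho$ — which requires the argument (entirely missing from your proposal) that the temporal component $J_0$ satisfies $\p_t J_0 = 0$ in $\mho_+$ because of the compatibility condition $d_W^* d_W^* F_W = 0$ together with $W_0=0$, and hence is determined by its value at $t=t_0$. Only after recovering the full source does Proposition~\ref{prop_tempg_uniq} give $W=\tilde W$ in $\mho$. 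Your proposal skips both the choice of gauge in which uniqueness is proved and the recovery of the temporal source component, so the well-definedness of $L$ is not established.
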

\begin{proof}
Let $\tilde{A}' \in \mathcal D_A$ be in the temporal gauge and satisfy $d_{\tilde{A}'}^*F_{\tilde{A}'}=0$ in $\mho$.
Such $\tilde{A}'$ exists, for example, $\tilde{A}' = \mathscr T(A)|_\mho$ is a possible choice.
There is $\tilde{A}$ such that $\tilde{A}' = \tilde{A}|_\mho$, $d_{\tilde{A}}^*F_{\tilde{A}}=0$ in $\mathbb D$ and $\tilde{A} \sim A$ near $\p^- \mathbb D$.
Proposition \ref{prop_abstract_uniq_pointed} in Appendix \ref{appendix_energy} implies that $\tilde{A} \sim A$ in $\mathbb D$.
Choose a smooth, compactly supported extension of $\tilde A$ in $M$, still denoted by $\tilde A$.

For $x \in \mho$ we choose $\epsilon > 0$ small enough so that $\mathbb D(x,\epsilon) \subset \mho$ and let $\mho_0$ be the interior of $\mathbb D(x,\epsilon)$.
Let $t_0$ be the time coordinate of $x$.
Let $J_j \in H^7_0(\mho_0; \mathfrak g)$, $j=1,2,3$, be small, and
consider the solution $(W,J_0)$ of the system (\ref{YM_Lorenz_with_cc})
with $A = \tilde{A}$ in $(-1,t_0) \times \R^3$.
This solution vanishes outside $\mho_0$ and near $\p^- \mathbb D(x,\epsilon)$,
and it does not depend on $\tilde{A}$ away from $\mho_0$.
The vanishing of $(W,J_0)$ outside $\mho_0$  and near $\p^- \mathbb D(x,\epsilon)$ is shown similarly to the vanishing of $W$ near $\p^- \mathbb D$ in the proof of Lemma~\ref{lem_Lorenz}, and we omit this argument.
To see that $(W,J_0)$ does not depend on $\tilde{A}$ away from $\mho_0$, we consider two solutions to (\ref{YM_Lorenz_with_cc})
with different backgrounds $A$ in $(-1,t_0 + \epsilon) \times \R^3$.
Both the backgrounds are assumed to coincide with $\tilde A$ in $\mho_0$.
As both the solutions vanish near $\p^- \mathbb D(x,\epsilon)$, Lemma \ref{lem_fsop_Lorenz} implies that they are identical in $\mathbb D(x,\epsilon)$.

Extending $(W,J_0)$ by zero we get a solution in the set $\mho_- = \mho \cap \{t < t_0\}$.
To summarize, the solution $(W,J_0)$ in $\mho_-$ is determined by $\tilde{A}'$ and our choice of $J_j$, $j=1,2,3$.
Defining a connection $\hat V = \hat V(J_1, J_2, J_3)$ on $\mho_-$
by $\hat V = W + \tilde{A}$ we have $d_{\hat V}^* F_{\hat V} = J$ in $\mho_-$ where $J = J_0 dx^0 + \dots +J_3 dx^3$.
We write $\mho_+ = \mho \cap \{t > t_0\}$, and consider the set
    \begin{align*}
\mathcal L = \mathcal L(J_1,J_2,J_3) = \{ \mathscr T(V') &: \text{$V' \in \tilde{\mathcal D}_A$, $V' = \hat V$ in $\mho_-$,}
\\&\text{and the spatial part of $d_{V'}^* F_{V'}$ vanishes in $\mho_+$} \}.
    \end{align*}
Here $\mathscr T$ is defined by (\ref{temporal_U}) with $|x| < \epsilon_0$, cf. (\ref{def_mho}). No confusion should arise from our use of $\mathscr T$ for temporal gauge both in $\mho$ and in $\mathbb D$ since $\mathscr T(V|_\mho) = \mathscr T(V)|_\mho$ for a connection $V$ on $\mathbb D$.

As $\hat V$ is determined by $\mathcal D_A$ (and the choice of $\tilde A'$),
also $\mathcal L$ is determined by $\mathcal D_A$.
Moreover, $\mathscr T(V)|_\mho \in \mathcal L$
where $V = W + \tilde{A}$ and $(W,J_0)$ is the solution of (\ref{YM_Lorenz_with_cc}) in $M$ with $J_j$, $j=1,2,3$, as above and $A=\tilde{A}$.
The solution $(W,J_0)$ in $M$ is an extension of the solution $(W,J_0)$ in $(0,t_0) \times \R^3$, which justifies our reuse of symbols.
Observe that Proposition \ref{prop_direct}, together with the Sobolev embedding theorem, guarantees that $W \in C^3(\mathbb D; T^* \mathbb D \otimes \mathfrak{g})$, and that Remark \ref{rem_supp_J0} guarantees that $\supp(J_0) \subset \mho$.

To conclude the proof, it remains to show that $\mathcal L$ consists of a single element.
Suppose that $W', \tilde W' \in \mathcal L$.
By Lemma \ref{lem_mod_data} there are connections $V$, $\tilde V$
and gauges $\u$, $\tilde \u$
satisfying
$W' = \mathscr T(V)|_\mho$, $\tilde W' = \mathscr T(\tilde V)|_\mho$,
$d_{V}^*F_{V}=0 = d_{\tilde V}^*F_{\tilde V}$ in $\mathbb D \setminus \mho$, $V = \u \cdot\tilde{A}$ and $\tilde V = \tilde \u \cdot \tilde A$ near $\p^- \mathbb D$, and $\u = \id = \tilde \u$ in $\mho$ near $\p^- \mathbb D$.
We define
    \begin{align*}
\begin{cases}
\p_t \U = -V_0 \U,
\\
\U|_{t = \psi(|x|)} = \id,
\end{cases}
\quad
\begin{cases}
\p_t \tilde \U = -\tilde V_0 \tilde \U,
\\
\tilde \U|_{t = \psi(|x|)} = \id,
\end{cases}
    \end{align*}
and set $W = \U\cdot V$ and $\tilde W = \tilde \U \cdot \tilde V$.
Then $W_0 = 0 = \tilde W_0$ in $\mathbb D$.
Moreover, it follows from the definition of  $\mathscr T$
that $W' = W|_{\mho}$ and $\tilde W' = \tilde W|_{\mho}$.

There holds $V = \tilde A = \tilde V$ in $\mho$ near $\p^- \mathbb D$. This implies $\U = \tilde \U$ and $W = \tilde W$ in $\mho$ near $\p^- \mathbb D$.
Writing $\U_- = \U\u\tilde \u^{-1}\tilde \U^{-1}$, we have that $W = \U_- \cdot \tilde W$ near $\p^- \mathbb D$ and $\U_-= \id$ in $\mho$ near $\p^- \mathbb D$.

In fact, as $V = \hat V = \tilde V$ in $\mho_-$, we have $\U = \tilde \U$ and $W = \tilde W$ in $\mho_-$.
Hence also $d_{W}^* F_W = d_{\tilde W}^* F_{\tilde W}$ in $\mho_-$.
The spatial parts of $d_{V}^* F_V$ and $d_{\tilde V}^* F_{\tilde V}$
vanish in $\mho_+$. As gauge transformations act componentwise on $d_{W}^* F_W$, see (\ref{gauge_source}),
also the spatial parts of $d_{W}^* F_W$ and $d_{\tilde W}^* F_{\tilde W}$
vanish in $\mho_+$.
Writing $J_0$ for the temporal part of
$d_{W}^* F_W$,
the compatibility condition $d_W^* d_{W}^* F_W = 0$, see Lemma \ref{lem_div_J}, together with $W_0 = 0$, implies that $\p_t J_0 = 0$ in $\mho_+$.
The same holds for $\tilde J_0$, the temporal part of $d_{\tilde W}^* F_{\tilde W}$. But $J_0 =\tilde J_0$ on $\mho \cap \{t=t_0\}$, and hence $J_0 =\tilde J_0$ in $\mho_+$. To summarize $d_{W}^* F_W = d_{\tilde W}^* F_{\tilde W}$ in $\mho$.
Proposition \ref{prop_tempg_uniq} implies that $W = \tilde W$ in $\mho$. In other words $W' = \tilde W'$ and this is the only element in $\mathcal L$.
\end{proof}

\section{Linearization of the Yang--Mills equations in Lorenz gauge}
\label{sec_tcd}

Let us study multiple-fold linearizations of  (\ref{eq_YM_relative_L}).
Consider a three-parameter family
    \begin{align*}
(W,J) = (W(\epsilon), J(\epsilon)), \quad \epsilon = (\epsilon_{(1)}, \epsilon_{(2)}, \epsilon_{(3)}),
    \end{align*}
of solutions to (\ref{eq_YM_relative_L}), vanishing for $t \le 0$, where $\epsilon$ is in a neighbourhood of the origin in $\R^3$.
Assume that the source term is linear in the sense that $J = \sum_{k = 1}^3 \epsilon_{(k)} J_{(k)}$ for some $J_{(k)} \in \Omega^1(\R^{1+3}; \mathfrak g)$.
Writing
    \begin{align}\label{def_Y}
Y_{(k)} = \frac{\partial W}{\partial \epsilon_{(k)}}\bigg|_{\epsilon = 0},
\quad
Y_{(kl)} = \frac{\partial^2 W}{\partial \epsilon_{(k)}\partial \epsilon_{(l)}}\bigg|_{\epsilon = 0},
\quad
Y_{(123)} = \frac{\partial^3 W}{\partial \epsilon_{(1)}\partial \epsilon_{(2)}\partial \epsilon_{(3)}}\bigg|_{\epsilon = 0},
    \end{align}
and differentiating
\eqref{eq_YM_relative_L} in $\epsilon$ gives the following system of linear wave equations
\begin{equation}\label{eqn : linearized Y-M}
\begin{cases}
\Box_A Y_{(k)}     + \star [Y_{(k)}, \star F_A]  = J_{(k)}, & t \geq 0,
\\
\Box_A Y_{(kl)} + \star [Y_{(kl)}, \star F_A] + N(2)   = 0, & t \geq 0,
\\
\Box_A Y_{(123)} + \star [Y_{(123)}, \star F_A] + N(3) = 0, & t \geq 0,
\\
Y_{(k)} = Y_{(kl)} = Y_{(123)} = 0, & t \leq 0,
\end{cases}
\end{equation}
where the nonlinear terms read
    \begin{align*}
N(2) &= \frac{1}{2}d_A^\ast [Y_{(k)}, Y_{(l)}] + \frac{1}{2}d_A^\ast [Y_{(l)}, Y_{(k)}] + \star [Y_{(k)}, \star d_A Y_{(l)} ] + \star [Y_{(l)}, \star d_A Y_{(k)} ],
    \end{align*}
and, writing $S_3$ for the set of permutations on $\{1,2,3\}$,
    \begin{align*}
N(3) &= \frac{1}{2} \sum_{\pi\in S_3} \bigg( \frac{1}{2}d_A^\ast [Y_{(\pi(1)\pi(2))}, Y_{(\pi(3))}] + \frac{1}{2}d_A^\ast [Y_{(\pi(1))}, Y_{(\pi(2)\pi(3))}]
\\& \qquad\quad
+  \star [Y_{(\pi(1)\pi(2))}, \star d_A Y_{(\pi(3))} ]    +  \star [Y_{(\pi(1))}, \star d_A Y_{(\pi(2)\pi(3))} ]
\\ & \qquad\quad
+ 2 \star [Y_{(\pi(1))}, \star [Y_{(\pi(2))}, Y_{(\pi(3))}]]\bigg).
    \end{align*}

Now we continue the calculation in Cartesian coordinates in Minkowski space $\R^{1+3}$, and use the formulas
    \begin{align}\label{eqn : dastarliebracket}
d_A^\ast [X, Z] &= [d_A^* X, Z]-[X, d_A^* Z]
\\\notag&\qquad+
[\p^\alpha X_\beta + [A^\alpha, X_\beta], Z_\alpha]  dx^\beta - [X_\alpha, \p^\alpha Z_\beta + [A^\alpha, Z_\beta]] dx^\beta,
\\\label{eqn : wstardaw}
\star [X, \star d_A Z]
&=
-[X^\alpha, \partial_\alpha Z_\beta + [A_\alpha, Z_\beta]] dx^\beta
+[X^\alpha, \partial_\beta Z_\alpha + [A_\beta, Z_\alpha]] dx^\beta,
\\\label{eqn : cubic term}
\star [X, \star [Y, Z]]
&=- [X^\alpha,   [Y_\alpha, Z_\beta]] dx^\beta + [X^\alpha,   [Y_\beta, Z_\alpha]] dx^\beta.
    \end{align}
These formulas are derived in Appendix \ref{appendix_computations}.
Using \eqref{eqn : dastarliebracket}--\eqref{eqn : cubic term}
and the Lorenz gauge condition $d_A^* W = 0$, we rewrite the first three equations in \eqref{eqn : linearized Y-M}, modulo lower order terms, as follows
    \begin{align}\label{eqn : 1-fold interaction}
\Box_A Y_{(k)} &= J_{(k)},\\
\label{eqn : 2-fold interaction} \Box_A Y_{(kl)} &= \tilde{N}(2), \\
\label{eqn : 3-fold interaction} \Box_A Y_{(123)} &= \tilde{N}(3),
    \end{align}
where the components of the right-hand sides of the last two equations read
    \begin{align*}
\tilde{N}_\beta(2)
&=  2 [Y_{(k)}^\alpha, \partial_\alpha Y_{(l), \beta}]
- [Y_{(k)}^\alpha, \partial_\beta Y_{(l), \alpha}]
 + 2 [Y_{(l)}^\alpha,   \partial_\alpha Y_{(k), \beta}]
 - [Y_{(l)}^\alpha,   \partial_\beta Y_{(k), \alpha}],
\\
\tilde{N}_\beta(3)
&=
\frac{1}{2}\sum_{\pi \in S_3} \bigg(
2[Y_{(\pi(1)\pi(2))}^\alpha,   \partial_\alpha(Y_{(\pi(3)), \beta})]   - [Y_{(\pi(1)\pi(2))}^\alpha,   \partial_\beta(Y_{(\pi(3)), \alpha})   ]
\\&\notag \qquad\quad+
2[Y_{(\pi(1))}^\alpha,   \partial_\alpha(Y_{(\pi(2)\pi(3)), \beta})]   - [Y_{(\pi(1))}^\alpha,   \partial_\beta(Y_{(\pi(2)\pi(3)), \alpha})   ]
\\&\notag \qquad\quad
+ 4 [Y_{(\pi(1))}^\alpha, [Y_{(\pi(2)), \alpha}, Y_{(\pi(3)), \beta}] ]    \bigg).
    \end{align*}

\section{Preliminaries on microlocal analysis}
\label{sec_muloc}

\subsection{Distributions associated to conormal bundles and two Lagrangians}

The advantage of working in the relative Lorenz gauge is that the Yang--Mills equations reduces to a cubic nonlinear wave equation with the linear part given by the connection wave operator $\Box_A$, modulo zeroth order terms.
The parametrix for $\Box_A$ is a distribution associated to an intersecting pair of Lagrangians (shortly an IPL distribution), in the sense of \cite{Melrose-Uhlmann-CPAM1979},
and we use the product calculus of conormal distributions to study the non-linear part.

The proof of Proposition \ref{prop_analytic_step} in the next section relies solely on symbolic computations, and we
recall here only that conormal and IPL distributions have principal symbols and that the corresponding symbol maps are isomorphisms, modulo lower order terms in a suitable sense. We will not recall the definitions of these classes of distributions, them being somewhat technical, instead we refer the reader to \cite{CLOP} for a review of the theory that we use and that was originally developed in \cite{Hormander-FIO1, Duistermaat-Hormander-FIO2, Melrose-Uhlmann-CPAM1979}.
Even the precise definition of spaces of symbols is not important for our present purposes, since we will consider only symbols that are positively homogeneous in the fibre variable.

Recall that a pseudodifferential operator $A$ on a manifold $X$ with a homogeneous principal symbol $a$ is said to be elliptic at $(x,\xi) \in T^\ast X \setminus 0$ if $a(x,\xi) \ne 0$.
The wavefront set $\WF(u) \subset T^\ast X \setminus 0$ of a distribution $u$ on $X$ is the complement of its regular set, whilst the regular set consists of such points $(x,\xi) \in T^\ast X \setminus 0$ that there is a zeroth order pseudodifferential operator $A$ that is elliptic at $(x,\xi)$ and that satisfies $Au \in C^\infty(X)$.
We denote by $\singsupp(u)$ the projection of $\WF(u)$ on $X$, and by $\WF(A)$ the essential support of $A$, that is, the projection of $\WF(\mathscr A) \subset (T^*X \setminus 0)^2$ on the first factor $T^*X\setminus 0$ where $\mathscr A$ is the Schwartz kernel of $A$.
Moreover, we say that $A$ is a microlocal cutoff near $(x,\xi) \in T^\ast X \setminus 0$ if $A$ is elliptic at $(x,\xi)$ and $\WF(A)$ is contained in a small neighbourhood of $\{(x, \lambda \xi) : \lambda > 0\}$.

Let $E$ be a complex smooth vector bundle over $X$ and $\Omega^{1/2}$ the half density bundle. A conormal distribution $u \in I^m(N^\ast Y; E \otimes \Omega^{1/2})$ of order $m \in \R$ is a compactly supported distribution taking values on the tensor bundle $E \otimes \Omega^{1/2}$ with $\text{WF}(u)$ contained in the conormal bundle $N^\ast Y$ of a submanifold $Y$ of $X$.
In addition, $u$ is required to have certain local structure on $Y$, see (2.4.1) in \cite{Hormander-FIO1}, precise form of which is not important for our purposes.
What is important is that the principal symbol $\sigma[u]$ of $u$ is a smooth section of $E \otimes \Omega^{1/2}$, invariantly defined on $N^\ast Y \setminus 0$, and that the principal symbol map $u \mapsto \sigma[u]$ gives the short exact sequence,
\begin{align}\label{short_exact}
0 \rightarrow I^{m-1}(N^\ast Y; E \otimes \Omega^{1/2}) \hookrightarrow  I^m(N^\ast Y; E \otimes \Omega^{1/2}) \\\notag \xrightarrow{\;\sigma\;} S^{m + n/4}/S^{m + n/4 - 1}(N^\ast Y; E \otimes \Omega^{1/2}) \rightarrow 0,
\end{align}
see \cite[Theorem 2.4.2]{Hormander-FIO1} and \cite[Theorem 18.2.11]{Hormander-Vol3}.
Here $n$ is the dimension of $X$ and $S^{m}(N^\ast Y; E \otimes \Omega^{1/2})$, with $m \in \R$, is the space of symbols, see \cite[Definition 18.2.10]{Hormander-Vol3}. For our purposes it suffices to note that positively homogeneous sections of degree $m$ are in this space, and that if $\Omega^{1/2}$ is trivialized by choosing
a nowhere vanishing positively homogeneous section $\mu$ of degree $r$,
then $\sigma[u]$ is positively homogeneous of degree $m + r$ if
$$(\mu^{-1} \sigma[u])(x, \lambda \xi) = \lambda^m (\mu^{-1} \sigma[u]) (x, \xi), \qquad \mbox{for any $\lambda > 0$ and $(x,\xi) \in N^* Y \setminus 0$}.$$
Since the half density is involved here, the given homogeneity looks a little different from the classical definition in \cite[p.67]{Hormander-Vol3}.

More generally, a Lagrangian distribution $u \in I^{m}(\Lambda; E \otimes \Omega^{1/2})$ is a compactly supported distribution  with $\text{WF}(u)$ contained in a conical Lagrangian submanifold $\Lambda$ of $T^\ast X \setminus 0$, and certain local structure, see (3.2.14) in \cite{Hormander-FIO1}. Its principal symbol is invariantly defined on $\Lambda$
as a smooth section of the bundle $E \otimes \Omega^{1/2} \otimes L$, where $L$ is the Maslov bundle over $\Lambda$.
Analogously to (\ref{short_exact}) the principal symbol map gives an isomorphism
    \begin{align*}
I^m(\Lambda; E \otimes \Omega^{1/2}) \to S^{m + n/4}(\Lambda; E \otimes \Omega^{1/2} \otimes L)
    \end{align*}
modulo lower order terms, see \cite[Theorem 3.2.5]{Hormander-FIO1}. We write also
    \begin{align*}
I(\Lambda; E) = \bigcup_{m \in \R} I^{m}(\Lambda; E \otimes \Omega^{1/2}).
    \end{align*}

The notion of Lagrangian distributions is insufficient to completely describe the fundamental solution of wave equations as two Lagrangian manifolds are needed in order to describe the propagating singularities and the singularities at the source. An IPL distribution $u \in I^{m}(\Lambda_0, \Lambda_1; E \otimes \Omega^{1/2})$ is
compactly supported distribution with $\text{WF}(u)$ contained in $\Lambda_0\cup\Lambda_1$, where $(\Lambda_0, \Lambda_1)$ is a cleanly intersecting pair of conical Lagrangian submanifolds of $T^\ast X \setminus 0$, and with certain local structure on $\Lambda_0 \cup \Lambda_1$, see \cite{Melrose-Uhlmann-CPAM1979}. Here $\Lambda_1$ is a manifold with boundary, while $\Lambda_0$ is a manifold without boundary, and by cleanly intersecting, we mean $$\Lambda_0 \cap \Lambda_1 = \partial \Lambda_1, \quad
T_\lambda (\Lambda_0) \cap T_\lambda (\Lambda_1) = T_\lambda(\partial \Lambda_1).$$
Again what we really need in the present paper is the symbol map for such distributions. In this case the symbol map is an isomorphism, modulo lower order terms, from
$I^{m}(\Lambda_0, \Lambda_1; E \otimes \Omega^{1/2})$
to the space
$$  \left\{ (a^{(1)}, a^{(0)}) \,\bigg|
\begin{array}{l} a^{(0)} \in S^{m-1/2+n/4}(\Lambda_0 \setminus \partial \Lambda_1; E \otimes \Omega^{1/2} \otimes L),\\
a^{(1)} \in S^{m+n/4}(\Lambda_1; E\otimes\Omega^{1/2} \otimes L),\\
a^{(1)}|_{\partial \Lambda_1} = \mathscr{R} a^{(0)},\\
\mbox{$ha^{(0)}$ is smooth up to $\partial \Lambda_1$ if $h$ vanishes on $\partial \Lambda_1$.}
\end{array}  \right\}.$$
We remark that $\mathscr{R}$ maps the $E\otimes\Omega^{1/2} \otimes L$-valued symbols over $\Lambda_0$ to the $E\otimes\Omega^{1/2} \otimes L$-valued symbols over $\Lambda_1$ and acts as a multiplication by a scalar on $E$.

If $(x,\xi) \in \Lambda_j \setminus \p \Lambda_1$ for $j=0$ or $j=1$, then there is a microlocal cutoff $\chi$ near $(x,\xi)$ such that $\chi u \in I(\Lambda_j; E)$
for all $u \in I^{m}(\Lambda_0, \Lambda_1; E \otimes \Omega^{1/2})$.
The only place where we need the full picture of IPL distributions, instead of the above microlocal reduction to Lagrangian distributions, is equation (\ref{eqn : symbol transport equation at the intersection}) giving an initial condition on $\p \Lambda_1$ for a transport equation on $\Lambda_1$.
Moreover,
apart from (\ref{eqn : symbol transport equation at the intersection}), we can also avoid the use of Lagrangian distributions in favour of conormal distributions, since all the Lagrangian manifolds $\Lambda_0$ and $\Lambda_1$ considered below will be conormal bundles away from $\p \Lambda_1$.

The principal symbol $\sigma[\Box_A]$ and the subprincipal symbol $\sigma_{\text{sub}}[\Box_A]$ read
$$\sigma[\Box_A](x, \xi) =  \xi^\alpha \xi_\alpha,  \quad \sigma_{\text{sub}}[\Box_A](x, \xi) =  2 \imath^{-1} [\xi^\alpha A_\alpha, \cdot].$$
We denote by $\Phi_s$, $s \in \R$, the flow of  the Hamilton vector field $H_{\sigma[\Box_A]}$ of $\sigma[\Box_A]$, and define for a subset $\mathscr B$ of the characteristic set $\Sigma$ of $\Box_A$ the future flowout of $\mathscr B$ by
\begin{align}
\label{def_flowout}
\{ (y,\eta) \in \Sigma;\
(y,\eta) = \Phi_s(x,\xi),\ s \in \R,\ (x,\xi) \in \mathscr B,\ y \ge x \}.
\end{align}

As $\Box_A$ is of real principal type one can use
the theory by
H\"{o}rmander and Duistermaat \cite{Duistermaat-Hormander-FIO2}
to understand its parametrix.
A completely symbolic parametrix construction, based on IPL distributions,
was given by Melrose and Uhlmann \cite{Melrose-Uhlmann-CPAM1979},
and
the following adaptation of their construction in the vector valued case can be found in \cite{CLOP}:
\begin{proposition}
Let $\Lambda_0$ be a conormal bundle such that $H_{\sigma[\Box_A]}$ is nowhere tangent to $\Lambda_0$.
Denote by $\Lambda_1$ the future flowout of $\Lambda_0 \cap \Sigma$.
Consider the wave equation \begin{equation}\label{eqn : Cauchy for wave}\left\{ \begin{array}{ll}
\Box_A u = f,& \mbox{in $\mathbb{R}^{1 + 3}$}\\ u|_{t<0} = 0,&
\end{array} \right. \end{equation}where $f \in I(\Lambda_0; E)$ and $E = T^\ast \mathbb{R}^{1+3} \otimes \mathfrak{g}$.
Then $u \in \bigcup_{m \in \R} I^{m}(\Lambda_0, \Lambda_1; E \otimes \Omega^{1/2})$ and the corresponding principal symbols satisfy
\begin{eqnarray}\label{eqn : symbol transport equation}(\mathscr{L}_{H_{\sigma[\Box_A]}} + \imath \sigma_{\text{sub}}[\Box_A]) \sigma[u] = 0 && \mbox{on $\Lambda_1 \setminus \Lambda_0$},\\ \label{eqn : symbol transport equation at the intersection}\sigma[u] = \mathscr{R}((\sigma[\Box_A])^{-1} \sigma[f]) && \mbox{on $\Lambda_1 \cap \Lambda_0$}.\end{eqnarray}
Here $\mathscr{L}_{H_{\sigma[\Box_A]}}$ denotes the Lie derivative with respect to $H_{\sigma[\Box_A]}$.
\end{proposition}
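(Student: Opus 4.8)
The plan is to follow the symbolic parametrix construction of Melrose and Uhlmann \cite{Melrose-Uhlmann-CPAM1979} for operators of real principal type, carried out in the bundle-valued setting as in \cite{CLOP}; we only indicate the main steps. First I would construct a forward parametrix $Q$ for $\Box_A$ whose Schwartz kernel is an IPL distribution associated with the pair consisting of the conormal bundle $N^\ast\Delta$ of the diagonal $\Delta \subset \R^{1+3}\times\R^{1+3}$ and the forward flowout of $N^\ast\Delta \cap (\Sigma\times\Sigma)$ under the Hamilton flow of the principal symbol $p(x,\xi)=\xi^\alpha\xi_\alpha$. Because $p$ is scalar, hence a multiple of the identity on the fibres of $E$, the construction is purely symbolic: on the elliptic part of $N^\ast\Delta$ one inverts $p$, while along the flow the symbol of $Q$ is propagated by a first-order transport equation whose only zeroth-order coefficient is $\imath\sigma_{\text{sub}}[\Box_A] = 2[\xi^\alpha A_\alpha,\cdot]$. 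This is the sole place where the bundle structure enters, turning the scalar transport equation of \cite{Melrose-Uhlmann-CPAM1979} into a linear ODE for an $\mathrm{End}(E)$-valued symbol. Using the IPL--conormal composition calculus recalled in Section~\ref{sec_muloc} one then checks that $Q$ maps $I(\Lambda_0;E)$ into $\bigcup_m I^m(\Lambda_0,\Lambda_1;E\otimes\Omega^{1/2})$: since $H_{\sigma[\Box_A]}$ is nowhere tangent to $\Lambda_0$ the relevant composition is clean, and the output second Lagrangian is precisely the future flowout $\Lambda_1$ of $\Lambda_0 \cap \Sigma$, meeting $\Lambda_0$ cleanly along $\partial\Lambda_1 = \Lambda_0 \cap \Sigma$.

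Next I would pass from the parametrix to the genuine solution. Set $u_0 = Qf$; then $\Box_A u_0 = f$ modulo $C^\infty$, and, $Q$ being the forward parametrix with properly supported kernel, $u_0$ vanishes for $t$ below $\supp f$. Subtracting from $u_0$ the solution of a wave equation with the smooth, compactly supported source $\Box_A u_0 - f$ and zero Cauchy data produces the exact solution $u$ of (\ref{eqn : Cauchy for wave}); this correction is smooth by standard hyperbolic theory, so $u$ and $u_0$ have the same wavefront set and the same principal symbols on $\Lambda_0$ and $\Lambda_1$. It therefore suffices to verify that $\sigma[u_0] = \sigma[Qf]$, which the symbolic construction supplies, satisfies (\ref{eqn : symbol transport equation}) and (\ref{eqn : symbol transport equation at the intersection}).

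For (\ref{eqn : symbol transport equation}): on $\Lambda_1 \setminus \Lambda_0$ we have $\Box_A u \in C^\infty$, so the leading symbol of $\Box_A u$ vanishes there. The Duistermaat--H\"ormander computation \cite{Duistermaat-Hormander-FIO2} of the leading symbol of $Pw$ for $w \in I^m(\Lambda_1)$ with $\Lambda_1 \subset \Sigma$ and $P$ of real principal type applies verbatim in the bundle-valued case, the subprincipal symbol being unambiguously defined because $p$ is scalar, and it gives this leading symbol as a nonzero constant multiple of $(\mathscr{L}_{H_{\sigma[\Box_A]}} + \imath\sigma_{\text{sub}}[\Box_A])\sigma[u]$; equating it to zero yields (\ref{eqn : symbol transport equation}), a linear transport ODE along $\Phi_s|_{\Lambda_1}$ with $\mathrm{End}(E)$-valued zeroth-order coefficient. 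For (\ref{eqn : symbol transport equation at the intersection}), which needs the full IPL picture near $\partial\Lambda_1$, write the IPL symbol of $u$ as the pair $(a^{(1)}, a^{(0)})$ of Section~\ref{sec_muloc}. On $\Lambda_0 \setminus \Sigma$ the operator $\Box_A$ is elliptic, so microlocally $u = \Box_A^{-1} f$ there, whence $a^{(0)} = (\sigma[\Box_A])^{-1}\sigma[f]$ on $\Lambda_0 \setminus \partial\Lambda_1$; this is an admissible IPL symbol, since multiplying by a function vanishing on $\partial\Lambda_1$ removes the simple pole of $(\sigma[\Box_A])^{-1}$ along $\Sigma$. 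The compatibility constraint $a^{(1)}|_{\partial\Lambda_1} = \mathscr{R} a^{(0)}$ built into the IPL symbol space then gives $\sigma[u]|_{\Lambda_1 \cap \Lambda_0} = \mathscr{R}((\sigma[\Box_A])^{-1}\sigma[f])$, which is (\ref{eqn : symbol transport equation at the intersection}); here $\mathscr{R}$ acts as a scalar on $E$, so it commutes with $(\sigma[\Box_A])^{-1}$ and with the transport.

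The hardest part is the bookkeeping around the intersection $\partial\Lambda_1$ that underlies (\ref{eqn : symbol transport equation at the intersection}): setting up the Melrose--Uhlmann model near $\partial\Lambda_1$, giving precise meaning to the restriction map $\mathscr{R}$, and verifying that $(\sigma[\Box_A])^{-1}\sigma[f]$ extends across $\partial\Lambda_1$ to an admissible IPL symbol -- and then confirming that none of this is obstructed by the passage from scalar-valued to $E$-valued distributions. Since $p$ is scalar, $\sigma_{\text{sub}}[\Box_A]$ appears only in the zeroth-order term of an otherwise scalar transport equation and $\mathscr{R}$ is scalar on $E$, so the vector-valued statement reduces to the scalar one of \cite{Melrose-Uhlmann-CPAM1979} essentially line by line; nonetheless this reduction must be made explicit, and since it is already carried out in \cite{CLOP}, I would simply quote it from there.
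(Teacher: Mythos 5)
Your proposal is correct and follows essentially the same route as the paper, which does not prove the proposition directly but instead invokes the bundle-valued adaptation of the Melrose--Uhlmann parametrix construction carried out in \cite{CLOP}; you reconstruct exactly that argument (forward IPL parametrix, passage to the exact solution by smooth correction, Duistermaat--H\"ormander transport equation on $\Lambda_1\setminus\Lambda_0$, and the ellipticity-plus-IPL-compatibility argument at $\partial\Lambda_1$) and then correctly note that the reduction from the $E$-valued to the scalar case is what \cite{CLOP} supplies. One small point worth making explicit: the claim that $(\sigma[\Box_A])^{-1}\sigma[f]$ is an admissible $a^{(0)}$ rests on $\sigma[\Box_A]$ vanishing simply and transversally on $\Sigma$ along $\Lambda_0$ (guaranteed here by $H_{\sigma[\Box_A]}$ being nowhere tangent to $\Lambda_0$), which you use implicitly but should state.
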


We will compute symbols related to the non-linear terms by using the following result, implicitly contained in \cite{Greenleaf-Uhlmann-CMP-1993} and explicitly formulated for example in \cite{CLOP}.
\begin{proposition}
Let $K_{(1)}$ and $K_{(2)}$ be two transversal submanifolds of $X$, let
    \begin{align*}
(x,\xi) \in N^*(K_{(1)} \cap K_{(2)}) \setminus (N^*K_{(1)} \cup N^*K_{(2)}),
    \end{align*}
and let
$u_{(j)} \in I(N^* K_{(j)}; E)$, $j=1,2$.
If $\chi$ is a microlocal cutoff near $(x,\xi)$ and $\mu$ is a nowhere vanishing half density on $X$, then writing $u_{(1)} u_{(2)} = \mu (\mu^{-1}u_{(1)}) (\mu^{-1} u_{(2)})$,
there holds $\chi(u_{(1)}u_{(2)}) \in I(N^*(K_{(1)} \cap K_{(2)}); E)$ and
\begin{equation}
\label{eqn : symbol calculus of conormal} \sigma[\chi(u_{(1)}u_{(2)})](x, \xi) = \mu^{-1}(x)\sigma[\chi](x, \xi) \sigma[u_{(1)}](x, \xi_1) \sigma[u_{(2)}](x, \xi_{(2)}),
\end{equation} where $\xi = \xi_{(1)} + \xi_{(2)}$ with $\xi_{(1)} \in N^\ast K_{(1)}$ and $\xi_{(2)} \in N^\ast K_{(2)}$.
\end{proposition}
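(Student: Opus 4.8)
The statement is the standard product formula for conormal distributions attached to a pair of transversally intersecting submanifolds, and I would prove it by a local reduction to an oscillatory-integral computation. \emph{Step 1: normalise the geometry.} Transversality of $K_{(1)}$ and $K_{(2)}$ means $T_y K_{(1)} + T_y K_{(2)} = T_y X$ for $y$ in the intersection, so near $x$ one can choose coordinates $(x',x'',x''')$ on $X$, with $x'\in\R^{k_1}$ and $x''\in\R^{k_2}$, $k_j$ the codimension of $K_{(j)}$, such that $K_{(1)} = \{x'=0\}$, $K_{(2)} = \{x''=0\}$, and hence $K_{(1)}\cap K_{(2)} = \{x'=0,\ x''=0\}$. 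In these coordinates $N^*_y(K_{(1)}\cap K_{(2)}) = N^*_y K_{(1)}\oplus N^*_y K_{(2)}$, which is the source of the unique decomposition $\xi = \xi_{(1)}+\xi_{(2)}$ with $\xi_{(j)}\in N^*_x K_{(j)}$ in the statement; moreover the hypothesis $(x,\xi)\notin N^*K_{(1)}\cup N^*K_{(2)}$ says precisely that $\xi_{(1)}\ne 0$ and $\xi_{(2)}\ne 0$, so a microlocal cutoff $\chi$ with $\WF(\chi)$ in a sufficiently small cone about the ray through $(x,\xi)$ meets neither $N^*K_{(1)}$ nor $N^*K_{(2)}$.

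\emph{Step 2: reduce to scalars and write oscillatory integrals.} Trivialise $E$ near $x$ and use the nowhere vanishing half density $\mu$ to trivialise $\Omega^{1/2}$; since by definition $u_{(1)}u_{(2)} = \mu\,(\mu^{-1}u_{(1)})(\mu^{-1}u_{(2)})$ and the fibrewise bilinear multiplication on $E$ acts componentwise in the trivialisation, it is enough to treat scalar conormal distributions. Write each $u_{(j)}$, microlocally near the relevant part of $N^*K_{(j)}$, in the standard form
\[
u_{(j)}(x) = (2\pi)^{-k_j}\int_{\R^{k_j}} e^{i\langle x^{(j)},\theta_j\rangle}\,a_j(x,\theta_j)\,d\theta_j,
\qquad x^{(1)}=x',\ x^{(2)}=x'',
\]
where $a_j$ is a classical symbol whose leading homogeneous part, restricted to $N^*K_{(j)}$, represents $\sigma[u_{(j)}]$ modulo lower order. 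The product is then the oscillatory integral
\[
u_{(1)}u_{(2)}(x) = (2\pi)^{-(k_1+k_2)}\int_{\R^{k_1}\times\R^{k_2}} e^{i(\langle x',\theta_1\rangle+\langle x'',\theta_2\rangle)}\,a_1(x,\theta_1)\,a_2(x,\theta_2)\,d\theta_1\,d\theta_2,
\]
whose phase $\langle x',\theta_1\rangle+\langle x'',\theta_2\rangle$ is a nondegenerate phase function parametrising exactly $N^*(K_{(1)}\cap K_{(2)})$ and whose amplitude $a_1 a_2$ is again a classical symbol. Because $\WF(u_{(j)})\subset N^*K_{(j)}$ and transversality gives $N^*_y K_{(1)}\cap(-N^*_y K_{(2)}) = \{0\}$, H\"ormander's calculus for products of distributions shows $u_{(1)}u_{(2)}$ is well defined with $\WF(u_{(1)}u_{(2)})\subset N^*K_{(1)}\cup N^*K_{(2)}\cup N^*(K_{(1)}\cap K_{(2)})$; applying $\chi$, which by Step 1 avoids the first two pieces, gives $\chi(u_{(1)}u_{(2)})\in I(N^*(K_{(1)}\cap K_{(2)}); E)$.

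\emph{Step 3: the symbol.} Reading the principal symbol off the last oscillatory integral after inserting $\chi$, it equals the leading part of $\sigma[\chi](x,\xi)\,a_1(x,\theta_1)\,a_2(x,\theta_2)$ restricted to the Lagrangian parametrised by $(x',x'',x''',\theta_1,\theta_2,0)$, i.e. $\sigma[\chi](x,\xi)\,\sigma[u_{(1)}](x,\xi_{(1)})\,\sigma[u_{(2)}](x,\xi_{(2)})$ up to the half-density normalisation. Undoing $\Omega^{1/2}\cong\C\cdot\mu$ inserts exactly one factor of $\mu^{-1}$ — concretely because $\mu\cdot\mu^{-1}\cdot\mu^{-1}=\mu^{-1}$, matching the structure $u_{(1)}u_{(2)}=\mu(\mu^{-1}u_{(1)})(\mu^{-1}u_{(2)})$ — which yields the claimed formula. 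The one place demanding care, and the step I would check most carefully, is that the Maslov and half-density factors arising from the two phases $\langle x',\theta_1\rangle$ and $\langle x'',\theta_2\rangle$ combine without spurious constants; here I would follow the explicit treatment in \cite{CLOP} and the computations of \cite{Greenleaf-Uhlmann-CMP-1993}.
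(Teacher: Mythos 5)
The paper does not prove this proposition — it cites \cite{Greenleaf-Uhlmann-CMP-1993} and \cite{CLOP} — so there is no internal proof to compare against. Your oscillatory-integral argument is the standard route and its overall structure is sound: the coordinate normalisation, the direct-sum decomposition $N^*_y(K_{(1)}\cap K_{(2)}) = N^*_y K_{(1)}\oplus N^*_y K_{(2)}$ from transversality, the identification of $\xi_{(1)}\neq 0$, $\xi_{(2)}\neq 0$ with the hypothesis $(x,\xi)\notin N^*K_{(1)}\cup N^*K_{(2)}$, the reduction to scalars via trivialisation, and the final stationary-phase reading of the symbol all check out.

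One claim in Step~2 is not quite right and is worth tightening: the amplitude $a_1(x,\theta_1)\,a_2(x,\theta_2)$ is \emph{not} in general a classical Kohn--Nirenberg symbol in the joint variable $(\theta_1,\theta_2)\in\R^{k_1+k_2}$. The product estimates $|\partial^\alpha_{\theta_1}\partial^\beta_{\theta_2}(a_1a_2)|\lesssim (1+|\theta_1|)^{m_1-|\alpha|}(1+|\theta_2|)^{m_2-|\beta|}$ do not control $(1+|\theta_1|+|\theta_2|)^{m_1+m_2-|\alpha|-|\beta|}$ near the coordinate planes $\theta_1=0$ or $\theta_2=0$; a $\theta_1$-derivative buys no decay in $\theta_2$. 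This is in fact the essential role of the microlocal cutoff $\chi$, beyond discarding $\WF(u_{(1)}u_{(2)})\cap(N^*K_{(1)}\cup N^*K_{(2)})$: because $\WF(\chi)$ sits in a small cone about the ray through $(x,\xi)$, and in your coordinates that ray has both $\xi_{(1)}\neq 0$ and $\xi_{(2)}\neq 0$, the cutoff restricts $(\theta_1,\theta_2)$ to a closed cone on which $|\theta_1|$, $|\theta_2|$, and $|\theta_1|+|\theta_2|$ are all comparable, and there the product-type estimates \emph{do} reduce to classical ones, so $\chi(u_{(1)}u_{(2)})$ is a genuine conormal distribution. You do apply $\chi$ before drawing the conclusion, so the argument goes through, but the write-up reads as if $u_{(1)}u_{(2)}$ were already in $I(N^*(K_{(1)}\cap K_{(2)});E)$ before cutting off, which need not hold. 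Once that is made explicit, Step~3 and the count of $\mu^{-1}$ powers are as you say.
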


\subsection{Parallel transport for the principal symbol}

As in \cite{CLOP}, the transport equation \eqref{eqn : symbol transport equation} can be understood as a parallel transport equation as in Section \ref{sec_pt},
\begin{align*}
\p_s \hat u_\alpha + [\pair{A, \dot \gamma}, \hat u_\alpha] = 0, \quad \hat u_\alpha(s) = e^{\varrho(s)}(\mu^{-1}\sigma[u_\alpha])(\bm{\beta}(s)),\quad u = u_\alpha dx^\alpha.
\end{align*}
Here $\mu$ is a nowhere vanishing half density on $\Lambda_1 \setminus \Lambda_0$,
$\bm{\beta}(s) = (\gamma(s), \dot \gamma^*(s))$, with $\dot\gamma^* = \dot\gamma_\alpha dx^\alpha$,
is the bicharacteristic curve
 emanating from $\bm{\beta}(0) \in \Lambda_0 \cap \Lambda_1$,
 and
    \begin{align}\label{def_rho}
 \varrho(s) = \int_{0}^s (\mu^{-1}\mathscr{L}_{H_{\sigma[\Box_A]}} \mu)(\bm{\beta}(r)) dr.
    \end{align}
Comparing with (\ref{adjoint_pt}), we see that the 1-form components $\hat u_\alpha$ satisfy the parallel transport equation on $M \times \mathfrak g$ corresponding to the adjoint representation of $G$.
In particular, if $x,y \in \mathbb L$ and the singular support of $f$ does not intersect the line segment from $x$ to $y$, then
    \begin{align}\label{pt_symbol}
e^{\varrho(s)}(\mu^{-1}\sigma[u_\alpha])[u_\alpha](y, \xi) =  \P_{y \gets x}^{A,\Ad} \left((\mu^{-1}\sigma[u_\alpha])[u_\alpha](x, \xi)\right),
    \end{align}
where $\xi$ is the covector corresponding to the direction of the line segment, and $\bm{\beta}$ in (\ref{def_rho}) satisfies $\bm{\beta}(0) = (x,\xi)$ and $\bm{\beta}(s) = (y,\xi)$.

We will also need the fact that positive homogeneity is preserved in (\ref{pt_symbol}) in the sense of the following proposition, where we have fixed a nowhere vanishing half density $\mu$ of degree $1/2$ on $\Lambda_1 \setminus \Lambda_0$.

\begin{proposition}
Let $u \in I(\Lambda_0, \Lambda_1; T^\ast \mathbb{R}^{1+3} \otimes \mathfrak{g} \otimes \Omega^{1/2})$ be an IPL distribution solving \eqref{eqn : Cauchy for wave} and its symbol $\sigma[u]$ positively homogeneous of degree $q + 1/2$ on $\Lambda_1 \setminus \Lambda_0$.
Suppose that $\Lambda_1 \setminus \Lambda_0 = N^\ast K \setminus 0$ for some $K \subset \mathbb{R}^{1+3}$. Then for any $(y, \xi) \in N^\ast K \setminus 0$ with $(y, \xi) = \Phi_s(x, \xi)$ for some $s \in \mathbb{R}$, we have \begin{equation}\label{eqn : parallel transport for homogeneous}
e^{\varrho(s)}(\mu^{-1}\sigma[u])(y, \pm \lambda \xi) = \lambda^q  \P_{y \gets x}^{A,\Ad}  ((\mu^{-1}\sigma[u])(x, \pm \xi) ), \quad \mbox{for any $\lambda > 0$}.
\end{equation}
\end{proposition}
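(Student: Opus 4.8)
The plan is to combine the parallel transport identity (\ref{pt_symbol}) already established for the $1$-form components $\hat u_\alpha$ with the homogeneity structure of conormal/IPL symbols recorded in Section~\ref{sec_muloc}, and simply check that the two are compatible under rescaling of the covector. First I would observe that (\ref{pt_symbol}) is proven by integrating the transport equation (\ref{eqn : symbol transport equation}) along the single bicharacteristic $\bm\beta$ from $(x,\xi)$ to $(y,\xi)$; it involves no choice of a specific covector in the fibre beyond the one on this chosen bicharacteristic. The content of the present proposition is that the degree of homogeneity survives this transport, i.e.\ that if we move the basepoint along a \emph{different} bicharacteristic $\bm\beta_\lambda$ emanating from $(x,\pm\lambda\xi)$ rather than from $(x,\pm\xi)$, the resulting symbol picks up exactly the factor $\lambda^q$.

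The key steps, in order, are as follows. Step 1: Since $\Lambda_1\setminus\Lambda_0 = N^\ast K\setminus 0$ is a conormal bundle, its fibres are linear, so for $(y,\xi)\in N^\ast K\setminus 0$ the point $(y,\pm\lambda\xi)$ also lies in $N^\ast K\setminus 0$ for every $\lambda>0$; thus the left-hand side of (\ref{eqn : parallel transport for homogeneous}) is well-defined. Step 2: The Hamilton vector field $H_{\sigma[\Box_A]}$ of the homogeneous-degree-$2$ symbol $\xi^\alpha\xi_\alpha$ is homogeneous of degree $1$ in $\xi$; hence its flow $\Phi_s$ satisfies the scaling relation $\Phi_s(x,\lambda\xi) = (\pi_1\Phi_{\lambda s}(x,\xi),\ \lambda\,\pi_2\Phi_{\lambda s}(x,\xi))$ — in other words, the bicharacteristic through $(x,\lambda\xi)$ is the same unparametrized curve in the base as the one through $(x,\xi)$, traversed with rescaled speed and with the covector scaled by $\lambda$. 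In particular $(y,\xi)=\Phi_s(x,\xi)$ forces $(y,\lambda\xi)=\Phi_{s/\lambda}(x,\lambda\xi)$. Step 3: Insert this into the transport equation. Writing the transport (\ref{eqn : symbol transport equation}) componentwise as the ODE $\p_s\hat u_\alpha + [\pair{A,\dot\gamma},\hat u_\alpha]=0$ with $\hat u_\alpha(s) = e^{\varrho(s)}(\mu^{-1}\sigma[u_\alpha])(\bm\beta(s))$, I would note that along the rescaled bicharacteristic the connection coefficient $\pair{A,\dot\gamma}$ is unchanged as a function of the base point (it depends only on position and the \emph{direction} of $\dot\gamma$, not its length, after reparametrizing by arclength-in-base), so the adjoint parallel transport operator $\P^{A,\Ad}_{y\gets x}$ appearing is literally the same operator in both cases. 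Step 4: It then remains to track the scalar factor. The symbol $\sigma[u]$ is positively homogeneous of degree $q+1/2$ with respect to the chosen half-density $\mu$ of degree $1/2$, which by the homogeneity convention stated after (\ref{short_exact}) means $(\mu^{-1}\sigma[u])(x,\lambda\xi) = \lambda^q (\mu^{-1}\sigma[u])(x,\xi)$ at the initial point; and one checks that $\varrho(s)$, being an integral of $\mu^{-1}\mathscr L_{H_{\sigma[\Box_A]}}\mu$ with $\mu$ homogeneous of a fixed degree, is unchanged when the bicharacteristic is rescaled and reparametrized (the degree-$r$ homogeneity of $\mu$ cancels the degree shift in $H_{\sigma[\Box_A]}$, so the integrand is scale-invariant and the integral over the same base curve gives the same $\varrho$). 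Combining Steps 2–4, applying (\ref{pt_symbol}) at the covector $\pm\lambda\xi$ and at $\pm\xi$, and using the initial homogeneity gives (\ref{eqn : parallel transport for homogeneous}).

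I expect the main obstacle to be the bookkeeping in Step 4: making precise the claim that $\varrho(s)$ is invariant under the simultaneous rescaling of the covector and reparametrization of the bicharacteristic, which requires being careful about how the half-density $\mu$ on $\Lambda_1\setminus\Lambda_0$ transforms under the scaling map $(y,\xi)\mapsto(y,\lambda\xi)$ and under reparametrization of the flow — in particular that the degree-$1/2$ homogeneity of $\mu$ is exactly what is needed for $\varrho$ to be scale-invariant rather than picking up a spurious $\log\lambda$ or power of $\lambda$. A clean way to organize this is to pass to homogeneous coordinates on the conical Lagrangian $N^\ast K\setminus 0$: split it as $(\text{cosphere bundle})\times\R_+$, note that $\Phi_s$ acts trivially on the $\R_+$ factor and $H_{\sigma[\Box_A]}$ pushes forward to a $\lambda$-independent vector field on the cosphere-bundle factor after dividing by the homogeneity weight, and then everything except the explicit power $\lambda^q$ lives on the base cosphere bundle and is manifestly $\lambda$-independent. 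The remaining verifications — that the endpoint of the rescaled bicharacteristic is indeed $(y,\pm\lambda\xi)$ and that the sign $\pm$ is carried along unchanged — are immediate from Step 2.
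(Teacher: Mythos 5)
Your argument is correct and, as far as one can tell (the paper defers the proof to \cite[Proposition~1]{CLOP} rather than giving it here), it follows the natural and presumably identical route: use homogeneity of the Hamilton flow for a degree-$2$ symbol to relate the bicharacteristic through $(x,\pm\lambda\xi)$ to the one through $(x,\pm\xi)$, observe that reparametrization does not change $\P^{A,\Ad}_{y\gets x}$, check that $\varrho$ is scale-invariant because $\mu^{-1}\mathscr L_{H_{\sigma[\Box_A]}}\mu$ is homogeneous of degree $1$ (the degree of $\mu$ cancels), and then feed the initial homogeneity $(\mu^{-1}\sigma[u])(x,\pm\lambda\xi)=\lambda^q(\mu^{-1}\sigma[u])(x,\pm\xi)$ through \eqref{pt_symbol}. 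Your identification of the $\varrho$-invariance as the delicate point, and your resolution of it via the degree count, is exactly the right bookkeeping.
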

Recall that $\Phi_s$ is the flow of the Hamilton vector field $H_{\sigma[\Box_A]}$. For the proof, the reader is referred to  our work \cite[Proposition 1]{CLOP}.

\section{Proof of Proposition \ref{prop_analytic_step}}
\label{sec_proof}

We follow the construction in \cite{CLOP}, however, the analysis in the present paper is more involved due to the non-linearity in Yang--Mills equations being more complicated than the simple cubic non-linearity considered in \cite{CLOP}, and also due to the gauge invariance of the Yang--Mills equations.
We will focus on the new features of the proof and refer to \cite{CLOP} for technical details that are unchanged.

In order to apply the microlocal machinery in Section \ref{sec_muloc} we need to consider the Yang--Mills equations on the tensor product bundle $T^* \R^{1+3} \otimes \g \otimes \Omega^{1/2}$.
This is achieved by choosing a nowhere vanishing half density $\mu$ on $\R^{1+3}$ and by considering
the conjugated operator $\mu^{-1} P(\mu W)$ instead of
$P(W) = \Box_A W + \star[W, \star F_A] + \mathcal N(W)$, cf. (\ref{eq_YM_relative_L}).
In fact, we choose $\mu$ so that $\mu=1$ identically in the Cartesian coordinates,
and to simplify the notation, we omit writing $\mu$ in what follows. However, we warn the reader that additional determinant factors appear in other coordinates. These can be included in the factors $\tilde \alpha_{(k)}$ in (\ref{Jk_symb}), and
$\alpha_{(k)}$, $\alpha_{(kl)}$ and $\alpha$
in (\ref{Y_rescaled}).

Recall that $\mathbb S^+(\mho)$ is defined by (\ref{def_diamonds_etc}).
Let $(x_{(1)},y,z) \in \mathbb S^+(\mho)$ and consider the line segments
$\gamma_{y \gets x_{(1)}}$ and $\gamma_{z \gets y}$ from $x_{(1)}$ to $y$ and from $y$ to $z$, respectively.
We write
    \begin{align*}
\eta = \dot \gamma_{z \gets y}^*(0), \quad
\xi_{(1)} = \dot \gamma_{y \gets x_{(1)}}^*(\ell),
    \end{align*}
where $\ell \in \R$ satisfies $\gamma_{y \gets x_{(1)}}(\ell) = y$
and $\cdot^* : T_y \R^{1+3} \to T_y^* \R^{1+3}$ denotes the tangent-cotangent isomorphism given by the Minkowski metric. After rescaling $\eta$ and $\xi_{(1)}$, and after a rotation in $\R^3$, we may assume that
    \begin{align}\label{def_eta_xi}
\eta = (1, -a(r), r, 0), \quad \xi_{(1)} = (1,1,0,0),
    \end{align}
where $a(r) = \sqrt{1-r^2}$ and $r \in (-1,1)$.
Then we let $s > 0$ be small and set
    \begin{align}\label{xi23}
\xi_{(2)} = (1,a(s),s,0), \quad \xi_{(3)} = (1,a(s),-s,0).
    \end{align}
The rationale behind this choice of $\xi_{(k)}$, $k=2,3$, is that now $\eta$ can be written as the linear combination
    \begin{align*}
\eta = \kappa_{(1)} \xi_{(1)} + \kappa_{(2)} \xi_{(2)} + \kappa_{(3)} \xi_{(3)},
    \end{align*}
where the scalars $\kappa_{(k)}$ are given explicitly by
       \begin{align}\label{def_kappas}
\kappa_{(1)} = 1 - \frac{1 + a(r)}{1 - a(s)},
\quad
\kappa_{(2)} = \frac{1 + a(r)}{2(1 - a(s))} + \frac{1}{2} \frac{r}{s},
\quad
\kappa_{(3)} = \frac{1 + a(r)}{2(1 - a(s))} - \frac{1}{2} \frac{r}{s}.
    \end{align}

Writing $\gamma(\cdot; x, \xi)$ for the geodesic on $\R^{1+3}$ with the initial conditions $\gamma(0; x, \xi) = x$ and $\dot\gamma^*(0;x, \xi) = \xi$, we define
    \begin{align*}
x_{(k)} = \gamma(-\ell;y,\xi_{(k)}), \quad k=2,3.
    \end{align*}
Then $x_{(2)}, x_{(3)} \in \mho$ for small enough $s > 0$.

It turns out that in the coordinates satisfying (\ref{def_eta_xi})--(\ref{xi23}) it is enough to use sources with all but the $dx^2$ component vanishing.
Let $b_{(k)} \in \mathfrak g$ and set
    \begin{align}\label{the_sources}
J_{(k),2} = J_{(k),2}(s) = b_{(k)} \chi_{(k)} \delta_{x_{(k)}},
\quad k=1,2,3,
    \end{align}
where $\delta_{x_{(k)}}$ is the Dirac delta distribution at $x_{(k)}$
and $\chi_{(k)}$
is a microlocal cutoff near $(x_{(k)}, \pm\xi_{(k)})$. Here the the sign is chosen to be that of $\kappa_{(k)}$, that is, $-$ for $k=1$ and $+$ for $k=2,3$. Moreover, $\chi_{(k)}$ is chosen so that
\begin{itemize}
\item[($\chi$1)] the principal symbol $\sigma[\chi_{(k)}]$ is positively homogeneous of degree $q$;
\item[($\chi$2)] $\supp(J_{(k),2}) \subset \mho_{(k)}$
where $\mho_{(k)} \subset \mho$ is a neighbourhood of $x_{(k)}$,
and for all $k \ne l$ it holds that $x_{(l)} \notin \mathcal J^+(\mho_{(k)})$ where
    \begin{align*}
\mathcal J^+(\mho_{(k)}) = \{y \in \R^{1+3} :
\text{$x < y$ or $x = y$ for some $x \in \mho_{(k)}$} \};
    \end{align*}
\item[($\chi$3)]
$\hat \mho_{(k)} \cap \Gamma_{(l)} = \emptyset$ for all $k \ne l$ where
    \begin{align*}
\hat \mho_{(k)} &= \{(t,x') \in \R^{1+3}: (\tilde t, x') \in \mho_{(k)} \text{ for some $\tilde t \in \R$} \},
\\
\Gamma_{(k)} &= \{\gamma(\tilde t; x_{(k)}, \xi) : \tilde t \in \R,\ (x_{(k)}, \xi) \in \WF(\chi_{(k)})\}.
    \end{align*}
\end{itemize}
The degree $q \in \R$ is chosen negative enough so that $J_{(k),2} \in H_0^7(\mho; \mathfrak g)$.
The geometric setting is shown in Figure \ref{fig_geom}.

\begin{figure}
\centering
\includegraphics[width=0.5\textwidth]{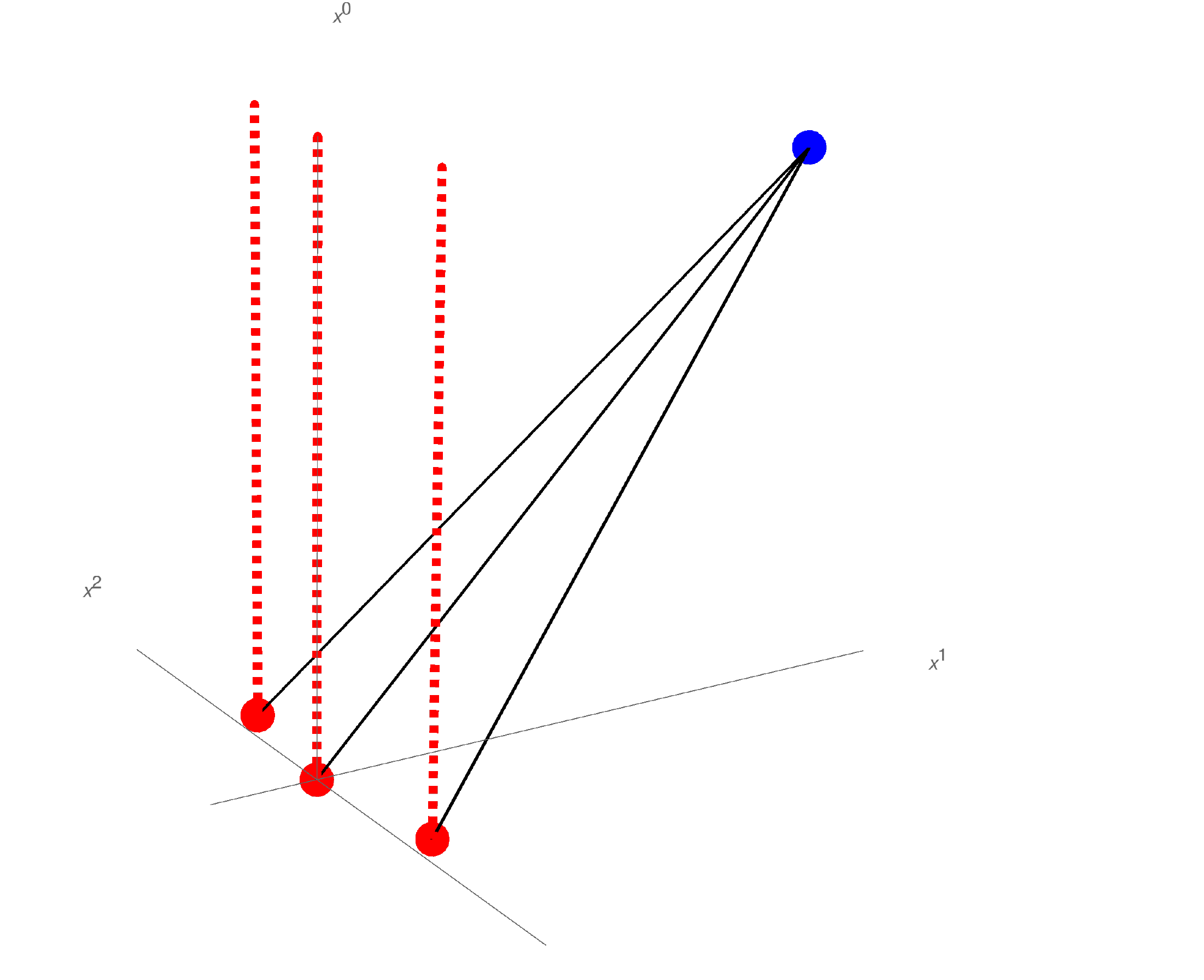}
\caption{Three line segments (in black) along the lightlike geodesics $\gamma_{y \gets x_{(k)}}$ from $x_{(k)}$ (in red) to $y$ (in blue), $k=1,2,3$, in the hyperplane $x^3=0$.
Coordinates are chosen so that (\ref{def_eta_xi})--(\ref{xi23}) hold and that $x_{(1)}$ is at the origin. All three points $x_{(k)}$ are in the plane $x^0=0$, and there exist neighbourhoods $\Omega_{(k)}$ of $x_{(k)}$ so that ($\chi$2) holds.
The set $\hat \Omega_{(k)}$ is a small neighbourhood of the dashed red line through $x_{(k)}$ (in particular, $\hat \Omega_{(1)}$ is a neighbourhood of the $x^0$-axis),
and $\Gamma_{(k)}$ is a small neighbourhood of the black line through $x_{(k)}$, for small $\Omega_{(k)}$ and $\WF(\chi_{(k)})$, hence ($\chi$3) holds.}
\label{fig_geom}
\end{figure}

\begin{proposition}\label{prop_muloc}
Let $x_{(1)},y,z$ and $\eta$, as well as, $b_{(k)}$ and $J_{(k),2}(s)$, with $k=1,2,3$ and small $s>0$, be as above, and define for $\epsilon_{(k)} \in \R$, $k=1,2,3$,
    \begin{align}\label{def_J2}
J_{2}(\epsilon,s) = \epsilon_{(1)} J_{(1),2}(s) + \epsilon_{(2)} J_{(2),2}(s) + \epsilon_{(3)} J_{(3),2}(s), \quad \epsilon = (\epsilon_{(1)}, \epsilon_{(2)}, \epsilon_{(3)}).
    \end{align}
Let $\tilde A$ and $L$ be as in Proposition \ref{prop_source_to_sol}.
Suppose that $r \ne 0$ in (\ref{def_eta_xi}), $b_{(2)} = b_{(3)}$.
Then for any $s_0>0$, the following point values of symbols
    \begin{align*}
\sigma[\p_{\epsilon_{(1)}}\p_{\epsilon_{(2)}}\p_{\epsilon_{(3)}}L(0, J_{2}(0, s), 0)](z,\eta), \quad s \in (0,s_0),
    \end{align*}
determine $\mathbf{S}^{\tilde A,\Ad}_{z \gets y \gets x_{(1)}}[b_{(2)}, [b_{(1)}, b_{(2)}]]$.
\end{proposition}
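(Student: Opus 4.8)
The plan is to reduce the assertion to a single principal-symbol computation for the third linearization $Y_{(123)}$ of the relative-Lorenz-gauge Yang--Mills equation at $(z,\eta)$, carried out in the Cartesian coordinates normalized by (\ref{def_eta_xi})--(\ref{xi23}), and then to read off the broken non-abelian transport by letting $s\to 0$. The first point is to remove the temporal gauge. Since every source $J_{(k)}=J_{(k),2}\,dx^{2}$ has only a $dx^{2}$-component and $\tilde A$ is in temporal gauge on $\mho$, the $dx^{0}$-components $Y_{(k),0}$ of the one-fold waves satisfy $\Box_{\tilde A}Y_{(k),0}=0$ modulo lower order and with vanishing data, hence have vanishing principal symbol on the flowout; consequently each $\epsilon$-derivative falling on $\mathscr T$ at $\epsilon=0$ produces a term that is either of lower order or whose wavefront set, thanks to the separation conditions ($\chi$1)--($\chi$3) and finite speed of propagation, misses $(z,\eta)$. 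So $\sigma[\p_{\epsilon_{(1)}}\p_{\epsilon_{(2)}}\p_{\epsilon_{(3)}}L(0,J_{2}(0,s),0)](z,\eta)=\sigma[Y_{(123)}](z,\eta)$ for the system (\ref{eqn : 1-fold interaction})--(\ref{eqn : 3-fold interaction}) with $J_{(k)}=J_{(k),2}\,dx^{2}$ and background $\tilde A$.

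Next comes the symbol computation, following the template of \cite{CLOP} but now tracking the Lie algebra. By the $\Box_{\tilde A}$-parametrix together with the adjoint transport law (\ref{pt_symbol}), $Y_{(k)}$ is, microlocally near the bicharacteristic through $(x_{(k)},\pm\xi_{(k)})$, conormal to the null hyperplane through $x_{(k)}$ with conormal $\xi_{(k)}$, has only a $dx^{2}$-component, and its symbol at $y$ is a nonzero scalar multiple of $\beta_{(k)}:=\P^{\tilde A,\Ad}_{y\gets x_{(k)}}b_{(k)}$. For the two-fold waves the key observation, as in \cite{CLOP}, is that the covector $\zeta_{(kl)}:=\kappa_{(k)}\xi_{(k)}+\kappa_{(l)}\xi_{(l)}$ is conormal to the intersection of the two relevant null hyperplanes but \emph{non-characteristic} (since $\pair{\xi_{(k)},\xi_{(l)}}\neq0$ and $\kappa_{(k)},\kappa_{(l)}\neq0$ for small $s$), so $Y_{(kl)}$ is there determined elliptically from $\tilde N(2)$ and the product calculus (\ref{eqn : symbol calculus of conormal}) yields $\sigma[Y_{(kl),\nu}](y,\zeta_{(kl)})$ as an explicit covector-valued multiple of $[\beta_{(k)},\beta_{(l)}]$. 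Feeding this into $\tilde N(3)$ and applying (\ref{eqn : symbol calculus of conormal}) once more with the splitting $\eta=\zeta_{(kl)}+\kappa_{(m)}\xi_{(m)}$ --- unique because $\xi_{(1)},\xi_{(2)},\xi_{(3)}$ are linearly independent and the signs of the $\kappa$'s match the microlocalizations --- gives $\sigma[\tilde N(3)](y,\eta)$ as a sum over the three pairings $(\{k,l\},m)$ of covector-valued scalars times $[[\beta_{(k)},\beta_{(l)}],\beta_{(m)}]$. Here the first Lie-algebraic input enters: the purely cubic contributions, built from $[Y^{\alpha}_{(\pi(1))},[Y_{(\pi(2)),\alpha},Y_{(\pi(3)),\beta}]]$, carry a $\pi$-independent scalar and $\sum_{\pi\in S_{3}}[\beta_{(\pi(1))},[\beta_{(\pi(2))},\beta_{(\pi(3))}]]=0$ by the Jacobi identity, so they drop out entirely.

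Finally, I would apply the $\Box_{\tilde A}$-parametrix one last time: since $\eta=\sum_{k}\kappa_{(k)}\xi_{(k)}$ is null and lies in the conormal space of the triple intersection, i.e.\ on $\p\Lambda_{1}$, (\ref{eqn : symbol transport equation at the intersection}) followed by (\ref{pt_symbol}) gives $\sigma[Y_{(123)}](z,\eta)=\P^{\tilde A,\Ad}_{z\gets y}$ applied to a nonzero scalar times $\sigma[\tilde N(3)](y,\eta)$. Now use $b_{(2)}=b_{(3)}$: as $s\to0$ one has $x_{(2)},x_{(3)}\to x_{(1)}$, $\beta_{(k)}(s)\to\tilde\beta_{(k)}:=\P^{\tilde A,\Ad}_{y\gets x_{(1)}}b_{(k)}$ and $\beta_{(3)}(s)-\beta_{(2)}(s)=O(s)$; writing the three monomials in the Jacobi basis $[[\beta_{(1)},\beta_{(2)}],\beta_{(3)}]$, $[[\beta_{(1)},\beta_{(3)}],\beta_{(2)}]$, the $(\{2,3\},1)$-contribution cancels in the limit and both surviving monomials tend to $[[\tilde\beta_{(1)},\tilde\beta_{(2)}],\tilde\beta_{(2)}]=-\P^{\tilde A,\Ad}_{y\gets x_{(1)}}[b_{(2)},[b_{(1)},b_{(2)}]]$. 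Hence, for a component of the symbol in which the leading-order-in-$s$ coefficient is nonzero, a known rescaling of $\sigma[Y_{(123)}](z,\eta)$ converges as $s\to0$ to a nonzero constant times $\P^{\tilde A,\Ad}_{z\gets y}\P^{\tilde A,\Ad}_{y\gets x_{(1)}}[b_{(2)},[b_{(1)},b_{(2)}]]=\mathbf S^{\tilde A,\Ad}_{z\gets y\gets x_{(1)}}[b_{(2)},[b_{(1)},b_{(2)}]]$, so the family of point values over $s\in(0,s_{0})$ determines it. The hypothesis $r\neq0$ makes $(x_{(1)},y,z)$ a genuinely broken triple, and I expect it to be exactly the condition under which this leading coefficient does not vanish.

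The step I expect to be the main obstacle is this last delicate calculation: carrying all the scalar factors --- from $\mathscr R$ and from $(\sigma[\Box_{\tilde A}])^{-1}$ at the characteristic intersection, from the half-density and Maslov normalizations, the homogeneity factors $|\kappa_{(k)}|^{q}$, and the Minkowski contractions such as $\pair{\delta_{(kl)},\xi_{(m)}}$, $\delta_{(kl),2}$ and $\xi_{(m),\beta}$ entering the covector coefficients (here $\delta_{(kl)}=\kappa_{(k)}\xi_{(k)}-\kappa_{(l)}\xi_{(l)}$) --- through the full three-fold interaction, and then verifying that after the Jacobi cancellation and the collapse forced by $b_{(2)}=b_{(3)}$ the resulting leading coefficient as $s\to0$ is genuinely nonzero. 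A secondary but nontrivial point is the justification, sketched above, that the temporal gauge transformation $\mathscr T$ does not contaminate the symbol at $(z,\eta)$.
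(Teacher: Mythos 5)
Your overall template --- products of conormal distributions plus symbol transport, the Jacobi identity as the structural Lie-algebraic input, and the $s\to 0$ limit after rescaling --- matches the paper. But two of your preliminary reductions are incorrect, and both matter.

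First, you never account for the compatibility condition $d_V^*J=0$ built into the system (\ref{YM_Lorenz_with_cc}). The source in the one-fold equation is not just $J_{(k),2}\,dx^{2}$: the constraint (\ref{eq_J0}) generates a temporal component $\rho_{(k)}=\p_{\epsilon_{(k)}}J_0|_{\epsilon=0}$ with nonvanishing principal symbol; concretely $\sigma[\rho_{(k)}](x_{(k)},\xi_{(k)})=(-1)^k s\,\sigma[J_{(k),2}](x_{(k)},\xi_{(k)})$ for $k=2,3$. So the effective polarization of $Y_{(k)}$ is $\omega_{(k)}=(-1)^k s\,dx^{0}+dx^{2}$ for $k=2,3$, not $dx^{2}$. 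Your claim that $Y_{(k),0}$ has vanishing principal symbol is therefore wrong for $k=2,3$; it is merely $O(s)$. This $O(s)$ temporal polarization is precisely what produces the coefficients $c_{(kl),\beta}$ and the expansion (\ref{Y123}), so dropping it changes the leading-order algebra.

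Second, and more consequentially, the claim that differentiating $\mathscr T$ contributes only lower-order terms or terms whose wavefront set misses $(z,\eta)$ is false. It is true that $U_{(k)}$ and $U_{(kl)}$ are smooth near $z$ (by the separation conditions), but $U_{(123)}$ satisfies, modulo smooth terms, $\p_t U_{(123)}=-V_{(123),0}$ near $z$, and $V_{(123),0}=Y_{(123),0}$ has a nontrivial principal symbol at $(z,\eta)$ --- indeed (\ref{Y123}) shows $\hat Y_{(123),0}=\hat Y_{(123),1}$ are nonzero at leading order, which is exactly why the transform is read off from the $\beta=0,1$ components. Hence $U_{(123)}$ is singular at $(z,\eta)$, and the correct relation is (\ref{L_Y}), namely
\[
\sigma[\p_{\epsilon_{(1)}}\p_{\epsilon_{(2)}}\p_{\epsilon_{(3)}}L_\beta](z,\eta)
= -\frac{\eta_\beta}{\eta_0}\,\sigma[Y_{(123),0}](z,\eta)+\sigma[Y_{(123),\beta}](z,\eta),
\qquad\beta=1,2,3.
\]
Since the leading orders of $\sigma[Y_{(123),0}]$ and $\sigma[Y_{(123),1}]$ agree, taking $\beta=1$ yields a nonzero multiple $(1+a(r))$ of the desired transport, so the conclusion still follows --- but only after deriving and using this correction; one cannot simply set $\sigma[L]=\sigma[Y_{(123)}]$.

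A smaller misattribution: the purely cubic terms in $\tilde N(3)$ are discarded because they are $O(s)$ under the normalization (\ref{def_eta_xi})--(\ref{xi23}) (their last factor is proportional to $s$ or vanishes), not by a Jacobi cancellation; the Jacobi identity is what cancels the divergent $s^{-1}$ parts of the quadratic--quadratic contributions in (\ref{Y123}). Your claim that the $(\{2,3\},1)$ contribution vanishes "in the limit" because $b_{(2)}=b_{(3)}$ conflates these two mechanisms: that term is $\sim s^{-1}[\tilde b_{(1)},[\tilde b_{(2)},\tilde b_{(3)}]]$ and does not go to zero on its own; its divergent coefficient cancels against those of the other two monomials via Jacobi, before $b_{(2)}=b_{(3)}$ is invoked.
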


As $(x_{(1)},y,z) \in \mathbb S^+(\mho)$ and $b_{(1)}, b_{(2)} \in \mathfrak g$ can be chosen arbitrarily apart from the constraint $r \ne 0$, Proposition \ref{prop_analytic_step} follows from Propositions \ref{prop_source_to_sol} and \ref{prop_muloc} together with
Proposition \ref{prop_nestedcommutators} in Section \ref{sec_trivial_centre} below.
Here the case $r=0$ follows by continuity.

For the convenience of readers who do not wish to enter into theory of Lie algebras,
we have included an elementary alternative to Proposition \ref{prop_nestedcommutators} in the case $\g = \su(n)$, with $n \ge 2$, see Lemma \ref{lem_span_sun} in Appendix \ref{sec_sun}.
This special case is interesting in view of the $\SU(3) \times \SU(2) \times \mathrm{U}(1)$ gauge group of the standard model.

We will proceed to give a proof of Proposition \ref{prop_muloc} in Sections \ref{sec_muloc_reduction}--\ref{sec_prinsymb_gauge}.

\subsection{Microlocal reduction from (\ref{YM_Lorenz_with_cc}) to (\ref{eq_YM_relative_L})}
\label{sec_muloc_reduction}

Let $J_{(k),2}$, $k=1,2,3$, be as in (\ref{the_sources}), and write $J_2 = J_2(\epsilon,s)$ for the function defined by (\ref{def_J2}).
To simplify the notation, we write
$J_j = J_{(k),j} = 0$ for $k=1,2,3$ and $j=1,3$,
and, for the remainder of this section, somewhat abusively $A = \tilde A$ where
$\tilde{A}$ is as in Proposition \ref{prop_source_to_sol}.
Then we denote by
    \begin{align}\label{the_sol}
(W,J_0) = (W(\epsilon),J_0(\epsilon)), \quad \epsilon = (\epsilon_{(1)}, \epsilon_{(2)}, \epsilon_{(3)}),
    \end{align}
the solution of (\ref{YM_Lorenz_with_cc}) with $J_j$, $j=1,2,3$, as above and $\epsilon$ near the origin of $\R^3$.
The derivatives of $W$ with respect to $\epsilon$ are denoted by $Y_{(k)}$, $Y_{(kl)}$ and $Y_{(123)}$ as in (\ref{def_Y}),
and we write also
    \begin{align*}
\rho_{(k)} =  \frac{\partial J_0}{\partial \epsilon_{(k)}}  \bigg|_{\epsilon = 0},
\quad
\rho_{(kl)} =  \frac{\partial^2 J_0}{\partial \epsilon_{(k)}\epsilon_{(l)}}  \bigg|_{\epsilon = 0},
\quad
\rho_{(123)} =  \frac{\partial^3 J_0}{\partial \epsilon_{(1)}\epsilon_{(2)}\epsilon_{(3)}}  \bigg|_{\epsilon = 0}.
    \end{align*}
For notational convenience, we translate the origin in (\ref{YM_Lorenz_with_cc}) so that the initial conditions are given at $t=0$ rather than at $t=-1$.

Recall that the second equation in (\ref{YM_Lorenz_with_cc})
is equivalent with (\ref{eq_J0}).
Differentiating \eqref{eq_J0} with respect to $\epsilon_{(k)}$ for $k=1, 2, 3$ gives
    \begin{align}\label{eq_rhok}
\partial_t \rho_{(k)} + [A_0, \rho_{(k)}] =  \partial^j J_{(k),j} + [A^j , J_{(k),j}].
    \end{align}
Writing
    \begin{align*}
\xi = (\tau, \xi') = (\xi_0, \xi_1, \xi_2, \xi_3) \in T_x^* \R^{1+3},
\quad x = (t,x') =  (x^0,x^1,x^2,x^3)\in \R^{1+3},
    \end{align*}
the operator $\p_t$ is elliptic away from its characteristic set $\{\tau = 0\} \subset T^* \R^{1+3}$.
The wave front set of the right-hand side of (\ref{eq_rhok})
is contained in a small neighbouhood of $\{(x_{(k)}, \lambda \xi_{(k)}) : \lambda \ne 0\}$,
and therefore it is disjoint from $\{\tau = 0\}$.
It follows that $\rho_{(k)} \in I(N^*\{x_{(k)}\}; \mathfrak g)$
since the right-hand side of (\ref{eq_rhok}) is in this class.
Recalling the form of $\xi_{(k)}$, $k=1,2,3$, see (\ref{def_eta_xi}) and (\ref{xi23}), symbol evaluation gives
    \begin{align*}
\sigma[\rho_{(1)}](x_{(1)}, -\xi_{(1)}) = 0,
\quad
\sigma[\rho_{(k)}](x_{(k)}, \xi_{(k)}) = (-1)^k s \sigma[J_{(k),2}](x_{(k)}, \xi_{(k)}), \quad k=2,3.
    \end{align*}
Hence $Y_{(k)}$ solves \eqref{eqn : 1-fold interaction}
with $J_{(k)}$ satisfying
    \begin{align}\label{Jk_symb}
J_{(k)} \in I(N^*\{x_{(k)}\}; T^* \R^{1+3} \otimes \mathfrak g),
\quad \sigma[J_{(k)}](x_{(k)}, \pm \xi_{(k)}) = \tilde \alpha_{(k)} b_{(k)} \omega_{(k)},
    \end{align}
where the sign is that of $\kappa_{(k)}$, $\tilde \alpha_{(k)} = \sigma[\chi_{(k)}](x_{(k)}, \pm \xi_{(k)}) \ne 0$, $b_{(k)}$ is as in (\ref{the_sources}), and
    \begin{align*}
\omega_{(1)} = dx^2,
\quad
\omega_{(k)} = (-1)^k s dx^0 + dx^2,
\quad
k=2,3.
    \end{align*}
It follows that away from $x_{(k)}$,
    \begin{align*}
Y_{(k)} \in I(N^* K_{(k)}; T^* \R^{1+3} \otimes \mathfrak g),
    \end{align*}
where $N^\ast K_{(k)}$ is the bicharacteristic flowout emanating from $(x_{(k)}, \xi_{(k)})$. In other words, writing $x_{(k)} = (t_{(k)}, x_{(k)}')$,
$$
K_{(k)} = \left\{(t_{(k)} + s, x_{(k)}' + s \theta) \in \mathbb{R}^{1+3} : |\theta| = 1, s > 0 \right\}.
$$
Moreover, $\singsupp(Y_{(k)}) \subset \Gamma_{(k)}$.

The second derivative of \eqref{eq_J0} in $\epsilon$ for distinct $k, l =1, 2, 3$ reads
    \begin{align}\label{rho_kl}
\partial_t \rho_{(kl)} + [A_0, \rho_{(kl)}] = - [Y_{(k),0}, \rho_{(l)}] - [Y_{(l),0}, \rho_{(k)}] + [Y^j_{(l)}, J_{(k),j}] + [Y^j_{(k)}, J_{(l),j}].
    \end{align}
As $\supp(J_{(k),j}) \subset \mho_{(k)}$ by ($\chi$2),
it follows from (\ref{eq_rhok}) and $J_0 = 0$ for $t \le 0$ that
$\supp(\rho_{(k)}) \subset \hat \mho_{(k)}$.
We see that $Y_{(k)}$ is smooth in the support of $\rho_{(l)}$
for distinct $k$ and $l$, since $\hat \mho_{(k)} \cap \Gamma_{(l)} = \emptyset$ by ($\chi$3).
Moreover, $Y_{(k)}$ solves \eqref{eqn : 1-fold interaction}
with vanishing initial conditions and with the source satisfying $\supp(J_{(k)}) \subset \hat \mho_{(k)} \subset \mathcal J^+(\mho_{(k)})$,
whence $\supp(Y_{(k)}) \subset \mathcal J^+(\mho_{(k)})$ due to finite speed of propagation (as discussed in the proof of Lemma \ref{lem_Lorenz} finite speed of propagation follows from Lemma \ref{lem_fsop} in Appendix \ref{appendix_energy}). As $\singsupp(\rho_{(l)}) = \{x_{(l)}\}$,
it follows from ($\chi$2) that $\rho_{(l)}$ is smooth in the support of $Y_{(k)}$ for distinct $k$ and $l$.
Analogously, $Y_{(k)}$ is smooth in $\supp(J_{(l)})$
and $J_{(l)}$ is smooth in $\supp(Y_{(k)})$ for $k \ne l$.
Therefore the right-hand side of (\ref{rho_kl}) is smooth,
and so is $\rho_{(kl)}$.
This again implies that $Y_{(kl)}$ satisfies \eqref{eqn : 2-fold interaction}
modulo smooth terms.

The third derivative of \eqref{eq_J0} in $\epsilon$ can be written as
    \begin{align*}
\partial_t \rho_{(123)} + [A_0, \rho_{(123)}]
&=
\frac 1 2 \sum_{\pi \in S_3} \bigg(-[ Y_{(\pi(1)\pi(2)), 0}, \rho_{(\pi(3))}]
- [ Y_{(\pi(1)), 0}, \rho_{(\pi(2)\pi(3))}]
\\&\qquad+
[ Y^j_{(\pi(1)\pi(2))}, J_{(\pi(3)),j}] \bigg).
    \end{align*}
It follows from \cite[Th. 8.2.10]{H1} that, for distinct $k$ and $l$, any $(x,\xi) \in \WF(Y_{(k)}Y_{(l)})$ with lightlike $\xi$
satisfies $(x,\xi) \in \WF(Y_{(j)})$ for $j=k$ or $j=l$.
Then \eqref{eqn : 2-fold interaction} implies that
    \begin{align*}
\singsupp(Y_{(kl)})
\subset \singsupp(Y_{(k)}) \cup \singsupp(Y_{(l)}).
    \end{align*}
Similarly with the above, we see also that $\supp(Y_{(kl)}) \subset \mathcal J^+(\mho_{(k)}) \cup \mathcal J^+(\mho_{(l)})$
and $\supp(\rho_{(kl)}) \subset \hat \mho_{(k)} \cup \hat \mho_{(l)}$
for $k \ne l$.
As above, this implies that $\rho_{(123)}$ is smooth,
and that $Y_{(123)}$ satisfies \eqref{eqn : 3-fold interaction}
modulo smooth terms.

\subsection{Principal symbols of interacting waves}\label{subsec : principal symbol}

The linearized equation \eqref{eqn : 2-fold interaction} has source $\tilde{N}(2)$ that consists of products of solutions $Y_{(k)}$, $k=1,2,3$, to the linear wave equation \eqref{eqn : 1-fold interaction}. These products can be viewed as the interactions of waves $Y_{(k)}$ and $Y_{(l)}$. Then the solution $Y_{(kl)}$ to \eqref{eqn : 2-fold interaction} describes the linear waves emanating from the source of such interacting waves $Y_{(k)}$ and $Y_{(l)}$. Analogously the solution $Y_{(123)}$ to \eqref{eqn : 3-fold interaction} describes waves emanating from interaction of $Y_{(1)}$, $Y_{(2)}$ and $Y_{(3)}$.

As $\xi_{(k)}$, $k=1,2,3$, are linearly independent, the submanifolds $K_{(k)}$, $k=1,2,3$, intersect transversally at $y$,
and we may compute the principal symbols $\sigma[Y_{(123)}](y,\eta)$ using the product formula \eqref{eqn : symbol calculus of conormal}.
This requires using the direct sum decomposition
    \begin{align*}
\eta = \eta_{(1)} + \eta_{(2)} + \eta_{(3)} \in
N^\ast_y  K_{(1)} \oplus N^\ast_y K_{(2)} \oplus N^\ast_y K_{(3)},
    \end{align*}
where $\eta_{(k)} = \kappa_{(k)} \xi_{(k)}$ and the scalars $\kappa_{(k)}$ are given by (\ref{def_kappas}).
We will omit below the details related to the choices of the microlocal cutoff when applying \eqref{eqn : symbol calculus of conormal}. The same choices as in \cite{CLOP} can be used, see (54) there and its proof.

By (\ref{eqn : parallel transport for homogeneous}) the incoming principal symbols satisfy
    \begin{align*}
\sigma[Y_{(k)}](y, \eta_{(k)}) = \alpha_{(k)} |\kappa_{(k)}|^{q-1} \P_{y \gets x_{(k)}}^{A,\Ad} b_{(k)} \omega_{(k)},
    \end{align*}
where the scalar factors $\alpha_{(k)}$ converge in $\C \setminus 0$ as $s \to 0$. The factors $\alpha_{(k)}$ are independent from $A$, and their precise form is not important for our purposes.
We refer to \cite{CLOP} for more detail on how to compute these factors. Let us point out, however, that typically $\alpha_{(k)} \ne \tilde \alpha_{(k)}$, with $\tilde \alpha_{(k)}$ as in (\ref{Jk_symb}), due to a contribution from $\mathscr R$ and $\sigma[\Box_A]^{-1}$ in (\ref{eqn : symbol transport equation at the intersection}).

We use the shorthand notations
    \begin{align}\label{Y_rescaled}
\hat Y_{(j)} &=
(\alpha_{(j)})^{-1} |\kappa_{(j)}|^{1-q} \sigma[Y_{(j)}](y, \eta_{(j)}),
\\\notag
\hat Y_{(kl)} &=
-\imath (\alpha_{(kl)})^{-1} |\kappa_{(k)}\kappa_{(l)}|^{1-q} \sigma[Y_{(kl)}](y, \eta_{(kl)}),
\\\notag
\hat Y_{(123)} &=
-\alpha^{-1} |\kappa_{(1)}\kappa_{(2)}\kappa_{(2)}|^{1-q} \sigma[Y_{(123)}](y, \eta),
    \end{align}
where $\eta_{(kl)} = \eta_{(k)} + \eta_{(l)}$,
$\alpha_{(kl)} = \alpha_{(k)}\alpha_{(l)}$, and $\alpha = \iota \alpha_{(1)}\alpha_{(2)}\alpha_{(3)}$. The constant $\iota \in \mathbb C \setminus 0$ comes from (\ref{eqn : symbol transport equation at the intersection}) and is independent from $A$.
Then
\begin{multline*}
\hat{Y}_{(k l),\beta} =
p^{-1}(y, \eta_{(k l)}) \left( 2 \eta_{(l), \alpha} [\hat{Y}_{(k)}^\alpha,    \hat{Y}_{(l), \beta}  ]  - \eta_{(l), \beta} [\hat{Y}_{(k)}^\alpha,   (\hat{Y}_{(l), \alpha})   ]
\right.\\\left. + 2 \eta_{(k), \alpha} [\hat{Y}_{(l)}^\alpha,    \hat{Y}_{(k), \beta}   ]   - \eta_{(k), \beta} [\hat{Y}_{(l)}^\alpha,   \hat{Y}_{(k), \alpha}   ] \right),
\end{multline*}
where $p(y, \xi) = - \xi_0^2 + \xi_1^2 + \xi_2^2 + \xi_3^2$.
Writing
    \begin{align}\label{two_fold_expansion}
\hat Y_{(kl),\beta} = c_{(kl),\beta} p^{-1}(y, \eta_{(k l)}) [\tilde b_{(k)}, \tilde b_{(l)}],
\quad
\tilde b_{(j)} = \P_{y \gets x_{(j)}}^{A,\Ad} b_{(j)},
    \end{align}
we have
    \begin{align*}
c_{(12),0}&=\kappa_{(1)}+2 \kappa_{(2)} s^2-\kappa_{(2)},\quad
c_{(12),1}=\kappa_{(1)}-a(s) \kappa_{(2)},\quad
c_{(12),2}=2 \kappa_{(1)} s+\kappa_{(2)} s,\\
c_{(13),0}&=\kappa_{(1)}+2 \kappa_{(3)} s^2-\kappa_{(3)},\quad
c_{(13),1}=\kappa_{(1)}-a(s) \kappa_{(3)},\quad
c_{(13),2}=-2 \kappa_{(1)} s-\kappa_{(3)} s,
    \end{align*}
and
    \begin{align*}
c_{(23),0}&=-3 \kappa_{(2)} s^2+\kappa_{(2)}+3 \kappa_{(3)}
s^2-\kappa_{(3)},\\
c_{(23),1}&=a (s) \kappa_{(2)} s^2+a (s) \kappa_{(2)}-a (s)
\kappa_{(3)} s^2-a (s) \kappa_{(3)},\\
c_{(23),2}&=\kappa_{(2)} s^3-3 \kappa_{(2)} s+\kappa_{(3)} s^3-3
\kappa_{(3)} s.
    \end{align*}
Moreover,
    \begin{align*}
p(y, \eta_{(2 3)}) = 2(a(r) + a(s)) (\kappa_{(1)}-1),
\quad
p(y, \eta_{(1 k)}) = 2(a(r) + a(s)) \kappa_{(k)}, \quad
k = 2,3.
    \end{align*}

For our purposes, it is enough to compute the leading order terms with respect to $s$, in the limit $s \to 0$, of the first two 1-form components of $\hat{Y}_{(1 2 3)}$.
The cubic terms
    \begin{align*}
[\hat{Y}_{(\pi(1))}^\alpha, [\hat{Y}_{(\pi(2)), \alpha}, \hat{Y}_{(\pi(3)), \beta}] ], \quad \beta =0,1,
    \end{align*}
are of order $s$. Indeed, if $\beta=1$ then the last factor vanishes, and if $\beta=0$ then the last factor is of order $s$.
Hence for $\beta =0,1$,
\begin{multline*}
  \hat{Y}_{(1 2 3),\beta}  = \frac{1}{2}   \sum_{\pi \in S_3} \left(
2 \eta_{(\pi(3)), \alpha} [\hat{Y}_{(\pi(1)\pi(2))}^\alpha,  \hat{Y}_{(\pi(3)), \beta}]   - \eta_{(\pi(3)), \beta} [\hat{Y}_{(\pi(1)\pi(2))}^\alpha,  \hat{Y}_{(\pi(3)), \alpha}   ]  \right.\\\left. +
2  \eta_{(\pi(2)\pi(3)), \alpha} [\hat{Y}_{(\pi(1))}^\alpha,  \hat{Y}_{(\pi(2)\pi(3)), \beta}]   -  \eta_{(\pi(2)\pi(3)), \beta} [\hat{Y}_{(\pi(1))}^\alpha,  \hat{Y}_{(\pi(2)\pi(3)), \alpha}   ]  \right) + \mathcal O(s).
\end{multline*}
It is in principle straightforward to express $\hat{Y}_{(1 2 3),\beta}$ in terms of $\tilde b_{(j)}$, analogously to (\ref{two_fold_expansion}). We do not reproduce here the details of this long computation, however, we have verified the below expression (\ref{Y123}) using a computer algebra system, and our code is available online \cite{CLOP2020_code}. There holds
    \begin{multline}\label{Y123}
\hat{Y}_{(1 2 3),0}
=
\hat{Y}_{(1 2 3),1}
=
-6 s^{-1} [\tilde b_{(1)}, [\tilde b_{(2)}, \tilde b_{(3)}]]
\\
+\left(6 s^{-1} + \frac{3 r}{1+a(r)}\right)[\tilde b_{(2)}, [\tilde b_{(1)}, \tilde b_{(3)}]]
\\
+\left(-6 s^{-1} + \frac{3 r}{1+a(r)}\right)[\tilde b_{(3)}, [\tilde b_{(1)}, \tilde b_{(2)}]] + \mathcal O(s).
    \end{multline}

The terms of order $s^{-1}$ cancel out due to the Jacobi identity.
Hence
    \begin{align*}
\lim_{s \to 0} \hat{Y}_{(1 2 3),\beta}
= \frac{3 r}{1+a(r)} \lim_{s \to 0}\left([\tilde b_{(2)}, [\tilde b_{(1)}, \tilde b_{(3)}]] + [\tilde b_{(3)}, [\tilde b_{(1)}, \tilde b_{(2)}]] \right), \quad \beta = 0,1.
    \end{align*}
Taking $b_{(3)} = b_{(2)}$ yields
    \begin{align*}
\frac{1+a(r)}{6 r} \lim_{s \to 0} \hat{Y}_{(1 2 3),\beta}
= \lim_{s \to 0} [\tilde b_{(2)}, [\tilde b_{(1)}, \tilde b_{(2)}]] =
\P_{y \gets x_{(1)}}^{A,\Ad} [b_{(2)}, [b_{(1)}, b_{(2)}]], \quad \beta = 0,1,
    \end{align*}
where we used the following simple consequence of the Jacobi identity
    \begin{align}\label{pt_product}
[\P_{y \gets x}^{A, \Ad} b_{(1)}, \P_{y \gets x}^{A, \Ad} b_{(2)}] =
\P_{y \gets x}^{A, \Ad} [b_{(1)}, b_{(2)}], \quad b_{(1)}, b_{(2)} \in \mathfrak g,\ x,y \in \R^{1+3}.
    \end{align}
Indeed, let $W_j$, $j=1,2$, be the solutions of (\ref{adjoint_pt}) with $V=V_j$.
Then the Jacobi identity implies
    \begin{align*}
\p_t [W_1, W_2] &= -[[\pair{A, \dot \gamma}, W_1], W_2] - [W_1, [\pair{A, \dot \gamma}, W_2]]
\\&= [W_2, [\pair{A, \dot \gamma}, W_1]] + [W_1, [W_2, \pair{A, \dot \gamma}]]
= -[\pair{A, \dot \gamma}, [W_1, W_2]].
    \end{align*}
Thus $[W_1, W_2]$ solves (\ref{adjoint_pt}) with $V = [V_1, V_2]$ and (\ref{pt_product}) follows.

We apply (\ref{eqn : parallel transport for homogeneous})
to obtain
    \begin{align}\label{symbol_Y123}
\alpha_{(0)}^{-1} \lim_{s \to 0} \left(c
\sigma[Y_{(1 2 3),\beta}](z,\eta) \right) = \P_{z \gets y}^{A,\Ad} \P_{y \gets x_{(1)}}^{A,\Ad} [b_{(2)}, [b_{(1)}, b_{(2)}]], \quad \beta = 0,1,
    \end{align}
where $c = c(s) = - (1+a(r))(6r\alpha)^{-1} |\kappa_{(1)}\kappa_{(2)}\kappa_{(2)}|^{1-q}$ and $\alpha_{(0)} \in \C \setminus 0$ is independent from $A$.

\subsection{Principal symbol in temporal gauge}
\label{sec_prinsymb_gauge}

To finish the proof of Proposition \ref{prop_muloc}, we show that
for $\beta=1,2,3$,
    \begin{align}\label{L_Y}
\sigma[\p_{\epsilon_{(1)}}\p_{\epsilon_{(2)}}\p_{\epsilon_{(3)}}L_\beta(0, J_{2}(0, s), 0)](z,\eta)
= -\frac{\eta_\beta}{\eta_0} \sigma[Y_{(1 2 3),0}](z,\eta) + \sigma[Y_{(1 2 3),\beta}](z,\eta).
    \end{align}
Indeed, Proposition \ref{prop_muloc} follows from (\ref{symbol_Y123}) and (\ref{L_Y}) with $\beta = 1$.

Recall that $L(0, J_{2}(\epsilon, s), 0)$ is defined by $\mathscr T(V)|_\mho$ where $V = W + A$ and $W$ is as in (\ref{the_sol}).
To simplify the notation, we write
    \begin{align*}
V_{(k)} = \frac{\partial V}{\partial \epsilon_{(k)}}\bigg|_{\epsilon = 0},
\quad
V_{(kl)} = \frac{\partial^2 V}{\partial \epsilon_{(k)}\partial \epsilon_{(l)}}\bigg|_{\epsilon = 0},
\quad
V_{(123)} = \frac{\partial^3 V}{\partial \epsilon_{(1)}\partial \epsilon_{(2)}\partial \epsilon_{(3)}}\bigg|_{\epsilon = 0}.
    \end{align*}
As $A$ is smooth, $\sigma[V_{(123)}](z,\eta) = \sigma[Y_{(1 2 3)}](z,\eta)$. It remains to study how the principal symbol $\sigma[V_{(123)}]$ transforms under passing to the temporal gauge with $\mathscr T$.

Let $\U = \U(\epsilon)$ be as in (\ref{temporal_U}) with $V = V(\epsilon)$, and write
    \begin{align*}
U_{(k)} = \frac{\partial \U}{\partial \epsilon_{(k)}}\bigg|_{\epsilon = 0},
\quad
U_{(kl)} = \frac{\partial^2 \U}{\partial \epsilon_{(k)}\partial \epsilon_{(l)}}\bigg|_{\epsilon = 0},
\quad
U_{(123)} = \frac{\partial^3 \U}{\partial \epsilon_{(1)}\partial \epsilon_{(2)}\partial \epsilon_{(3)}}\bigg|_{\epsilon = 0}.
    \end{align*}
Recall that we are using the notation $A = \tilde A$ where $\tilde A$ is as in Proposition \ref{prop_source_to_sol}. In particular, $A|_\mho$ is in temporal gauge.
This, together with $V|_{\epsilon=0} = A$, implies that
$\U|_{\epsilon = 0} = \id$ in $\mho$.

We will consider $V$ and $\U$ near the point $z \in \mho$.
Recall that $Y_{(k)}$ is singular only in $\Gamma_{(k)}$
and that $Y_{(kl)}$ is singular only in $\Gamma_{(k)} \cup \Gamma_{(l)}$.
Therefore $V_{(k)}$ and $V_{(kl)}$ are smooth near $z$.
Moreover, as $\WF(V_{(k)})$ and $\WF(V_{(kl)})$ are disjoint from the characteristic set $\{\tau = 0\}$ of $\p_t$, the ordinary differential equation in (\ref{temporal_U}) implies that also $U_{(k)}$ and $U_{(kl)}$ are smooth near $z$.

Writing
$$
T = \frac{\partial^3 \mathscr T(V)}{\partial \epsilon_{(1)}\partial \epsilon_{(2)}\partial \epsilon_{(3)}}\bigg|_{\epsilon = 0},
$$
and differentiating (\ref{temporal_U}) in $\epsilon_1$, $\epsilon_2$ and $\epsilon_3$ at $\epsilon = 0$ yields that
\begin{multline*}
T =   d U_{(123)} + U^{-1}_{(123)} A +  A U_{(123)} + V_{(123)} +
\\ \frac 1 2 \sum_{\pi \in S_3} \bigg(   U^{-1}_{(\pi(1) \pi(2))} V_{(\pi(3))}    + U^{-1}_{(\pi(1))} V_{(\pi(2) \pi(3))}    +   V_{(\pi(1) \pi(2))} U_{(\pi(3))}  +   V_{(\pi(1))} U_{(\pi(2) \pi(3))} \\ + U^{-1}_{(\pi(1) \pi(2))} d U_{(\pi(3))} + U^{-1}_{(\pi(1))} d U_{(\pi(2) \pi(3))}  + U^{-1}_{(\pi(1) \pi(2))} A U_{(\pi(3))}  + U^{-1}_{(\pi(1))} A U_{(\pi(2) \pi(3))} \bigg),
\end{multline*}
where $U_{(123)}$ solves
\begin{equation*}
 \p_t U_{(123)} =  - V_{(123),0}  - \frac 1 2 \sum_{\pi \in S_3} \bigg(V_{(\pi(1)\pi(2)),0} U_{(\pi(3))} + V_{(\pi(1)),0} U_{(\pi(2)\pi(3))}\bigg).
\end{equation*}
In addition, $\U^{-1} \U = \id$ implies
$$
U^{-1}_{(123)} + \frac 1 2 \sum_{\pi \in S_3} \bigg(U^{-1}_{(\pi(1)\pi(2))} U_{(\pi(3))} +  U^{-1}_{(\pi(1))} U_{(\pi(2)\pi(3))} \bigg) + U_{(123)} = 0.
$$
Therefore, modulo smooth terms, near $z$ there holds
    \begin{align}\label{dotV}
T  =     dU_{(123)}
- U_{(123)} A + A U_{(123)} + V_{(123)},
\quad \p_t U_{(123)} = -V_{(123),0}.
    \end{align}

Near $z$ it holds that $V_{(123)}$ is a conormal distribution associated to the future flowout of $N^* (K_{(1)} \cap K_{(2)} \cap K_{(3)}) \cap \Sigma$, cf. (\ref{def_flowout}).
We refer to Appendix C of \cite{CLOP} for a precise description of this flowout.
As the flowout is contained in the characteristic set $\Sigma$ of $\Box_A$, it is disjoint from the characteristic set $\{\tau = 0\}$ of $\p_t$. The second equation in (\ref{dotV}) implies that $U_{(123)}$ is a conormal distribution associated to the same flowout near $z$.

We write $\hat X = \sigma[X](z,\eta)$ where $X = T, V_{(123)}, U_{(123)}$.
Then taking principal symbols in (\ref{dotV}) gives for $\beta = 0,1,2,3$,
    \begin{align*}
\hat{T}_\beta = i \eta_\beta \hat{U}_{(123)} + \hat{V}_{(123),\beta},
\quad
i \eta_0 \hat{U}_{(123)} = -\hat{V}_{(123),0}.
    \end{align*}
Solving for $\hat{U}_{(123)}$ in the second equation and substituting in the first one yields (\ref{L_Y}).
This finishes the proof of Proposition \ref{prop_muloc}, and hence also Proposition \ref{prop_analytic_step} is proven.

\section{Lie algebras with trivial centre}
\label{sec_trivial_centre}

The material that follows is quite classical and can be found in many texbooks on Lie algebras.
We start by defining notations and recalling basic results following mainly the exposition from \cite[Chapter 7]{Hall}.

Let $\g$ be the Lie algebra of a compact connected Lie group of matrices $G$ and let $\gc$ be its complexification. An element $Z\in \gc$ can be uniquely written as $Z=X+iY$ for $X,Y\in \g$,
and we define $Z^*=-X+iY$.
Note that $Z^*$ is the usual conjugate transpose of $Z$ in the case $\g = \mathfrak{u}(n)$.
There is an inner product on $\gc$ that is real-valued on $\g$ and that satisfies, see \cite[Proposition 7.4]{Hall},
\[\langle\text{ad}_{Z}(X),Y\rangle=\langle X, \text{ad}_{Z^*}(Y)\rangle, \quad X,Y,Z \in \gc.\]
If $\t$ is a maximal commutative subalgebra of $\g$, then
\[\h=\t+i\t\]
is a Cartan subalgebra of $\gc$ and its dimension is called the rank of $\gc$.
The roots of $\gc$ relative to $\h$ are those elements $\alpha\in\h$ such that
there is $0\neq X\in \gc$ so that
    \begin{align}\label{eigenvector}
[H,X]=\langle \alpha,H\rangle X, \quad \text{for all $H \in \h$},
    \end{align}
where we use the convention that the inner product is linear in the second variable (and anti-linear in the first one).  We let $\Delta$ be the collection of roots. By \cite[Proposition 7.15]{Hall} each root $\alpha$ belongs to $i\t$ and that we can decompose $\gc$ as a direct sum
\[\gc=\h\oplus \bigoplus_{\alpha\in\Delta}\g_{\alpha}\]
where $\g_{\alpha}$ contains the eigenvectors associated to $\alpha$, that is, the vectors $X$ satisfying (\ref{eigenvector}). Moreover, see \cite[Proposition 7.18, Theorems 7.19 and 7.23]{Hall},
\begin{enumerate}
\item each $\g_{\alpha}$ is 1-dimensional;
\item if $X\in \g_{\alpha}$ with $\alpha\in\Delta$, then $X^*\in\g_{-\alpha}$;
\item if $\gc$ has trivial center, the roots span $\h$.
\end{enumerate}

We can in fact pick linearly independent elements $X_{\alpha}\in \g_{\alpha}$ , $Y_{\alpha}=X^*_{\alpha}\in \g_{-\alpha}$ and $H_{\alpha}\in\h$
such that $H_{\alpha}$ is a multiple of $\alpha$ and such that $[X_{\alpha},Y_{\alpha}]=H_{\alpha}$, $[H_{\alpha},X_{\alpha}]=2X_{\alpha}$ and $[H_{\alpha},Y_{\alpha}]=-2Y_{\alpha}$. This generates an $\mathfrak{sl}(2,\C)$-subalgebra inside
$\gc$ and implies that the elements
\[E^{\alpha}_{1}:=\frac{i}{2}H_{\alpha};\,\,\;E^{\alpha}_{2}=\frac{i}{2}(X_{\alpha}+Y_{\alpha});\,\,\;E^{\alpha}_{3}=\frac{i}{2}(Y_{\alpha}-X_{\alpha})\]
belong to $\g$ and span a Lie subalgebra isomorphic to $\mathfrak{su}(2)$, see \cite[Corollary 7.20]{Hall}. Note that the set $\{E_{\alpha}^{1}, E^{2}_{\alpha}, E^{3}_{\alpha}\}_{\alpha\in\Delta}$ spans $\g$ over the reals if $\g$ has trivial centre.
The commutation relations of Pauli matrices imply that $\mathfrak{su}(2)$ is spanned by the nested commutators $[X,[X,Y]]$ with $X, Y \in \su(2)$. Hence the discussion above
immediately implies:

\begin{proposition} Let $\g$ be the Lie algebra of a compact connected Lie group of matrices. Assume that $\g$ has trivial centre. Then $\g$ is the linear span of $[X,[X,Y]]$ for $X,Y\in\g$.
\label{prop_nestedcommutators}
\end{proposition}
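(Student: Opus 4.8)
The plan is to reduce the statement to the rank-one case $\su(2)$ via the root space decomposition recalled above. First I would transfer the hypothesis from $\g$ to $\gc$: writing $z = X + iY$ with $X,Y \in \g$, the identity $[z,a] = [X,a] + i[Y,a]$ for $a \in \g$ shows, by separating the $\g$-components, that $z$ is central in $\gc$ if and only if both $X$ and $Y$ are central in $\g$; hence $Z(\g) = 0$ forces $Z(\gc) = 0$. Property (3) then applies, the roots span $\h$, and, as already noted in the text, the family $\{E^\alpha_1, E^\alpha_2, E^\alpha_3\}_{\alpha \in \Delta}$ spans $\g$ over $\R$. It therefore suffices to prove that each $E^\alpha_i$ lies in the subspace $\mathcal C := \mathrm{span}_\R\{[X,[X,Y]] : X,Y \in \g\}$.

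For a fixed $\alpha \in \Delta$, set $\mathfrak k_\alpha = \mathrm{span}_\R\{E^\alpha_1, E^\alpha_2, E^\alpha_3\} \subseteq \g$; by \cite[Corollary 7.20]{Hall} this is a subalgebra isomorphic to $\su(2)$. The only genuine computation is the rank-one fact that $\su(2)$ is spanned by double commutators $[X,[X,Y]]$ with $X,Y \in \su(2)$. Using the basis $e_k = \tfrac{i}{2}\sigma_k$ of $\su(2)$ in terms of the Pauli matrices, the relations $[\sigma_j,\sigma_k] = 2i\sum_l \varepsilon_{jkl}\sigma_l$ give $[e_j,e_k] = -\sum_l \varepsilon_{jkl} e_l$, so that $[e_1,[e_1,e_2]] = [e_1,-e_3] = -e_2$, and cyclically each $e_k$ is realized (up to sign) as such a double commutator; hence $\su(2) = \mathrm{span}_\R\{[X,[X,Y]] : X,Y \in \su(2)\}$. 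Transporting this through a Lie algebra isomorphism $\su(2) \xrightarrow{\sim} \mathfrak k_\alpha$ (which commutes with brackets, hence with the operation $(X,Y) \mapsto [X,[X,Y]]$) yields $\mathfrak k_\alpha = \mathrm{span}_\R\{[X,[X,Y]] : X,Y \in \mathfrak k_\alpha\}$.

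Since $\mathfrak k_\alpha \subseteq \g$, every double commutator $[X,[X,Y]]$ with $X,Y \in \mathfrak k_\alpha$ is among the spanning set defining $\mathcal C$, so $\mathfrak k_\alpha \subseteq \mathcal C$; in particular $E^\alpha_i \in \mathcal C$ for all $i$ and all $\alpha \in \Delta$. As these span $\g$, we get $\g \subseteq \mathcal C$, and the reverse inclusion is immediate, so $\g = \mathcal C$. I do not expect a real obstacle here, since all the heavy lifting is done by the root-space structure theory recalled above; the only points needing a line of care are the transfer of the trivial-centre hypothesis from $\g$ to $\gc$ and the observation that the double commutators reconstructing $E^\alpha_i$ can be chosen with both arguments inside $\g$, which is automatic from $\mathfrak k_\alpha \subseteq \g$.
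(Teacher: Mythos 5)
Your proof is correct and follows essentially the same route as the paper: use the root space decomposition to produce a family of $\su(2)$-subalgebras whose generators span $\g$ when the centre is trivial, and then observe (via Pauli matrix relations) that $\su(2)$ itself is spanned by double commutators. You also helpfully spell out two points the paper leaves implicit, namely that $Z(\g)=0$ forces $Z(\gc)=0$ and that the double commutators realizing the $E^\alpha_i$ can be taken with both arguments in $\g$ since $\mathfrak k_\alpha \subseteq \g$.
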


\section{The case of general Lie group}
\label{sec_general_G}
 Suppose now $G$ is any compact connected Lie group. In what follows
it is convenient to express some previous notions in slightly more abstract form.
Let $\omega\in \Omega^{1}(G,\g)$ be the (left) Maurer-Cartan 1-form of $G$. Given $\U\in G^{0}(\mathbb{D},p)$ we express
the gauge equivalence between $A,B\in \Omega^{1}(M,\g)$ as
\begin{equation}
\U^*\omega+\text{Ad}_{\U^{-1}}(A)=B,
\label{eq:ge}
\end{equation}
where $\text{Ad}:G\to GL(\g)$ is the usual Adjoint representation. For matrix Lie groups $\omega=g^{-1}dg$
and $\text{Ad}_{g}(a)=gag^{-1}$ for $a\in\g$ and we recover the expression (\ref{gauge_equiv}) for the gauge equivalence between
$A$ and $B$ that we have used so far.

Suppose now that $p:\widetilde{G}\to G$ is a covering of $G$, then $p$ is a Lie group homomorphism and
$p^*\omega_{G}=\omega_{\widetilde{G}}$. Given $\U\in G^{0}(\mathbb{D},p)$, there is a unique
$\widetilde{\U}\in \widetilde{G}^{0}(\mathbb{D},p)$ such that $p\circ \widetilde{\U}=\U$.
This is because the domain of $\U$ is simply connected and we are fixing the value of $\U$ at $p$ to be the identity. We deduce that \eqref{eq:ge} holds if and only if the following equation holds
\[\widetilde{\U}^*\omega_{\widetilde{G}}+\text{Ad}_{\widetilde{\U}^{-1}}(A)=B.\]
In other words, $A$ and $B$ are gauge equivalent via a gauge in $G^{0}(\mathbb{D},p)$ if and only if they are gauge
equivalent via a gauge in $\widetilde{G}^{0}(\mathbb{D},p)$. The same observation applies for gauges defined
near $\partial^{-}\mathbb{D}$. One very useful consequence is that the data seta $\mathcal D_{A}$ does
not really depend on the group $G$ as long as it has Lie algebra $\g$.

We are going to use this set up as follows. Every compact connected Lie group $G$ admits a finite cover
of the form $\mathbb{T}^r\times G_{1}$, where $\mathbb{T}^r$ is an $r$-torus and $G_{1}$ is a compact Lie group
with finite centre \cite[Theorem 8.1, p. 233]{Broecker1985}. At the level of the Lie algebra this corresponds to an orthogonal splitting $\g=\mathfrak{z}\oplus \g_{1}$, where $\g_{1}$ is the Lie algebra of $G_{1}$ and it has no centre. Given $A\in \Omega^{1}(M,\g)$ we split uniquely
\[A=A_{Z}+A_{1}\in \mathfrak{z}\oplus\g_{1}.\]

Now we claim:

\begin{lemma} Let $A,B\in \Omega^{1}(M,\g)$. Then $\mathcal D_{A}=\mathcal D_{B}$ iff  $\mathcal D_{A_{Z}}=\mathcal D_{B_{Z}}$ and  $\mathcal D_{A_{1}}=\mathcal D_{B_{1}}$.
\label{lemma:split}
\end{lemma}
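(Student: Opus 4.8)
The plan is to exploit the fact that the orthogonal splitting $\g=\mathfrak{z}\oplus\g_{1}$ is a splitting into ideals with $[\mathfrak{z},\g]=0$, so that the whole Yang--Mills structure decouples along it. Writing any $V\in C^{3}(\mathbb D;T^{\ast}\mathbb D\otimes\g)$ as $V=V_{Z}+V_{1}$ with $V_{Z}$ valued in $\mathfrak{z}$ and $V_{1}$ in $\g_{1}$, a direct computation using $[\mathfrak{z},\cdot]=0$ gives $F_{V}=dV_{Z}+F_{V_{1}}$, where $dV_{Z}$ is $\mathfrak{z}$-valued and $F_{V_{1}}=dV_{1}+V_{1}\wedge V_{1}$ is $\g_{1}$-valued, and likewise $d_{V}^{\ast}F_{V}=d^{\ast}(dV_{Z})+d_{V_{1}}^{\ast}F_{V_{1}}$, where $d^{\ast}=\star d\star$ is the ordinary codifferential. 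Hence $d_{V}^{\ast}F_{V}=0$ on a subset of $\mathbb D$ is equivalent to the pair of equations $d^{\ast}(dV_{Z})=0$ and $d_{V_{1}}^{\ast}F_{V_{1}}=0$ there (and similarly with a source $J=J_{Z}+J_{1}$). For the gauge part I would first invoke the covering remark above to assume $G=\mathbb{T}^{r}\times G_{1}$; then a pointed gauge $\U=(\U_{Z},\U_{1})\in G^{0}(\mathbb D,p)$ acts componentwise, $\U\cdot V=(\U_{Z}\cdot V_{Z})+(\U_{1}\cdot V_{1})$, because $\Ad$ of the $\mathbb{T}^{r}$-factor is trivial on $\g$ and $\U^{-1}d\U$ splits along $\mathfrak{z}\oplus\g_{1}$. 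Consequently, for two connections, $A\sim B$ near $\p^{-}\mathbb D$ (or in $\mathbb D$) holds if and only if $A_{Z}\sim B_{Z}$ and $A_{1}\sim B_{1}$ in the corresponding senses with structure groups $\mathbb{T}^{r}$ and $G_{1}$.

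The main step is then the factorisation of the data set. Identifying $\Omega^{1}(\mho;\g)$ with $\Omega^{1}(\mho;\mathfrak{z})\oplus\Omega^{1}(\mho;\g_{1})$, I claim $\mathcal D_{A}=\{v_{Z}+v_{1}:v_{Z}\in\mathcal D_{A_{Z}},\ v_{1}\in\mathcal D_{A_{1}}\}$, where $\mathcal D_{A_{Z}}$ is the data set of $A_{Z}$ for the structure group $\mathbb{T}^{r}$ and $\mathcal D_{A_{1}}$ that of $A_{1}$ for $G_{1}$. For the inclusion $\subseteq$: given $v\in\mathcal D_{A}$ realised by some $V$, split $V=V_{Z}+V_{1}$; by the decoupling above $V_{Z}$ and $V_{1}$ are admissible competitors for $\mathcal D_{A_{Z}}$ and $\mathcal D_{A_{1}}$ respectively, so $V_{Z}|_{\mho}\in\mathcal D_{A_{Z}}$, $V_{1}|_{\mho}\in\mathcal D_{A_{1}}$, and $v=V_{Z}|_{\mho}+V_{1}|_{\mho}$. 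For $\supseteq$: given $v_{Z}\in\mathcal D_{A_{Z}}$ realised by $V_{Z}$ and $v_{1}\in\mathcal D_{A_{1}}$ realised by $V_{1}$, the sum $V=V_{Z}+V_{1}$ is an admissible competitor for $\mathcal D_{A}$ (using again the decoupling, and the covering remark to turn the $(\mathbb{T}^{r}\times G_{1})$-gauge on a neighbourhood of $\p^{-}\mathbb D$ into a $G$-gauge), with $V|_{\mho}=v_{Z}+v_{1}$. Since the sum is direct, $\mathcal D_{A}$ is literally the product set $\mathcal D_{A_{Z}}\times\mathcal D_{A_{1}}$ inside $\Omega^{1}(\mho;\mathfrak{z})\oplus\Omega^{1}(\mho;\g_{1})$.

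To conclude, note that $\mathcal D_{A}$ and $\mathcal D_{B}$ being defined means $A$ and $B$ solve the Yang--Mills equations in $\mathbb D$, hence so do $A_{Z},A_{1},B_{Z},B_{1}$ in the respective senses, and therefore the four sets $\mathcal D_{A_{Z}},\mathcal D_{A_{1}},\mathcal D_{B_{Z}},\mathcal D_{B_{1}}$ are non-empty (each contains the restriction to $\mho$ of the corresponding connection, via the trivial gauge). For non-empty sets, $S_{Z}\times S_{1}=S_{Z}'\times S_{1}'$ if and only if $S_{Z}=S_{Z}'$ and $S_{1}=S_{1}'$, by projecting onto the two factors; applying this with $S_{Z}=\mathcal D_{A_{Z}}$, $S_{1}=\mathcal D_{A_{1}}$, $S_{Z}'=\mathcal D_{B_{Z}}$, $S_{1}'=\mathcal D_{B_{1}}$ gives the lemma. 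The one point that needs genuine care — the main obstacle — is the decoupling of the gauge equivalence: one must know that the pointed gauge group of the cover $\mathbb{T}^{r}\times G_{1}$ acts componentwise on $\g=\mathfrak{z}\oplus\g_{1}$ and that passing to this cover does not change the data set, both of which are supplied by the discussion preceding the lemma. Everything else is a direct, if slightly lengthy, verification that the Yang--Mills operator and the curvature respect the orthogonal splitting.
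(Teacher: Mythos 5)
Your proof is correct and takes essentially the same approach as the paper: decouple the curvature and the Yang--Mills operator along $\g = \mathfrak{z} \oplus \g_1$ using centrality of $\mathfrak{z}$, then decouple the gauge equivalence via the cover $\mathbb{T}^r \times G_1 \to G$. Your explicit identification of $\mathcal D_A$ with the product $\mathcal D_{A_Z} \times \mathcal D_{A_1}$, together with the observation that equal nonempty products have equal factors, merely makes precise the step the paper compresses into ``the lemma follows.''
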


\begin{proof} Using that elements in the centre $\mathfrak{z}$ commute with everything, a quick calculation shows that given $V\in C^{3}(\mathbb{D};T^*\mathbb{D}\otimes\g)$ with $V=V_{Z}+V_{1}$ we can write the curvature of $V$ as
\[F_{V}=F_{V_{1}}+dV_{Z}\]
since $d_{V}=d_{V_{1}}$. Hence
\[d^{*}_{V}F_{V}=d_{V_{1}}^{*}(F_{V_{1}}+dV_{Z})=d^{*}_{V_{1}}F_{V_{1}}+d^{*}_{V_{1}}dV_{Z}.\]
Again using commutativity, $d_{V_{1}}^*dV_{Z}=d^*dV_{Z}$ since $dV_{Z}$ is also in the centre.
Hence
\begin{equation*}
d_{V}^*F_{V}=d^*dV_{Z}+d^{*}_{V_{1}}F_{V_{1}}\in \mathfrak{z}\oplus \g_{1}.
\end{equation*}
This implies that $d_{V}^*F_{V}=0$ in $\mathcal D\setminus\mho$ iff $d^*dV_{Z}=d^{*}_{V_{1}}F_{V_{1}}=0$
in $\mathcal D\setminus\mho$ and the lemma follows.
\end{proof}

We can deal with the abelian component $A_{Z}$ directly by unique continuation.

\begin{lemma}\label{lemma:uniquecontinuation}
If $\mathcal D_{A_{Z}}=\mathcal D_{B_{Z}}$, then there is $u\in C^{\infty}(\mathbb{D};\mathbb{T}^{r})$ with $u(p)=\text{\rm id}$ such that
\[B_{Z}=A_{Z}+u^{-1}du.\]
\end{lemma}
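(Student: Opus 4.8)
The plan is to reduce the lemma to the assertion that $F_{A_Z}=F_{B_Z}$ in $\mathbb D$. Since $\mathbb D$ is contractible, a closed $\mathfrak z$-valued $1$-form on it is exact; so if $d(B_Z-A_Z)=0$ we may write $B_Z-A_Z=d\phi$ with $\phi\in C^\infty(\mathbb D;\mathfrak z)$ and $\phi(p)=0$, and then $u=\exp\phi\in C^\infty(\mathbb D;\mathbb T^r)$ satisfies $u(p)=\id$ and $u^{-1}du=d\phi=B_Z-A_Z$; indeed on the abelian group $\mathbb T^r$ equation \eqref{eq:ge} reads $u^*\omega+A=B$, and $u^*\omega=d\phi$ when $u=\exp\phi$. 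Thus it suffices to prove $d(B_Z-A_Z)=0$ in $\mathbb D$.

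First I would note that, as in the computation in the proof of Lemma \ref{lemma:split}, the hypothesis that $A$ and $B$ solve the Yang--Mills equations in $\mathbb D$ forces $d^*dA_Z=0$ and $d^*dB_Z=0$ in $\mathbb D$; in particular $A_Z|_\mho\in\mathcal D_{A_Z}=\mathcal D_{B_Z}$, since $A_Z$ is gauge equivalent to itself near $\p^-\mathbb D$. Hence there is $V\in C^3(\mathbb D;T^*\mathbb D\otimes\mathfrak z)$ with $V|_\mho=A_Z|_\mho$, with $d_V^*F_V=d^*dV=0$ in $\mathbb D\setminus\mho$, and with $V\sim B_Z$ near $\p^-\mathbb D$.

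Next I would study $G:=F_V-F_{A_Z}=d(V-A_Z)$. It is exact, so $dG=0$ in $\mathbb D$; moreover $d^*G=d^*dV-d^*dA_Z=0$ in $\mathbb D\setminus\mho$, and $G=0$ in $\mho$ because $V=A_Z$ there. Since the Hodge Laplacian $dd^*+d^*d$ acts as the scalar wave operator $\Box$ on each Cartesian component of a form in flat Minkowski space, every component of $G$ is a $C^2$ solution of $\Box(\cdot)=0$ in $\mathbb D\setminus\mho$ that vanishes on the open set $\mho=\mathbb D\cap\{|x|<\epsilon_0\}$. The key step is now a unique continuation argument: the hypersurfaces $\{|x|=c\}$ with $\epsilon_0\le c<1$ are timelike, hence non-characteristic for $\Box$, and together with $\mho$ they foliate $\mathbb D\cap\{|x|<1\}$; applying the classical Holmgren uniqueness theorem across them (the coefficients of $\Box$ being constant, hence analytic), together with a continuity argument in $c$, propagates the vanishing of $G$ from $\mho$ to all of $\mathbb D\cap\{|x|<1\}$. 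Since $\mathbb D\cap\{|x|=1\}$ is nowhere dense and $G$ is continuous, $G=0$ in $\mathbb D$, i.e. $F_V=F_{A_Z}$ in $\mathbb D$.

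Finally, a gauge transformation leaves the curvature unchanged, so $V\sim B_Z$ near $\p^-\mathbb D$ gives $F_V=F_{B_Z}$ there, whence $F_{A_Z}=F_{B_Z}$ near $\p^-\mathbb D$. Both $F_{A_Z}$ and $F_{B_Z}$ satisfy the source-free Maxwell system $d(\cdot)=0$, $d^*(\cdot)=0$ in $\mathbb D$, so each Cartesian component of $F_{A_Z}-F_{B_Z}$ solves $\Box(\cdot)=0$ in $\mathbb D$ and vanishes near $\p^-\mathbb D$; by finite speed of propagation (Lemma \ref{lem_fsop} in Appendix \ref{appendix_energy}, exactly as in the proof of Lemma \ref{lem_Lorenz}; alternatively one may invoke Proposition \ref{prop_abstract_uniq_pointed}) it vanishes in all of $\mathbb D$, so $F_{A_Z}=F_{B_Z}$ in $\mathbb D$. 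I expect the unique continuation step to be the main obstacle: it is available here only because the abelian Yang--Mills equation has constant (hence analytic) coefficients, so that Holmgren's theorem applies along the timelike foliation $\{|x|=c\}$ — precisely the ingredient missing in the non-abelian problem, which is what necessitates the nonlinear-interaction analysis in the rest of the paper.
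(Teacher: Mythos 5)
Your proof is correct and follows essentially the same route as the paper's: abelianize to Maxwell $d^*F=0$, note that together with $dF=0$ the curvature solves the constant-coefficient wave equation $\Box F=0$, apply Holmgren's unique continuation from $\mho$ to get $F_{A_Z}=F_V$ in $\mathbb D$, and convert the curvature identity into a pointed gauge equivalence using simple connectivity of $\mathbb D$. The only differences are cosmetic: you spell out the timelike-cylinder foliation behind the Holmgren step, and at the end you propagate $F_{A_Z}=F_{B_Z}$ from near $\p^-\mathbb D$ to all of $\mathbb D$ by finite speed of propagation rather than by the paper's route through $A\sim V$ in $\mathbb D$ and Proposition \ref{prop_abstract_uniq_pointed} (which you also note as an alternative).
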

\begin{proof} It suffices to prove the claim for $r=1$, i.e. in the case of the circle $S^{1}$.
To avoid cluttering the notation we drop the subscript $``Z"$ during the proof.
If the group is abelian, the Yang--Mills equations reduces to the Maxwell equation $d^*F_{A}=0$, where $F_{A}=dA$. Since $dF_{A}=0$, the curvature satisfies $\Box F_{A}=0$, where $\Box=d^*d+dd^*$.
The gauges $u\in C^{\infty}(\mathbb{D};S^{1})$ all have the form $u=e^{i\phi}$ for $\phi$ a real-valued function since $\mathbb{D}$ is simply connected.

Since $A\in \mathcal D_{A}=\mathcal D_{B}$, there is $V$ with $d^*F_{V}=0$ in $\mathbb D\setminus\mho$, $V\sim B$ near $\p^- \mathbb D$ and $A|_{\mho}=V|_{\mho}$. Thus $d^*F_{V}=0$ in $\mathbb D$.
It follows that $\Box(F_{A}-F_{V})=0$ in $\mathbb{D}$ and $F_{A}=F_{V}$ in $\mho$ and by Holmgren's unique continuation principle, $F_{A}=F_{V}$ in $\mathbb{D}$, i.e. $d(A-V)=0$.
Since $\mathbb{D}$ is simply connected, $A$ and $V$ are gauge equivalent in $\mathbb{D}$.
But since $V\sim B$ near $\p^- \mathbb D$, it follows that $A$ and $B$ are gauge equivalent near  $\p^- \mathbb D$.
Proposition \ref{prop_abstract_uniq_pointed} implies now that $A$ and $B$ are gauge equivalent in the whole $\mathbb D$.
\end{proof}

We are now ready to prove our main result.

\begin{proof}[Proof of Theorem \ref{th_main}] We consider the finite cover $\mathbb{T}^r\times G_{1}$ of $G$ as above.
By Lemma~\ref{lemma:split} we know that $\mathcal D_{A_{Z}}=\mathcal D_{B_{Z}}$ and  $\mathcal D_{A_{1}}=\mathcal D_{B_{1}}$.  Let $u$ be the gauge from Lemma \ref{lemma:uniquecontinuation}.
We have already proven Theorem \ref{th_main}
in the case that $G = G_{1}$, since it has finite centre. Thus there is $\U\in G^{0}_{1}(\mathbb{D},p)$ so that $A_{1}$ and $B_{1}$ are gauge equivalent via $\U$. Finally, $p\circ (u,\U)\in G^{0}(\mathbb{D},p)$ gives a gauge equivalence between $A$ and $B$ as desired.
\end{proof}

\begin{appendix}

\section{Elementary computations}
\label{appendix_computations}

\subsection{The Hodge star operator on Minkowski space \texorpdfstring{$\mathbb{R}^{1+3}$}{}}

In this section we use the Cartesian coordinates $x^0, \dots,x^3$ on $\mathbb{R}^{1+3}$ and write $\pair{\cdot, \cdot}$ for the Minkowski metric with the signature $(- + + +)$.
We define also $\vol = dx^0 \wedge \dots \wedge dx^3$.

\begin{definition}\label{def_star}
 The Hodge star operator $\star$ for any forms $\omega$ and $\eta$
of the same degree is the linear map defined by
$\omega \wedge (\star \eta)
= \langle \omega, \eta \rangle  \vol$
where $\langle \omega, \eta \rangle = \det(\langle\omega_j, \eta_k\rangle)$ if $\omega = \omega_1 \wedge \cdots \wedge \omega_r$ and $\eta = \eta_1 \wedge \cdots \wedge \eta_r$ for some 1-forms $\omega_j$ and $\eta_j$.\end{definition}

In order to express the Yang--Mills equations and their linearizations in local coordinates, we will need the following lemma:
\begin{lemma}
Writing $g^{\alpha \beta} = \pair{dx^\alpha, dx^\beta}$
there holds
    \begin{align}\label{star1}
\star (dx^\alpha \wedge \star dx^\beta)
&= -g^{\alpha \beta},
\\\label{star2}
\star (dx^p \wedge \star (dx^\alpha \wedge dx^\beta) )
&=  g^{p\beta} dx^\alpha - g^{p\alpha} dx^\beta.
    \end{align}
In (\ref{star2}) it is assumed that $\alpha \ne \beta$.
\end{lemma}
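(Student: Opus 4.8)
The plan is to deduce both formulas directly from Definition~\ref{def_star}, using only the two elementary facts $\star\vol=-1$ and $\star\star=\mathrm{id}$ on $3$-forms. For (\ref{star1}), I would apply Definition~\ref{def_star} with $\omega=dx^\alpha$ and $\eta=dx^\beta$ to get $dx^\alpha\wedge\star dx^\beta=\langle dx^\alpha,dx^\beta\rangle\vol=g^{\alpha\beta}\vol$, and apply it with $\omega=\eta=\vol$ to get $\vol\wedge\star\vol=\langle\vol,\vol\rangle\vol$; since $\star\vol$ is a scalar and $\langle\vol,\vol\rangle=\det(\langle dx^i,dx^j\rangle)_{i,j=0}^{3}=\det\mathrm{diag}(-1,1,1,1)=-1$, this forces $\star\vol=-1$. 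Hence $\star(dx^\alpha\wedge\star dx^\beta)=\star(g^{\alpha\beta}\vol)=g^{\alpha\beta}\star\vol=-g^{\alpha\beta}$, which is (\ref{star1}); note that no fact about $\star\star$ is needed here.

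For (\ref{star2}), set $\Theta=\star\big(dx^p\wedge\star(dx^\alpha\wedge dx^\beta)\big)$, which is a $1$-form. Since $\langle\cdot,\cdot\rangle$ is non-degenerate on $1$-forms, it suffices to compute $\langle dx^\gamma,\Theta\rangle$ for each $\gamma$ and check that it agrees with $\langle dx^\gamma,g^{p\beta}dx^\alpha-g^{p\alpha}dx^\beta\rangle=g^{p\beta}g^{\gamma\alpha}-g^{p\alpha}g^{\gamma\beta}$. Using Definition~\ref{def_star} (with $\omega=dx^\gamma$, $\eta=\Theta$) together with $\star\Theta=\star\star\big(dx^p\wedge\star(dx^\alpha\wedge dx^\beta)\big)=dx^p\wedge\star(dx^\alpha\wedge dx^\beta)$ --- the latter being $\star\star=\mathrm{id}$ applied to the $3$-form $dx^p\wedge\star(dx^\alpha\wedge dx^\beta)$, which one reads off at once from Definition~\ref{def_star} on the standard basis of $3$-forms --- one obtains
\[
\langle dx^\gamma,\Theta\rangle\,\vol=dx^\gamma\wedge\star\Theta=(dx^\gamma\wedge dx^p)\wedge\star(dx^\alpha\wedge dx^\beta)=\langle dx^\gamma\wedge dx^p,\,dx^\alpha\wedge dx^\beta\rangle\,\vol,
\]
where the last equality is Definition~\ref{def_star} for $2$-forms. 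Expanding the determinant gives $\langle dx^\gamma\wedge dx^p,dx^\alpha\wedge dx^\beta\rangle=g^{\gamma\alpha}g^{p\beta}-g^{\gamma\beta}g^{p\alpha}$, which equals $g^{p\beta}g^{\gamma\alpha}-g^{p\alpha}g^{\gamma\beta}$ by symmetry of $g$. As this holds for every $\gamma$, non-degeneracy of $\langle\cdot,\cdot\rangle$ gives (\ref{star2}).

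The argument is entirely routine, and there is no real obstacle; the only point demanding care is the sign bookkeeping, which here is cleanly isolated into the two statements $\star\vol=-1$ and $\star\star=\mathrm{id}$ on $3$-forms, both traceable to the single minus sign in the signature $(-+++)$. The hypothesis $\alpha\ne\beta$ in (\ref{star2}) serves only to make $dx^\alpha\wedge dx^\beta$ a nonzero basis element and is otherwise irrelevant, since both sides of (\ref{star2}) vanish when $\alpha=\beta$. A more pedestrian alternative would be to compute $\star dx^\mu$ and $\star(dx^\mu\wedge dx^\nu)$ explicitly on the coordinate basis straight from Definition~\ref{def_star} and then substitute; this requires no auxiliary fact at all, at the cost of a short case analysis, whereas the route above confines all sign issues to the two clean statements just mentioned.
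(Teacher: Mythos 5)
Your proof of (\ref{star1}) is essentially the same as the paper's: both evaluate $\star\vol=-1$ from Definition~\ref{def_star} with $\omega=\eta=\vol$ and then reduce the left-hand side to $g^{\alpha\beta}\star\vol$. For (\ref{star2}) you take a genuinely different route. The paper fixes a complementary pair of indices $j,k$, introduces signs $\epsilon$, $c$, $c'$, argues separately for $p\ne\alpha,\beta$, $p=\alpha$, $p=\beta$, and collapses the sign bookkeeping into the identity $cc'=-g^{\alpha\alpha}$. You instead use non-degeneracy of the Minkowski pairing on $1$-forms to reduce to evaluating $\langle dx^\gamma,\Theta\rangle$ for each $\gamma$, and then collapse that to the $2\times 2$ Gram determinant
\[
\langle dx^\gamma\wedge dx^p,\,dx^\alpha\wedge dx^\beta\rangle = g^{\gamma\alpha}g^{p\beta}-g^{\gamma\beta}g^{p\alpha}
\]
via $\star\star=\mathrm{id}$ on $3$-forms and Definition~\ref{def_star} applied to $2$-forms. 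Both are correct. What your route buys you is the avoidance of any explicit choice of complementary indices and the accompanying case analysis; the entire sign content is isolated in the single statement $\star\star=\mathrm{id}$ on $3$-forms (which indeed follows from $(-1)^{k(n-k)}\det g = (-1)^{3\cdot 1}\cdot(-1)=+1$ at $k=3$, $n=4$, or from a direct check on a basis $3$-form as you indicate). The paper's route is more elementary in that it never leaves Definition~\ref{def_star}, at the cost of a short case split. You are also right that the hypothesis $\alpha\ne\beta$ is immaterial for your argument since both sides vanish by antisymmetry when $\alpha=\beta$; the paper needs it only so that $dx^\alpha\wedge dx^\beta\ne 0$ and the complementary pair $(j,k)$ is well-defined.
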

\begin{proof}
Taking $\omega = \eta = \vol$ in Definition \ref{def_star}, we see that $\star \vol = g^{00} \cdots g^{33} = -1$. Then (\ref{star1}) follows immediately:
    \begin{align*}
\star (dx^\alpha \wedge \star dx^\beta) = g^{\alpha \beta} \star \vol = -g^{\alpha \beta}.
    \end{align*}

Let us turn to (\ref{star2}). Let $\alpha \ne \beta$
and choose indices $j$, $k$ and a sign $\epsilon = \pm 1$ so that
    \begin{align*}
dx^\alpha \wedge dx^\beta \wedge dx^j \wedge dx^k = \epsilon \vol.
    \end{align*}
Now $\star (dx^\alpha \wedge dx^\beta) = c dx^j \wedge dx^k$ for a sign $c=\pm 1$ that satisfies
    \begin{align*}
c \epsilon \vol = c (dx^\alpha \wedge dx^\beta )\wedge (dx^j \wedge dx^k)
= \eta \vol.
    \end{align*}
where
$\eta = \pair{dx^\alpha \wedge dx^\beta, dx^\alpha \wedge dx^\beta} = g^{\alpha \alpha} g^{\beta \beta}$.
Both sides of (\ref{star2}) vanish if $p \ne \alpha$ or $p \ne \beta$. Suppose now that $p=\alpha$, the case $p=\beta$ is analogous and we omit its proof. There holds
$\star (dx^\alpha \wedge dx^j \wedge dx^k) = c' dx^\beta$
for a sign $c'=\pm 1$ that satisfies
    \begin{align*}
c' \epsilon \vol =
c' dx^\alpha \wedge dx^\beta \wedge dx^j \wedge dx^k =
c' (dx^\alpha \wedge dx^j \wedge dx^k) \wedge dx^\beta
= \eta' \vol,
    \end{align*}
where $\eta' = g^{\alpha \alpha} g^{jj} g^{kk}$.
Solving for $c$ and $c'$ gives
    \begin{align*}
c c' = \epsilon^2 \eta \eta' = g^{\alpha \alpha} g^{\alpha \alpha} g^{\beta \beta} g^{jj} g^{kk} = - g^{\alpha \alpha}.
    \end{align*}
\end{proof}

\subsection{The adjoint \texorpdfstring{$d_A^*$}{of the covariant derivative} in coordinates}

Using the formulas (\ref{star1})--(\ref{star2})
we can easily find expressions for $d_A^* = \star d_A \star$ in the Cartesian coordinates.

\begin{lemma}
\label{lemma:aux2}
If $X= X_\alpha dx^\alpha$, then $$d_A^\ast X = - \left( \partial^\alpha X_\alpha + [A^\alpha, X_\alpha]\right).$$
If $Y= Y_{\alpha \beta} dx^\alpha \wedge dx^\beta$, then $$d_A^\ast Y =
	 \left( \partial^\alpha Y_{\beta\alpha} + [A^\alpha, Y_{\beta\alpha }]\right) dx^\beta
- \left( \partial^\alpha Y_{\alpha\beta} + [A^\alpha, Y_{\alpha \beta}]\right) dx^\beta.$$
\end{lemma}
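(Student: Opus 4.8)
The plan is to compute $d_A^* = \star d_A \star$ directly in the Cartesian coordinates, reducing everything to the two identities (\ref{star1}) and (\ref{star2}) just established. Since $\mathbb{R}^{1+3}$ has even dimension $m = 4$, the sign factor $(-1)^{m+km}$ in the general formula for the adjoint equals $1$ for every $k$, so indeed $d_A^* = \star d_A \star$ on the nose and no extra signs intervene.

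First I would treat a 1-form $X = X_\alpha\, dx^\alpha$. Applying $\star$ gives the 3-form $\star X = X_\alpha \star dx^\alpha$, and since $\star dx^\alpha$ has constant coefficients we have $d(\star dx^\alpha) = 0$, hence $d_A(\star X) = dX_\alpha \wedge \star dx^\alpha + [A, \star X]$. Expanding the graded bracket between the 1-form $A$ and the 3-form $\star X$, where $(-1)^{1 \cdot 3} = -1$, and using $dx^\beta \wedge \star dx^\alpha = -\star dx^\alpha \wedge dx^\beta$, the matrix-multiplication terms combine into $[A_\beta, X_\alpha]\, dx^\beta \wedge \star dx^\alpha$; thus $d_A(\star X) = (\partial_\beta X_\alpha + [A_\beta, X_\alpha])\, dx^\beta \wedge \star dx^\alpha$. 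Applying $\star$ once more and invoking (\ref{star1}), namely $\star(dx^\beta \wedge \star dx^\alpha) = -g^{\alpha\beta}$, yields $d_A^* X = -(\partial^\alpha X_\alpha + [A^\alpha, X_\alpha])$, as claimed.

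For a 2-form $Y = Y_{\alpha\beta}\, dx^\alpha \wedge dx^\beta$ (only the $\alpha \ne \beta$ part contributes, so one may assume $Y_{\alpha\beta}$ antisymmetric) the argument has the same shape: $\star Y = Y_{\alpha\beta} \star(dx^\alpha \wedge dx^\beta)$ is a 2-form with constant-coefficient basis forms, the graded bracket between the 1-form $A$ and the 2-form $\star Y$ now carries $(-1)^{1\cdot 2} = 1$, and using $\star(dx^\alpha \wedge dx^\beta) \wedge dx^p = dx^p \wedge \star(dx^\alpha \wedge dx^\beta)$ I get $d_A(\star Y) = (\partial_p Y_{\alpha\beta} + [A_p, Y_{\alpha\beta}])\, dx^p \wedge \star(dx^\alpha \wedge dx^\beta)$. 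Then (\ref{star2}), $\star(dx^p \wedge \star(dx^\alpha \wedge dx^\beta)) = g^{p\beta}\, dx^\alpha - g^{p\alpha}\, dx^\beta$, gives $d_A^* Y = (\partial^\beta Y_{\alpha\beta} + [A^\beta, Y_{\alpha\beta}])\, dx^\alpha - (\partial^\alpha Y_{\alpha\beta} + [A^\alpha, Y_{\alpha\beta}])\, dx^\beta$, which after relabelling the summation and free indices in the first term is exactly the stated formula.

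There is no serious obstacle here; the only thing to watch carefully is the sign bookkeeping in the graded commutator $[\cdot,\cdot]$ when $A$ is paired with a form of degree $3$ versus degree $2$, together with the elementary observation that the constant-coefficient forms $\star dx^\alpha$ and $\star(dx^\alpha \wedge dx^\beta)$ are $d$-closed, so that no further curvature-type terms are produced beyond those exhibited. Everything else is a direct substitution of (\ref{star1})--(\ref{star2}).
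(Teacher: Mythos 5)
Your proof is correct and follows essentially the same route as the paper: apply $\star$, commute $d_A$ with the constant-coefficient forms $\star dx^\alpha$ and $\star(dx^\alpha \wedge dx^\beta)$, and reduce to the Hodge identities (\ref{star1}) and (\ref{star2}). You are in fact more explicit than the paper about the graded-bracket sign bookkeeping, which the paper's proof states without comment; the extra care is harmless and the outcome is identical up to index relabelling.
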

\begin{proof}
We have
    \begin{align*}
d_A^\ast X = (\p_\alpha X_\beta + [A_\alpha, X_\beta])\star (dx^\alpha \wedge \star dx^\beta)
=  - \partial^\alpha X_\alpha - [A^\alpha, X_\alpha],
    \end{align*}
and
    \begin{align*}
d_A^\ast Y &=
(\p_p Y_{\alpha \beta} + [A_p, Y_{\alpha \beta}]) \star (dx^p \wedge \star (dx^\alpha \wedge dx^\beta) )
\\&
( \partial^\beta Y_{\alpha\beta} + [A^\beta, Y_{\alpha \beta}]) dx^\alpha
- ( \partial^\alpha Y_{\alpha\beta} + [A^\alpha, Y_{\alpha \beta}]) dx^\beta.
    \end{align*}
\end{proof}

\subsection{Proofs of (\ref{eqn : dastarliebracket})--(\ref{eqn : cubic term})}

In some of our computations we encounter
terms of the form $\star[X,\star Y]\in \Omega^{1}$ for $X\in \Omega^{1}$ and $Y\in \Omega^{2}$. The next elementary lemma computes this term explicitly.

\begin{lemma}\label{lemma:aux}
If $X=X_{\alpha}dx^{\alpha}$ and $Y=Y_{\alpha\beta}dx^{\alpha}\wedge dx^{\beta}$ then
\[\star[X,\star Y]=[X^{\alpha},Y_{\beta\alpha}]dx^{\beta}-[X^{\alpha},Y_{\alpha\beta}]dx^{\beta}.\]
\end{lemma}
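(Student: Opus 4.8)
The final statement to prove is Lemma~\ref{lemma:aux}, computing $\star[X,\star Y]$ for $X\in\Omega^1$ and $Y\in\Omega^2$ in Cartesian coordinates.

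\textbf{Approach.} The plan is a direct coordinate computation, reducing everything to the two Hodge-star identities (\ref{star1})--(\ref{star2}) already established in the previous lemma. Writing $X = X_\alpha\, dx^\alpha$ and $Y = Y_{\alpha\beta}\, dx^\alpha\wedge dx^\beta$, I will first recall that for a $1$-form $X$ and a $2$-form $\star Y$ (which is again a $2$-form on $\mathbb R^{1+3}$), the bracket $[X,\star Y]$ is, by the definition of the pairing $[\cdot,\cdot]$ with $p=1$, $q=2$, simply $X\wedge\star Y + \star Y\wedge X$; but since we are applying $\star$ again and working componentwise with the matrix structure, it is cleaner to expand $[X,\star Y]$ as $[X_p, (\star Y)_{\gamma\delta}]\, dx^p\wedge(\text{the relevant 2-form})$ and track the matrix commutators as scalars that factor out of the exterior-algebra manipulations.

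\textbf{Key steps.} First I would write $\star Y = Y_{\alpha\beta}\,\star(dx^\alpha\wedge dx^\beta)$ and hence
\[
[X,\star Y] = [X_p, Y_{\alpha\beta}]\, dx^p\wedge\star(dx^\alpha\wedge dx^\beta),
\]
where the sign bookkeeping for the $[\cdot,\cdot]$ pairing of a $1$-form with a $2$-form works out so that this is correct (the wedge of a $1$-form past a $2$-form picks up $(-1)^{1\cdot 2}=+1$, so $[X,\star Y]$ and $X\wedge\star Y + \star Y \wedge X$ agree and both reduce to the displayed expression once the matrix entries are pulled out as commutators). Applying $\star$ and invoking (\ref{star2}),
\[
\star\big(dx^p\wedge\star(dx^\alpha\wedge dx^\beta)\big) = g^{p\beta}\,dx^\alpha - g^{p\alpha}\,dx^\beta,
\]
valid when $\alpha\ne\beta$ (and the $\alpha=\beta$ terms contribute nothing since $Y_{\alpha\alpha}\,dx^\alpha\wedge dx^\alpha = 0$, so summing over all $\alpha,\beta$ is harmless). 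Therefore
\[
\star[X,\star Y] = [X_p, Y_{\alpha\beta}]\,(g^{p\beta}\,dx^\alpha - g^{p\alpha}\,dx^\beta)
= [X^\beta, Y_{\alpha\beta}]\,dx^\alpha - [X^\alpha, Y_{\alpha\beta}]\,dx^\beta,
\]
raising indices with the Minkowski metric. Relabelling $\alpha\leftrightarrow\beta$ in the first term gives $[X^\alpha, Y_{\beta\alpha}]\,dx^\beta - [X^\alpha, Y_{\alpha\beta}]\,dx^\beta$, which is exactly the claimed formula.

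\textbf{Main obstacle.} There is no serious obstacle here; the only delicate point is the sign and symmetry bookkeeping in two places: (i) confirming that the graded bracket $[X,\star Y]$ for $(p,q)=(1,2)$ really does collapse to $[X_p,(\star Y)_{\gamma\delta}]\,dx^p\wedge dx^\gamma\wedge dx^\delta$ with the expected sign once the matrix entries are extracted, and (ii) handling the antisymmetrization of the $Y_{\alpha\beta}$ indices consistently so that the final relabelling $\alpha\leftrightarrow\beta$ is legitimate. Both are routine once one fixes the convention that $Y_{\alpha\beta}$ need not be assumed antisymmetric a priori (and the formula holds in either case because only the antisymmetric part of $Y_{\alpha\beta}$ enters $Y$). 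I would present the computation in three displayed lines as above and remark that the $\alpha=\beta$ contributions drop out, which justifies applying (\ref{star2}) term by term.
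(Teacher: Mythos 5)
Your proof is correct and follows essentially the same route as the paper: expand $[X,\star Y] = [X_p, Y_{\alpha\beta}]\,dx^p\wedge\star(dx^\alpha\wedge dx^\beta)$, apply the identity (\ref{star2}), then relabel. The paper's proof is just this one-line computation.

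One small sign slip in your parenthetical aside: since $(-1)^{pq}=(-1)^{1\cdot 2}=+1$, the definition $[\omega,\eta]=\omega\wedge\eta-(-1)^{pq}\eta\wedge\omega$ gives $[X,\star Y]=X\wedge\star Y-\star Y\wedge X$, not $X\wedge\star Y+\star Y\wedge X$ as you wrote. The minus sign is in fact crucial — it is exactly what turns the matrix products into the commutator $[X_p,Y_{\alpha\beta}]$ after moving $dx^p$ past the two-form $\star(dx^\alpha\wedge dx^\beta)$; with the plus sign you would get the anticommutator $X_pY_{\alpha\beta}+Y_{\alpha\beta}X_p$. Your displayed formula $[X,\star Y]=[X_p,Y_{\alpha\beta}]\,dx^p\wedge\star(dx^\alpha\wedge dx^\beta)$ is the correct one, so the remainder of the argument (applying (\ref{star2}), noting the $\alpha=\beta$ terms vanish, and relabelling) goes through unchanged.
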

\begin{proof} We have
    \begin{align*}
\star[X,\star Y]= [X_p, Y_{\alpha \beta}] \star (dx^p \wedge \star (dx^\alpha \wedge dx^\beta) )
=
[X^\beta, Y_{\alpha \beta}] dx^\alpha
-[X^\alpha, Y_{\alpha \beta}] dx^\beta.
    \end{align*}
\end{proof}

We are now ready to prove (\ref{eqn : wstardaw})
that expands $\star [X, \star d_A Z]$ for $X, Z \in \Omega^1$ in coordinates. Using Lemma \ref{lemma:aux} with $Y_{\alpha\beta} = \p_\alpha Z_\beta + [A_\alpha, Z_\beta]$, we obtain
\begin{align*}
\star [X, \star d_A Z]
&=
-[X^\alpha, \partial_\alpha Z_\beta + [A_\alpha, Z_\beta]] dx^\beta
+[X^\alpha, \partial_\beta Z_\alpha + [A_\beta, Z_\alpha]] dx^\beta.
\end{align*}

We apply Lemma \ref{lemma:aux} with $Y_{\alpha\beta}$ replaced by $[Y_\alpha, Z_\beta]$,
to establish (\ref{eqn : cubic term}), giving $\star [X, \star [Y, Z]]$ for $X, Y, Z \in \Omega^1$ in coordinates as follows,
\begin{align*}
\star [X, \star [Y, Z]]
&=- [X^\alpha,   [Y_\alpha, Z_\beta]] dx^\beta + [X^\alpha,   [Y_\beta, Z_\alpha]] dx^\beta.
\end{align*}

Proof of (\ref{eqn : dastarliebracket}), giving analogous expansion of $d_A^\ast [X, Z]$ for $X,Z \in \Omega^1$, is more involved. Let us consider first the terms in the $\beta$th component of
    \begin{align}\label{dastar_step1}
d_A^\ast [X, Z] + [X, d_A^* Z]-[d_A^* X, Z]
    \end{align}
that contain derivatives. Using Lemma \ref{lemma:aux2} these read
    \begin{align*}
\p^\alpha [X_\beta, Z_\alpha] - \p^\alpha [X_\alpha, Z_\beta]
-([X_\beta, \p^\alpha Z_\alpha]-[\p^\alpha X_\alpha, Z_\beta])
= [\p^\alpha X_\beta, Z_\alpha] - [X_\alpha, \p^\alpha Z_\beta].
    \end{align*}
Similarly, the terms in the $\beta$th component of (\ref{dastar_step1}) that do not contain derivatives are
    \begin{align*}
&[A^\alpha, [X_\beta, Z_\alpha]] - [A^\alpha, [X_\alpha, Z_\beta]] - ([X_\beta, [A^\alpha, Z_\alpha]] - [[A^\alpha, X_\alpha], Z_\beta])
\\&\qquad=
- [Z_\alpha, [A^\alpha, X_\beta]] + [X_\alpha, [Z_\beta, A^\alpha]].
    \end{align*}
We used here the Jacobi identity. Hence we obtain (\ref{eqn : dastarliebracket}), that is,
    \begin{align*}
d_A^\ast [X, Z] &= [d_A^* X, Z]-[X, d_A^* Z]
\\&\qquad+
\left([\p^\alpha X_\beta + [A^\alpha, X_\beta], Z_\alpha] - [X_\alpha, \p^\alpha Z_\beta + [A^\alpha, Z_\beta]]\right) dx^\beta.
    \end{align*}

\subsection{Yang--Mills equations in coordinates}
For the convenience of the reader we prove the following well-known lemma.

\begin{lemma}\label{lem_YM_coord}
If $A = A_\alpha dx^\alpha$ then the components of $d^*_A F_A$ are given by
    \begin{align*}
\partial^\alpha \partial_\beta A_\alpha
-\partial^\alpha \partial_\alpha A_\beta
- [\partial^\alpha  A_\alpha, A_\beta]
- 2 [A^\alpha, \partial_\alpha A_\beta]
+ [A^\alpha, \partial_\beta A_\alpha]
- [A^\alpha, [A_\alpha, A_\beta]].
    \end{align*}
\end{lemma}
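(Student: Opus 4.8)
The plan is to reduce everything to the coordinate formula for $d_A^\ast$ on $2$-forms from Lemma~\ref{lemma:aux2}. First I would write out the curvature in Cartesian coordinates: since $A = A_\alpha\,dx^\alpha$ we have $dA = (\partial_\mu A_\nu)\,dx^\mu\wedge dx^\nu$ and $A\wedge A = A_\mu A_\nu\,dx^\mu\wedge dx^\nu$ (matrix product), both sums running over all index pairs, so $F_A = dA + A\wedge A = Y_{\mu\nu}\,dx^\mu\wedge dx^\nu$ with $Y_{\mu\nu} = \partial_\mu A_\nu + A_\mu A_\nu$. The coefficient $A_\mu A_\nu$ is not in $\g$, but this is harmless: the formula in Lemma~\ref{lemma:aux2} depends only on the antisymmetric part of the coefficient matrix, because the symmetric part $S_{\mu\nu}$ contributes $\partial^\alpha S_{\beta\alpha}-\partial^\alpha S_{\alpha\beta} + [A^\alpha,S_{\beta\alpha}]-[A^\alpha,S_{\alpha\beta}] = 0$, and the antisymmetric part of $A_\mu A_\nu$ is $\tfrac12[A_\mu,A_\nu]\in\g$. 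Hence I may feed $Y_{\mu\nu}$ directly into Lemma~\ref{lemma:aux2}.

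Next I would substitute, obtaining
\[
d_A^\ast F_A = \bigl(\partial^\alpha Y_{\beta\alpha} + [A^\alpha, Y_{\beta\alpha}] - \partial^\alpha Y_{\alpha\beta} - [A^\alpha, Y_{\alpha\beta}]\bigr)\,dx^\beta,
\]
with $Y_{\beta\alpha} = \partial_\beta A_\alpha + A_\beta A_\alpha$ and $Y_{\alpha\beta} = \partial_\alpha A_\beta + A_\alpha A_\beta$, and then expand by the Leibniz rule and sort the result into three groups. The purely second-order terms are $\partial^\alpha\partial_\beta A_\alpha - \partial^\alpha\partial_\alpha A_\beta$, which are the first two terms of the claim. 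The first-order terms are $(\partial^\alpha A_\beta)A_\alpha + A_\beta\,\partial^\alpha A_\alpha - (\partial^\alpha A_\alpha)A_\beta - A_\alpha\,\partial^\alpha A_\beta + [A^\alpha,\partial_\beta A_\alpha] - [A^\alpha,\partial_\alpha A_\beta]$; using $\sum_\alpha X_\alpha Z^\alpha = \sum_\alpha X^\alpha Z_\alpha$ for contracted indices, the pair $(\partial^\alpha A_\beta)A_\alpha - A_\alpha\,\partial^\alpha A_\beta$ collapses to $-[A^\alpha,\partial_\alpha A_\beta]$ and the pair $A_\beta\,\partial^\alpha A_\alpha - (\partial^\alpha A_\alpha)A_\beta$ collapses to $[A_\beta,\partial^\alpha A_\alpha] = -[\partial^\alpha A_\alpha, A_\beta]$, leaving $-[\partial^\alpha A_\alpha, A_\beta] - 2[A^\alpha,\partial_\alpha A_\beta] + [A^\alpha,\partial_\beta A_\alpha]$. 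Finally the cubic terms are $[A^\alpha, A_\beta A_\alpha] - [A^\alpha, A_\alpha A_\beta] = [A^\alpha, A_\beta A_\alpha - A_\alpha A_\beta] = [A^\alpha, [A_\beta, A_\alpha]] = -[A^\alpha, [A_\alpha, A_\beta]]$. Adding the three groups reproduces exactly the stated expression for the components of $d_A^\ast F_A$.

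The proof is entirely algebraic — no estimates or analytic input are needed — so the only point demanding care, and hence the main obstacle, is the index and sign bookkeeping: keeping track that the terms $A_\mu A_\nu$ entering $F_A$ are matrix products rather than brackets, invoking the cancellation of the symmetric part to justify applying Lemma~\ref{lemma:aux2} to them, and raising/lowering contracted indices correctly when combining the first-order terms into commutators. Everything else is the definition $[X,Y] = XY - YX$ and the Leibniz rule.
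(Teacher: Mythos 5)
Your proof is correct and follows essentially the same route as the paper: feed the curvature coefficients into Lemma~\ref{lemma:aux2} and expand. The only cosmetic difference is that you write $F_A = dA + A\wedge A$ and take $Y_{\mu\nu} = \partial_\mu A_\nu + A_\mu A_\nu$ (a non-$\g$-valued, non-antisymmetric coefficient, which you correctly justify is harmless because the formula in Lemma~\ref{lemma:aux2} kills the symmetric part), whereas the paper writes $F_A = dA + \tfrac12[A,A]$ and uses $Y_{\alpha\beta} = \partial_\alpha A_\beta + \tfrac12[A_\alpha,A_\beta]$, trading your antisymmetrization remark for bookkeeping of the $\tfrac12$ factors; both expansions combine into the stated expression.
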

\begin{proof}
We apply Lemma \ref{lemma:aux2} with $Y_{\alpha \beta} = \p_\alpha A_\beta + \frac 1 2 [A_\alpha, A_\beta]$, to see that the components of $d^*_A F_A$ are
    \begin{align*}
&\p^\alpha \p_\beta A_\alpha
+ \frac 1 2 \p^\alpha [A_\beta, A_\alpha]
+ [A^\alpha, \p_\beta A_\alpha]
+ \frac 1 2 [A^\alpha, [A_\beta, A_\alpha]]
\\
-&\p^\alpha \p_\alpha A_\beta
- \frac 1 2 \p^\alpha [A_\alpha, A_\beta]
- [A^\alpha, \p_\alpha A_\beta]
- \frac 1 2 [A^\alpha, [A_\alpha, A_\beta]],
    \end{align*}
and the claim follows after combining the terms with factors $1/2$, and using
    \begin{align*}
\p^\alpha [A_\alpha, A_\beta]
+[A^\alpha, \p_\alpha A_\beta]
=
[\p^\alpha A_\alpha, A_\beta]
+ 2[A^\alpha, \p_\alpha A_\beta].
    \end{align*}
\end{proof}

\section{Direct problem}
\label{appendix_energy}

\subsection{An energy estimate}

\def\mtext{}

We write again $(x^0, x^1, x^2, x^3) = (t,x) \in \R^{1+3}$ for the Cartesian coordinates, and recall the sign convention (\ref{def_Minkowski_Box}) for the wave operator $\Box$. We write also $\nabla u = (\p_{x^1} u, \p_{x^2} u, \p_{x^3} u)$ and denote by $\cdot$ the Euclidean inner product on $\R^3$.

Let $X_j$, $j=1,2$, be first order
and $Y_j$, $j=1,2$, zeroth order differential operators on $\R^{1+3}$.
Suppose, furthermore, that $X_2$ is of zeroth order with respect to $t$ variable. We will consider the system
    \begin{align}\label{model}
\Box v + X_1 v + X_2 u &= f_1,
\\\notag
\p_t u + Y_1 v + Y_2 u &=  f_2.
    \end{align}
Here $v$ and $u$ are allowed to take values on a Hermitian vector bundle, but we do not emphasize this in the notation.

We prove an energy estimate for (\ref{model}).
Write $B(r) = \{ x \in \R^3 : |x| < r\}$.
Let $R > 0$ and define $r(t) = R - t$.
Consider the following local energy
    \begin{align*}
E(t) = \frac 1 2 \int_{B(r(t))}
\mathcal E(t,x)\, dx,
\quad \mathcal E = |\p_t v|^2 + |\nabla v|^2 + |v|^2 + |\nabla u|^2 + |u|^2,
    \end{align*}
and the norm of the source
$$
F(t) = \int_{B(r(t))}
\mathcal F(t,x)\, dx,
\quad \mathcal F = |f_1|^2 + |f_2|^2 + |\nabla f_2|^2.
$$

\begin{lemma}
Let $T > 0$ and define the cut cone
    \begin{align*}
\mathcal C = \{(t,x) \in \mathbb R^{1+3} : |x|< R-t,\ 0 < t < T \}.
    \end{align*}
Suppose that $v,u \in C^2(\mathcal C)$ satisfy (\ref{model}) in $\mathcal C$.
Then for a constant $C > 0$ that depends only on the $L^\infty(\mathcal C)$-norm of
the coefficients of $X_j$ and $W^{1,\infty}(\mathcal C)$-norm of
the coefficients of $Y_j$, $j=1,2$,
    \begin{align}\label{energy_B}
E(t)
&\le
e^{C t} E(0) + C \int_0^t e^{C(t-s)} F(s) ds, \quad 0 < t < T.
    \end{align}
\end{lemma}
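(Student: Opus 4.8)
The plan is to establish (\ref{energy_B}) by the classical energy (multiplier) method for the wave operator $\Box$, exploiting the fact that the lateral boundary $|x|=R-t$ of the cut cone $\mathcal C$ is characteristic for $\Box$, so that the surface terms produced by integration by parts will carry a favourable sign. First I would differentiate $E$ using the transport theorem for the shrinking balls $B(r(t))$, $r(t)=R-t$, $\dot r=-1$:
\[
\frac{d}{dt}E(t) = \frac12\int_{B(r(t))}\p_t\mathcal E\,dx \;-\; \frac12\int_{\p B(r(t))}\mathcal E\,dS .
\]
To treat $\p_t\mathcal E$ I would multiply the first equation of (\ref{model}) by $2\operatorname{Re}\p_t\bar v$ and use the pointwise identity (a consequence of $\Box=\p_t^2-\Delta$ and the product rule)
\[
\tfrac12\,\p_t\!\left(|\p_t v|^2+|\nabla v|^2\right) = \operatorname{Re}\!\left(\p_t\bar v\,\Box v\right) + \div\!\left(\operatorname{Re}(\p_t\bar v\,\nabla v)\right),
\]
so that, upon integrating over $B(r(t))$, the divergence contributes the surface term $\int_{\p B(r(t))}\operatorname{Re}(\p_t\bar v\,\p_\nu v)\,dS$.

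For the remaining pieces of $\mathcal E$ I would write $\p_t|v|^2=2\operatorname{Re}(\bar v\,\p_t v)$, $\p_t|u|^2=2\operatorname{Re}(\bar u\,\p_t u)$ and $\p_t|\nabla u|^2=2\operatorname{Re}(\nabla\bar u\cdot\nabla\p_t u)$, and then eliminate $\p_t u$ and $\nabla\p_t u$ using the second equation of (\ref{model}): $\p_t u=f_2-Y_1 v-Y_2 u$ and, differentiating it in space, $\nabla\p_t u=\nabla f_2-(\nabla Y_1)v-Y_1\nabla v-(\nabla Y_2)u-Y_2\nabla u$; this last step is the only place where the $W^{1,\infty}$-bound on the coefficients of $Y_j$ enters. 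Likewise $\Box v=f_1-X_1 v-X_2 u$, and here it matters that $X_2$ is of zeroth order in $t$, so that $X_2 u$ is a pointwise combination of $u$ and $\nabla u$ only, hence controlled by $C\,\mathcal E^{1/2}$ with $C$ depending on $\|X_2\|_{L^\infty}$; the same holds for $X_1 v$ in terms of $\p_t v$, $\nabla v$, $v$.

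Collecting terms, the interior integrand equals $\operatorname{Re}(\p_t\bar v\,\Box v)+\tfrac12\p_t(|v|^2+|\nabla u|^2+|u|^2)$ and is bounded pointwise by $C(\mathcal E+\mathcal F)$ via $2ab\le a^2+b^2$, with $C$ depending only on the stated norms of the coefficients, whereas the boundary integrand satisfies
\[
\operatorname{Re}(\p_t\bar v\,\p_\nu v)-\tfrac12\mathcal E \;\le\; |\p_t v|\,|\nabla v|-\tfrac12\!\left(|\p_t v|^2+|\nabla v|^2\right) \;\le\; 0,
\]
since the remaining terms of $\mathcal E$ are non-negative — this is the finite speed of propagation, and it lets us discard the surface integral. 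Hence $\frac{d}{dt}E(t)\le C\int_{B(r(t))}(\mathcal E+\mathcal F)\,dx = 2C\,E(t)+C\,F(t)$, and Grönwall's inequality yields (\ref{energy_B}) after adjusting the constant. The computations use only $v,u\in C^2(\mathcal C)$, which is the hypothesis (for rougher solutions one would mollify). I do not anticipate a genuine obstacle: the only points needing care are keeping the orientation of the boundary term correct so its sign comes out right, and checking that neither differentiating the transport equation in space nor the coupling term $X_2 u$ costs more than one derivative of $u$ or of a coefficient — both guaranteed by the structural assumptions that $X_2$ has no $\p_t$ and that $Y_1,Y_2$ are $W^{1,\infty}$.
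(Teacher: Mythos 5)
Your proposal is correct and follows essentially the same route as the paper's own proof: differentiate the localized energy, integrate by parts (your ``pointwise identity'' is exactly that step), substitute the two equations of (\ref{model}) and their spatial derivatives, observe that the boundary term has a favourable sign because the lateral surface is characteristic, and close with Gr\"onwall. The bookkeeping you note — that differentiating the transport equation in $x$ is what brings in the $W^{1,\infty}$-norm of the $Y_j$ coefficients, and that $X_2$ having no $\p_t$ keeps $X_2 u$ controlled by $\mathcal E^{1/2}$ — matches the paper's estimates $|z_j|^2, |\nabla z_2|^2 \le C(\mathcal E + \mathcal F)$.
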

\begin{proof}
We differentiate the local energy
    \begin{align*}
\p_t E
&=
\int_{B(r(t))} \p_t^2 v \p_t v  + \nabla v \cdot \nabla \p_t v + v \p_t v + \nabla u \cdot \nabla \p_t u + u \p_t u\, dx
-\frac 1 2 \int_{\p B(r(t))} \mathcal E dx.
    \end{align*}
We write $z_1 = - X_1 v - X_2 u + v+ f_1$ and $z_2 = -Y_1 v - Y_2 u+f_2$,
apply integration by parts to the second term in the first integral, and use (\ref{model}) to obtain
    \begin{align*}
\p_t E
&=
\int_{B(r(t))} z_1 \p_t v + \nabla u \cdot \nabla z_2 + u z_2 \,dx
+ \int_{\p B(r(t))}  \p_\nu v \p_t v - \frac 1 2 \mathcal E\,dx.
    \end{align*}
We have $|z_j|^2 \le C(\mathcal E+  \mathcal F)$, $j=1,2$, and $|\nabla z_2|^2 \le C(\mathcal E+  \mathcal F)$,
where the constant $C > 0$ depends only on the $L^\infty(\mathcal C)$-norm of
the coefficients of $X_j$ and $W^{1,\infty}(\mathcal C)$-norm of
the coefficients of $Y_j$, $j=1,2$.
Moreover,
    \begin{align*}
2 |\p_\nu v \p_t v| \le |\nabla v|^2 + |\p_t v|^2 \le \mathcal E,
    \end{align*}
and we obtain
    \begin{align*}
\p_t E
&\le C (E+{\mtext F}).
    \end{align*}
Now we can use Gr\"onwall's inequality, or simply notice that
$$
e^{C t} \p_t (e^{-C t} E) \le CF,
$$
leading to the energy estimate (\ref{energy_B}).
\end{proof}

The energy estimate (\ref{energy_B}) implies the following two uniqueness results.

\begin{lemma}\label{lem_fsop}
Suppose that $v,u \in C^2(\mathbb D)$, that the coefficients
of $X_j$ are in $L^\infty(\mathbb D)$
and that the coefficients
of $Y_j$ are in $W^{1,\infty}(\mathbb D)$ for $j=1,2$.
If $(v,u)$ is a solution to (\ref{model}) with $f_1=0$ and $f_2=0$ and if $(v,u)$ vanishes near $\p^- \mathbb D$, then $(v,u)$ vanishes in $\mathbb D$.
\end{lemma}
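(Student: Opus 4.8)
The plan is to propagate the vanishing of $(v,u)$ upward through $\mathbb D$ by repeatedly applying the energy estimate (\ref{energy_B}) on short backward light cones; the hypothesis $f_1=f_2=0$ makes the source term $F$ in (\ref{energy_B}) vanish identically, so on each such cone the estimate reads $E(t)\le e^{Ct}E(0)$, and it suffices to arrange that $E(0)=0$.

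Concretely, fix an open neighbourhood $\mathcal U\subset\mathbb D$ of $\p^-\mathbb D$ on which $(v,u)\equiv 0$; since $\p^-\mathbb D$ is compact there is $\epsilon>0$ with $\{(t,x)\in\mathbb D:\operatorname{dist}((t,x),\p^-\mathbb D)<2\epsilon\}\subset\mathcal U$. Put $s_*=\sup\{s\in[-1,1]:(v,u)\equiv 0\ \text{on}\ \mathbb D\cap\{t\le s\}\}$. The slice $\mathbb D\cap\{t=s\}$ shrinks to the point $(-1,0)\in\p^-\mathbb D$ as $s\downarrow-1$, so it lies in $\mathcal U$ for $s$ close to $-1$; hence $s_*>-1$, and by continuity $(v,u)\equiv 0$ on $\mathbb D\cap\{t\le s_*\}$. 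It is enough to exclude $s_*<1$: then $(v,u)\equiv 0$ on $\mathbb D\cap\{t<1\}$ and, by continuity, on all of $\mathbb D$.

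So suppose $s_*<1$ and choose $\delta\in(0,\epsilon/(3\sqrt2))$ with $s_*+\delta\le 1$; I claim $(v,u)$ vanishes on $\mathbb D\cap\{t\le s_*+\delta/2\}$, contradicting the definition of $s_*$. Let $q=(t_0,x_0)\in\operatorname{int}(\mathbb D)$ with $s_*<t_0<s_*+\delta$. If $\operatorname{dist}(q,\p^-\mathbb D)<\epsilon$ then $q\in\mathcal U$ and $(v,u)(q)=0$. Otherwise consider the truncated backward cone $\mathcal C_q=\{(t,x):|x-x_0|<t_0-t,\ s_*<t<t_0\}$. Every point of $\overline{\mathcal C_q}$ lies within distance $\sqrt2(t_0-s_*)<\epsilon/3$ of $q$, and, using $|x|\le|x_0|+|x-x_0|\le 1-t$ on any backward cone issued from a point of $\mathbb D$, one checks $\overline{\mathcal C_q}\subset\operatorname{int}(\mathbb D)$ with a uniform margin from $\p^\pm\mathbb D$. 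On the bottom disc $\{t=s_*\}\cap\overline{\mathcal C_q}\subset\mathbb D\cap\{t\le s_*\}$ the pair $(v,u)$ and its first derivatives vanish (the derivatives by continuity from $\{t<s_*\}$), so $E(0)=0$. Applying (\ref{energy_B}) to $(v,u)$ on $\mathcal C_q$ — after the time translation sending $s_*\mapsto 0$ and space translation sending $x_0\mapsto 0$, with $R=T=t_0-s_*$, under which the system (\ref{model}) retains its form — and using $F\equiv 0$ gives $E\equiv 0$, i.e. $(v,u)\equiv 0$ on $\mathcal C_q$; by continuity $(v,u)(q)=0$. Since interior points are dense in $\mathbb D$, this yields $(v,u)\equiv 0$ on $\mathbb D\cap\{t\le s_*+\delta/2\}$, the contradiction. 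Hence $s_*=1$ and $(v,u)\equiv 0$ in $\mathbb D$.

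The main obstacle is purely geometric bookkeeping: one must verify that backward light cones issued from points of $\mathbb D$ never cross the top boundary $\p^+\mathbb D=\{|x|=1-t\}$ — so information can only enter through $\p^-\mathbb D$, where $(v,u)$ already vanishes — and that the short cones $\mathcal C_q$ remain inside $\mathbb D$ with a uniform margin (long backward cones from deep points would exit through $\p^-\mathbb D$, which is why the propagation must be carried out incrementally in $t$). Granting this, the rest is a direct application of the energy estimate (\ref{energy_B}).
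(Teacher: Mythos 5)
Your proof is correct, but it takes a genuinely different route from the paper's. The paper extends $(v,u)$ by zero from $\mathbb D$ to the entire backward cone $\{(t,x): |x|<1-t,\ t>-1\}$; since $(v,u)$ vanishes near $\p^-\mathbb D$, the extension is $C^2$ and still solves the homogeneous system \eqref{model}, and the cone's bottom slice $t=-1$ carries no data (the diamond there degenerates to a point), so a single application of the energy estimate \eqref{energy_B} with $R=T=2$ kills $(v,u)$ on all of $\mathbb D$ at once. You instead run a continuity/bootstrap argument entirely inside $\mathbb D$: you define $s_*$ as the supremum of times up to which $(v,u)$ vanishes, and on small backward cones $\mathcal C_q$ from points $q$ just above $\{t=s_*\}$ you apply \eqref{energy_B} with zero initial data to push the vanishing upward, deriving a contradiction if $s_*<1$. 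Your geometric bookkeeping is sound: the distance-from-$\p^-\mathbb D$ bound plus convexity of $\mathcal C_q$ ensure $\overline{\mathcal C_q}\subset\mathbb D$, the bound $|x|\le|x_0|+|x-x_0|<1-t$ keeps $\mathcal C_q$ below $\p^+\mathbb D$, and continuity of the $C^2$ solution supplies vanishing initial data on $\{t=s_*\}$. The trade-off: the paper's extension-by-zero argument is a one-step proof, whereas your incremental propagation in $t$ is longer but is forced by your decision to stay inside $\mathbb D$ — you correctly identified that a long backward cone issued from deep inside $\mathbb D$ would exit through $\p^-\mathbb D$, and the short cones are your work-around for not extending the solution past that boundary.
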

\begin{proof}
As $(v,u)$ vanishes near $\p^- \mathbb D$, also the extension of $(v,u)$ by zero to the cone
    \begin{align*}
\{(t,x) \in \mathbb R^{1+3} : |x|< 1-t,\ t > -1 \},
    \end{align*}
solves (\ref{model}) with $f_1=0$ and $f_2=0$.
Therefore the energy estimate (\ref{energy_B}) implies that $(v,u)$ vanishes.
\end{proof}

\begin{proposition}\label{prop_abstract_uniq_pointed}
Let $A, B \in \Omega^{1}(\mathbb D;\mathfrak{g})$ solve (\ref{eq:1}) in $\mathbb D$.
Suppose that $A \sim B$ near $\p^- \mathbb D$.
Then $A \sim B$ in $\mathbb D$.
\end{proposition}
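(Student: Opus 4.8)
The plan is to perform two successive gauge changes so that Proposition \ref{prop_tempg_uniq} can be applied with the trivial gauge; all of the analytic content then comes from that proposition. First I would use the gauge supplied by the hypothesis: by the definition of $A \sim B$ near $\p^-\mathbb D$ there are $\mathbf g \in G^0(\mathbb D,p)$ and a neighbourhood $\mathcal U \subset \mathbb D$ of $\p^-\mathbb D$ with $B = \mathbf g \cdot A$ in $\mathcal U$. Set $B' = \mathbf g^{-1}\cdot B$. Then $B'$ solves (\ref{eq:1}) in $\mathbb D$ by gauge invariance, $B' \sim B$ in $\mathbb D$ since $\mathbf g^{-1}\in G^0(\mathbb D,p)$, and a direct computation gives $B' = A$ in $\mathcal U$. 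Since $\sim$ in $\mathbb D$ is transitive, it is enough to prove $A \sim B'$ in $\mathbb D$; renaming $B'$ as $B$, I may assume from now on that $A = B$ on a neighbourhood $\mathcal U$ of $\p^-\mathbb D$.

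Next I would pass to the temporal gauge. Put $\tilde A = \mathscr T(A)$ and $\tilde B = \mathscr T(B)$, with associated gauges $\U_A, \U_B \in G^0(\mathbb D,p)$ as in (\ref{temporal_U}); then $\tilde A\sim A$ and $\tilde B\sim B$ in $\mathbb D$, both $\tilde A$ and $\tilde B$ lie in the temporal gauge, and by gauge invariance both solve (\ref{eq:1}) in $\mathbb D$. The point is that $\tilde A = \tilde B$ near $\p^-\mathbb D$: since $A_0 = B_0$ on $\mathcal U$, the linear ordinary differential equations in (\ref{temporal_U}) that define $\U_A$ and $\U_B$ have the same coefficient and the same initial value $\id$ on $\p^-\mathbb D$, so uniqueness for linear ODEs forces $\U_A = \U_B$, and hence $\tilde A = \U_A\cdot A = \U_B\cdot B = \tilde B$, on the union of the time segments in $\mathcal U$ issuing from $\p^-\mathbb D$, which is a neighbourhood of $\p^-\mathbb D$.

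It then remains to invoke Proposition \ref{prop_tempg_uniq} with the connections $\tilde A$, $\tilde B$, the gauge $\U = \id$, and any $\mho$ as in (\ref{def_mho}). Its hypotheses are met: $\tilde A$ and $\tilde B$ solve (\ref{eq:1}) in $\mathbb D \supset \mathbb D\setminus\mho$, so in particular $d^\ast_{\tilde A}F_{\tilde A} = 0 = d^\ast_{\tilde B}F_{\tilde B}$ in $\mho$; both are in the temporal gauge; $\tilde A = \id\cdot\tilde B$ near $\p^-\mathbb D$ by the previous paragraph; and $\id = \id$ in $\mho$ near $\p^-\mathbb D$ trivially. The conclusion of Proposition \ref{prop_tempg_uniq} is $\tilde A = \tilde B$ in $\mathbb D$, whence $A \sim \tilde A = \tilde B \sim B$ in $\mathbb D$. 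Since compositions and inverses of elements of $G^0(\mathbb D,p)$ stay in $G^0(\mathbb D,p)$, this gives $A\sim B$ in $\mathbb D$, and combined with the first reduction this proves Proposition \ref{prop_abstract_uniq_pointed}.

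I do not expect a genuine obstacle here, since the hard analytic input --- pseudolinearization of the reduced Yang--Mills equations together with the finite-speed-of-propagation estimate of Lemma \ref{lem_fsop} --- is already contained in Proposition \ref{prop_tempg_uniq}. The only step that needs a little care is the claim $\tilde A = \tilde B$ near $\p^-\mathbb D$ in the temporal-gauge step, and in particular the smoothness and well-definedness of $\mathscr T(A)$ and $\mathscr T(B)$ near the vertex $p$, where $\psi$ fails to be smooth; but this is exactly the point that is already settled in the discussion following (\ref{temporal_U}).
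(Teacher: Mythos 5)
Your argument is correct and follows essentially the same route as the paper: pass to temporal gauge and invoke the pseudolinearization/finite-speed-of-propagation machinery (ultimately Lemma~\ref{lem_fsop}). The only organizational difference is that you first absorb the near-$\p^-\mathbb D$ gauge into $B$ so that Proposition~\ref{prop_tempg_uniq} can be quoted verbatim with $\U = \id$, whereas the paper keeps the gauge $\U$ linking $\tilde A$ and $\tilde B$, observes it is time-independent and hence extends over all of $\mathbb D$, and re-runs the pseudolinearization step inline instead of citing Proposition~\ref{prop_tempg_uniq}.
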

\begin{proof}
We write
$\tilde A = \mathscr T(A)$ and $\tilde B = \mathscr T(B)$, see (\ref{temporal_U}).
As $A \sim B$ near $\p^- \mathbb D$ also
$\tilde A \sim \tilde B$ there. That is, there is $\U \in G^0(\mathbb D,p)$ such that
    \begin{align*}
\tilde A = \U^{-1} d \U + \U^{-1} \tilde B \U, \quad \text{near $\p^- \mathbb D$}.
    \end{align*}
As both $\tilde A$ and $\tilde B$ are in the temporal gauge, $\U$ does not depend on time and we may define $V = \U^{-1} d \U + \U^{-1} \tilde B \U$ in the whole $\mathbb D$. Now both $\tilde A$ and $V$ satisfy the Yang--Mills equations in $\mathbb D$. They are also both in the temporal gauge and coincide near $\p^- \mathbb D$. Pseudolinearization in Section \ref{sec_pseudolin}, together with Lemma \ref{lem_fsop}, implies that $\tilde A = V$ in $\mathbb D$.
Therefore $\tilde A \sim \tilde B$ in $\mathbb D$ and hence also $A \sim B$ there.
\end{proof}

\subsection{Linearized Yang--Mills equations in relative Lorenz gauge}

A linearization of (\ref{YM_Lorenz_with_cc}) can be solved using the following lemma.
For notational convenience we translate the origin in time so that the initial conditions are posed on $t=0$.

\begin{lemma}
Let $T > 0$ and write $M = (0,T) \times \R^3$.
Let $A \in \Omega^1(M, \g)$ be as in Proposition \ref{prop_direct}.
Let $f_1 \in H^k(M; T^* M \otimes \mathfrak g)$
and $f_2 \in H^{k+1}(M; \mathfrak g)$. Then
    \begin{align}\label{lin_WJ0}
\begin{cases}
\Box_A \dot W + \star[\dot W, \star F_A] - \dot J_0dt = f_1, & t \geq 0,
\\
\p_t \dot J_0 + [A_0, \dot J_0] = f_2, & t \ge 0,
\\
\dot W = 0,\ \dot J_0 = 0, & t \le 0,
\end{cases}
    \end{align}
has a unique solution $(\dot W,\dot J_0)$ and the map $\mathcal S(f_1,f_2) = (\dot W,\dot J_0)$
is continuous
    \begin{align}\label{S_cont}
\mathcal S :
H^k(M; T^* M \otimes \mathfrak g) \times H^{k+1}(M; \mathfrak g)
\to H^{k+1}(M; M \otimes \mathfrak g \oplus \mathfrak g).
    \end{align}
\end{lemma}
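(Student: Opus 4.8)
The claim is that the linearized system \eqref{lin_WJ0} is uniquely solvable and that the solution map $\mathcal{S}$ is continuous into the asserted space. The key structural observation is that \eqref{lin_WJ0} is \emph{triangular}: the second equation is a linear ODE in $t$ for $\dot J_0$ alone (with coefficient $[A_0,\cdot]$ bounded, together with all derivatives, since $A$ is as in Proposition \ref{prop_direct}), and only once $\dot J_0$ is known does it feed into the first equation as a source. So the plan is to solve in two stages.

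\emph{Stage 1: the ODE for $\dot J_0$.} For each fixed $x \in \R^3$ the second equation with the vanishing initial condition at $t=0$ is solved by Duhamel, $\dot J_0(t,x) = \int_0^t \Theta(t,s,x)\, f_2(s,x)\, ds$, where $\Theta$ is the propagator of the linear ODE $\p_t + [A_0,\cdot]$; this is smooth and bounded with all derivatives because $A$ is. One then checks that $f_2 \mapsto \dot J_0$ is continuous $H^{k+1}(M;\mathfrak g) \to H^{k+1}(M;\mathfrak g)$: differentiating the Duhamel formula in $t$ trades a derivative for a factor $[A_0,\cdot]$ plus $f_2$ itself, and differentiating in $x$ differentiates $\Theta$ and $f_2$; integrating over the bounded time interval $(0,T)$ and using Minkowski's integral inequality gives the $H^{k+1}$ bound. (Since $\dot J_0$ vanishes for $t\le 0$ this extends to all of $M$.)

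\emph{Stage 2: the wave equation for $\dot W$.} With $\dot J_0 \in H^{k+1}(M;\mathfrak g)$ in hand, the first equation becomes
$$
\Box_A \dot W + \star[\dot W, \star F_A] = f_1 + \dot J_0\, dt, \qquad \dot W|_{t\le 0}=0,
$$
a linear wave equation with the connection wave operator $\Box_A$ (principal part $\Box$, lower-order terms depending smoothly and boundedly on $A$) and right-hand side in $H^k(M; T^*M\otimes\mathfrak g)$. Standard energy estimates for wave equations with bounded coefficients — essentially an $H^k$ version of the energy estimate proved in the lemma preceding Lemma \ref{lem_fsop}, applied to $\dot W$ and its spatial derivatives up to order $k$ — give existence of a unique solution $\dot W \in H^{k+1}(M; T^*M\otimes\mathfrak g)$ together with the bound $\|\dot W\|_{H^{k+1}} \le C(\|f_1\|_{H^k} + \|\dot J_0\|_{H^{k+1}})$; here the gain of one derivative over the source is the usual one-derivative smoothing of the wave propagator on a finite time interval. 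Composing with the Stage 1 bound yields continuity of $\mathcal S$ as in \eqref{S_cont}. Uniqueness of the pair $(\dot W,\dot J_0)$ follows from uniqueness at each stage (or directly from the energy estimate, as in Lemma \ref{lem_fsop}).

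\emph{Main obstacle.} Nothing here is deep; the only point requiring care is the bookkeeping of regularity in Stage 2 — verifying that commuting spatial derivatives through $\Box_A + \star[\cdot,\star F_A]$ produces only lower-order error terms controlled by $\|A\|_{C^{k+1}}$ and $\|F_A\|_{C^k}$, so that the higher-order energy estimates close and deliver the full $H^{k+1}$ regularity of $\dot W$ from an $H^k$ source. One should also note that the domain is the slab $M=(0,T)\times\R^3$ rather than a compact set, so the energy estimates must be localized to light cones (exactly as in the proof of the energy lemma above) and the vanishing of the data for $t\le 0$ invoked to propagate support forward; this is routine finite-speed-of-propagation reasoning.
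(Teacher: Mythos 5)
Your proposal is correct and follows essentially the same route as the paper: both exploit the triangular structure of \eqref{lin_WJ0}, solving the ODE for $\dot J_0$ first to obtain $\dot J_0 \in H^{k+1}(M;\mathfrak g)$, then treating the first equation as a linear wave equation for $\dot W$ with source $f_1 + \dot J_0\,dt \in H^k(M;T^*M\otimes\mathfrak g)$. The paper states this quite tersely (and notes in passing that one could alternatively apply the energy estimate \eqref{energy_B} directly to the coupled system, which it is in the form of), whereas you flesh out the Duhamel representation and the higher-order energy bookkeeping, but the argument is the same.
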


The system (\ref{lin_WJ0}) is of the form (\ref{model})
with
$v = \dot W$ and $u= \dot J_0$, and the coefficients of $X_j$ and $Y_j$, $j=1,2$, depend only on the background connection $A$ and are smooth.
Using the energy estimate (\ref{energy_B}), it is straightforward to show that (\ref{lin_WJ0}) has a unique solution.
However, we give a short proof based on the fact that the second equation in (\ref{lin_WJ0}) is independent from $\dot W$.

\begin{proof}
Solving the second equation gives $\dot J_0 \in H^{k+1}(M; \mathfrak g)$. Then $\dot W$ can be solved from the linear wave equation
    \begin{align*}
\Box_A \dot W + \star[\dot W, \star F_A] = f_1 + \dot J_0dt,
    \end{align*}
where $f_1 + \dot J_0dt \in H^k(M; T^* M \otimes \mathfrak g)$.
\end{proof}

\subsection{Proof of Proposition \ref{prop_direct}}

To simplify the notation in the proof, we write $H^k(M)$ also for Sobolev spaces of vector valued functions.
As $k \ge 4$, the Sobolev embedding theorem implies that
both $H^k(M)$ and $H^{k+1}(M)$ are Banach algebras, and also that
$H^{k+1}(M)$ embeds in $C^2(M)$.

We define
    \begin{align*}
Pu &=
\begin{pmatrix}
\Box_A W + \star[W, \star F_A] - J_0dt
\\
\p_t J_0 + [A_0, J_0]
\end{pmatrix},
\\
\mathcal K(u, J') &=
\begin{pmatrix}
-\mathcal N(W) + J_j dx^j
\\
-[W_0, J_0] - \p^j J_j + [A^j, J_j] + [W^j, J_j]
\end{pmatrix},
    \end{align*}
where $u = (W, J_0)$, $J' = (J_1, J_2, J_3)$ and $j=1,2,3$.
Then (\ref{YM_Lorenz_with_cc}) is equivalent to
    \begin{align}\label{YM_Lorenz_with_cc2}
\begin{cases}
P u = \mathcal K(u, J'), & t \ge 0,
\\
u = 0, & t \le 0.
\end{cases}
    \end{align}

Consider the map $\Phi(u, J') = u - \mathcal S \mathcal K(u, J')$ where $\mathcal S$ is as in (\ref{S_cont}).
Observe that if $\Phi(u, J') = 0$ then $u = \mathcal S \mathcal K(u, J')$ solves (\ref{YM_Lorenz_with_cc2}).
Let us show that
    \begin{align}\label{Phi_cont}
\Phi : H^{k+1}(M) \times H^{k+2}(M)
\to H^{k+1}(M).
    \end{align}
We have $\mathcal N(W) \in H^k(M)$ since $W$, the first component of $u$, is in $H^{k+1}(M)$ and since $H^k(M)$ is a Banach algebra.
Therefore the first component of $\mathcal K(u, J')$
is in $H^k(M)$. Similarly, using the fact that $H^{k+1}(M)$ is a Banach algebra, we have that the second component of $\mathcal K(u, J')$ is in $H^{k+1}(M)$. The regularity (\ref{Phi_cont}) follows then from (\ref{S_cont}).

The map $\Phi$ is a third order polynomial, and therefore it is smooth. Moreover, $\mathcal K(u, 0)$ contains only monomials of order two and three, and it follows that $\p_u \Phi(0,0) = \id$. The implicit function theorem gives a neighbourhood $\mathcal H$ of the zero function in
$H^{k+2}(M)$ and a smooth map $J' \mapsto u$ from $\mathcal U$ to $H^{k+1}(M)$ such that $\Phi(u(J'), J') = 0$ for all $J' \in \mathcal H$.

\section{Generation of \texorpdfstring{$\su(n)$}{the Lie algebra of the special unitary group} using nested commutators}
\label{sec_sun}

We recall the definition of generalized Gell-Mann matrices.
Denote by $E_{jk}$ the matrix with $1$ in the $jk$-th entry and $0$ elsewhere.
The three types of generalized Gell-Mann matrices in $\C^{n \times n}$ are as follows
\begin{itemize}
\item[] symmetric type: for $1 \le j < k \le n$ let $S_{jk} = E_{jk} + E_{kj}$.
\item[] antisymmetric type: for $1 \le j < k \le n$ let $A_{jk} = -i E_{jk} + i E_{kj}$.
\item[] diagonal type: for $1 \le l \le n - 1$ let $D_l$ be the matrix with $1$ in the $jj$-th entry for $1 \le j \le l$, $-l$ in the $jj$-th entry with $j=l+1$, and $0$ elsewhere.
\end{itemize}
The diagonal type matrices $D_l$ are typically normalized by multiplying them with $\sqrt{\frac{2}{l (l+1)}}$ but this is irrelevant for our purposes. A basis of $\su(n)$ is given by the matrices $i S_{jk}$, $i A_{jk}$ and $i D_l$.

In the case $n=2$, we obtain the Pauli matrices
$$
  S_{12} =
    \begin{pmatrix}
      0&1\\
      1&0
    \end{pmatrix}, \quad
  A_{12} =
    \begin{pmatrix}
      0&-i\\
      i&0
    \end{pmatrix}, \quad
  D_1 =
    \begin{pmatrix}
      1&0\\
      0&-1
    \end{pmatrix}.
$$

We define the nested commutator
$$
c(A,B) = [A,[A,B]].
$$
\begin{lemma}\label{lem_span_sun}
$\su(n)$ with $n \ge 2$ is the linear span of the set
$$
\{ c(A,B) : A,B \in \su(n) \}.
$$
\end{lemma}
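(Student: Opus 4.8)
The plan is to reduce the claim to a concrete basis computation. Recall that a basis of $\su(n)$ consists of the generalized Gell-Mann matrices $iS_{jk}$, $iA_{jk}$ ($1 \le j < k \le n$) and $iD_l$ ($1 \le l \le n-1$), so it suffices to exhibit each of these $n^2-1$ basis elements as a linear combination of nested commutators $c(A,B) = [A,[A,B]]$. Since the set $\{c(A,B)\}$ is not a priori a subspace, I will instead show that each basis element lies in the linear span of this set; in practice each will be realized either as a single $c(A,B)$ or as a difference $c(A,B) - c(A',B')$ using the polarization-type identity $c(A+A',B) + c(A-A',B) = 2c(A,B) + 2c(A',B) + \text{(cross terms)}$, though it is cleaner to just compute directly.

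First I would handle the $n=2$ case as a warm-up and as the engine for the general case: with the Pauli matrices one checks $[S_{12},[S_{12},A_{12}]] = $ (a nonzero multiple of) $A_{12}$, and by permuting the roles one gets $S_{12}$ and $D_1$ as well, so $\su(2)$ is spanned by nested commutators. This is exactly the statement already invoked via the Pauli commutation relations in Section~\ref{sec_trivial_centre}. Next, for general $n$, I would exploit the many $\su(2)$-triples sitting inside $\su(n)$: for each pair $j<k$ the matrices $iS_{jk}$, $iA_{jk}$ and $i(E_{jj}-E_{kk})$ span a copy of $\su(2)$, and the $n=2$ computation applied inside this triple yields $iS_{jk}$, $iA_{jk}$ and $i(E_{jj}-E_{kk})$ as nested commutators of elements of $\su(n)$. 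This immediately gives all the symmetric and antisymmetric basis elements, and the elements $i(E_{jj}-E_{kk})$ span the full diagonal Cartan subalgebra, hence in particular give every $iD_l$ as a linear combination. Therefore every basis vector of $\su(n)$ is in the linear span of $\{c(A,B)\}$.

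The only genuine point requiring care — and the step I expect to be the main obstacle — is that $c(A,B)=[A,[A,B]]$ is quadratic in $A$, so the span of the $c(A,B)$ is not obviously closed under the manipulations one wants; I must produce, for each target basis vector, an honest finite linear combination $\sum_i \lambda_i\, c(A_i,B_i)$ equal to it. The resolution is simply that the $\su(2)$-computation does exactly this: inside a fixed $\su(2)$-triple the relevant nested commutator $[X,[X,Y]]$ equals $4Y$ (up to normalization) for the standard generators, so each basis vector is obtained as a single scalar multiple of one $c(A,B)$, with no cross terms to cancel. One should double-check the normalization constants (the factor coming from $[\sigma_i,\sigma_j] = 2i\epsilon_{ijk}\sigma_k$ gives $[\sigma_i,[\sigma_i,\sigma_j]] = 4\sigma_j$ for $i \ne j$), but this is routine. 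An alternative, fully self-contained route avoiding the $\su(2)$-embedding language is to verify directly that $c(iS_{jk}, iA_{jk})$, $c(iA_{jk}, i(E_{jj}-E_{kk}))$, and $c(i(E_{jj}-E_{kk}), iS_{jk})$ are nonzero multiples of $iA_{jk}$, $i(E_{jj}-E_{kk})$, $iS_{jk}$ respectively, which is a short matrix-entry computation; I would include this explicit version since it makes the lemma genuinely elementary as promised in the text.
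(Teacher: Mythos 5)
Your proof is correct and is genuinely a variant of the paper's argument: both rest on the Pauli computation $[\sigma_a,[\sigma_a,\sigma_b]]=4\sigma_b$ applied inside the $\su(2)$-triples $\{iS_{jk},\,iA_{jk},\,i(E_{jj}-E_{kk})\}$, and indeed your cross-check of the commutators $[S_{jk},A_{jk}]=2i(E_{jj}-E_{kk})$, $[A_{jk},E_{jj}-E_{kk}]=2iS_{jk}$, $[E_{jj}-E_{kk},S_{jk}]=2iA_{jk}$ confirms each $\su(2)$-triple works verbatim. Where you diverge from the paper is in handling the Cartan part: the paper obtains only $D_1=4c(S_{12},D_1)$ directly and then runs an explicit induction on $l$, computing $c(A_{l,l+1},D_{l-1})$ to write $D_l=D_{l-1}-\frac{l}{2(l-1)}\,c(A_{l,l+1},D_{l-1})$. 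You instead obtain each $i(E_{jj}-E_{kk})$ as a single nested commutator $c(iS_{jk},\,i(E_{jj}-E_{kk}))$ and then observe that these already span the full Cartan subalgebra, so every $iD_l$ is a linear combination of them. This sidesteps the induction and treats all three Gell-Mann families uniformly, which is cleaner; the paper's route has the (minor) virtue of producing each $D_l$ as an explicit two-term combination and of only invoking the $\su(2)$-triples with $k=j+1$. One small point to make explicit if you write this up: $c(iA,iB)=-i\,c(A,B)$, so for instance $iA_{jk}=-\tfrac14\,c(iS_{jk},iA_{jk})$; the nonzero scalar is all that matters, but being honest about it keeps everything inside $\su(n)$ as required.
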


Before giving the general proof, let us consider the case of $\su(2)$.
A straightforward computation shows that
$$
S_{12} = 4 c(A_{12},S_{12}), \quad A_{12} = 4 c(S_{12},A_{12}),
\quad
D_1 = 4 c(S_{12},D_1).
$$
Therefore the lemma holds in the case $n=2$.

\begin{proof}
The computation in the case $n=2$ generalizes immediately to
$$
S_{jk} = 4 c(A_{jk},S_{jk}), \quad A_{jk} = 4 c(S_{jk},A_{jk}).
$$
Also $D_1 = 4 c(S_{12},D_1)$. We will show using an induction that $D_l$ can be expressed as a linear combination of the nested commutators.
Denote the upper left $m \times m$ block of a matrix $A$ by $A|^m$ and the lower right $m \times m$ block by $A|_m$.
Then
$$
A_{23}|^3 = \begin{pmatrix}
 0 & 0 & 0 \\
 0 & 0 & -i \\
 0 & i & 0 \\
\end{pmatrix}, \quad
D_1|^3 = \begin{pmatrix}
 1 & 0 & 0 \\
 0 & -1 & 0 \\
 0 & 0 & 0 \\
\end{pmatrix},
$$
and the rest of the entries of $A_{23}$ and $D_1$ are zero. Therefore
$$
c(A_{23},D_1)|^3 =
\begin{pmatrix}
 0 & 0 & 0 \\
 0 & -2 & 0 \\
 0 & 0 & 2 \\
\end{pmatrix},
$$
with the rest of the entries zero.
It follows that
$$
D_2 = D_1 - c(A_{23},D_1) = \frac 1 4 c(S_{12},D_1) - c(A_{23},D_1).
$$

Analogously,
$$
A_{l,l+1}|^{l+1}|_2 = \begin{pmatrix}
0 & -i \\
i & 0 \\
\end{pmatrix}, \quad
D_{l-1}|^{l+1}|_2 = \begin{pmatrix}
 -(l-1) & 0 \\
 0 & 0 \\
\end{pmatrix},
$$
and hence
$$
c(A_{l,l+1},D_{l-1})|^{l+1}|_2 =
2\begin{pmatrix}
-(l-1) & 0 \\
0 & l-1 \\
\end{pmatrix},
$$
with the rest of the entries zero. Therefore
$$
D_l = D_{l-1} - \frac{l}{2(l-1)} c(A_{l,l+1},D_{l-1}).
$$
If $D_{l-1}$ is a linear combination of the nested commutators, then so is $D_l$.
\end{proof}

\end{appendix}

\bibliographystyle{abbrv}
\bibliography{main}

\ifoptionfinal{}{
\listoftodos
}

\end{document}